\newtheorem{theorem}{Theorem}
\newtheorem{corollary}{Corollary}
\newtheorem{proposition}{Proposition}
\newtheorem{lemma}{Lemma}
\newtheorem{definition}{Definition}
\newtheorem{remark}{Remark}
\newtheorem{assumption}{Assumption}
\newcommand{\half}{\frac{1}{2}}
\newcommand{\R}{\mathbb{R}}
\newcommand{\bx}{\mathbf{x}}
\newcommand{\VV}{\mathbf{V}}
\newcommand{\VW}{\mathbf{W}}
\newcommand{\VB}{\mathbf{B}}
\newcommand{\VA}{\mathbf{A}}
\newcommand{\VP}{\mathbf{P}}
\newcommand{\VM}{\mathbf{M}}
\newcommand{\Vg}{\mathbf{g}}
\newcommand{\Vu}{\mathbf{u}}
\newcommand{\Vn}{\mathbf{n}}
\newcommand{\Vx}{\mathbf{x}}
\newcommand{\OT}{\mathsf{T}}
\newcommand{\mtsd}{\mathbb{H}^{+\half}(\Gamma)}
\newcommand{\mtsn}{\mathbb{H}^{-\half}(\Gamma)}
\newcommand{\stsd}{H^{+\half}([\Gamma])} 
\newcommand{\stsn}{H^{-\half}([\Gamma])} 
\newcommand{\jspd}{\widetilde{H}^{+\half}([\Gamma])}
\newcommand{\jspn}{\widetilde{H}^{-\half}([\Gamma])} 
\newcommand{\smtp}{\mathbb{X}(\Gamma)}
\newcommand{\stp}{X(\Gamma)}
\newcommand{\sjp}{\widetilde{X}(\Gamma)} 
\newcommand{\smtd}{\mathbb{Y}(\Gamma)}
\newcommand{\std}{Y(\Gamma)}
\newcommand{\sjd}{\widetilde{Y}(\Gamma)} 
\newcommand{\besmtp}{\mathbb{X}_h(\Gamma)}
\newcommand{\besstp}{X_h(\Gamma)}
\newcommand{\besjp}{\widetilde{X}_h(\Gamma)} 
\newcommand{\besmtd}{\mathbb{Y}_h(\Gamma)}
\newcommand{\besstd}{Y_h(\Gamma)}
\newcommand{\besjd}{\widetilde{Y}_h(\Gamma)}
\newcommand{\grad}{\operatorname{\bf grad}}
\renewcommand{\Re}{\operatorname{Re}}
\newcommand{\OV}{\mathsf{V}}
\newcommand{\OW}{\mathsf{W}}
\newcommand{\OB}{\mathsf{B}}
\newcommand{\OA}{\mathsf{A}}
\newcommand{\OM}{\mathsf{M}}
\newcommand{\OD}{\mathsf{D}}
\newcommand{\OR}{\mathsf{R}}
\newcommand{\meshsymb}{\mathcal{T}}
\newcommand{\ba}{\mathsf{a}}
\newcommand{\bb}{\mathsf{b}}
\newcommand{\bm}{\mathsf{m}}
\newcommand{\inter}{\mathcal{I}}
\begin{document}

\title{Calder\'on Preconditioning for Acoustic Scattering at Multi-Screens}

\author{
Kristof Cools\thanks{Ghent University, Ghent, Belgium, e-mail: kristof.cools@ugent.be }
\and Carolina~Urz\'ua-Torres\thanks{TU Delft, Delft Institute of Applied Mathematics, Delft, Netherlands, e-mail: c.a.urzuatorres@tudelft.nl}
  }
\date{}

\maketitle
 
\begin{abstract}
We propose a preconditioner for the Helmholtz exterior problems on multi-screens. For this, we combine quotient-space BEM and operator preconditioning. For a class of multi-screens (which we dub \emph{type A} multi-screens), we show that this approach leads to block diagonal Calder\'on preconditioners and results in a spectral condition number that grows only logarithmically with $h$, just as in the case of simple screens. Since the resulting scheme contains many more DoFs than strictly required, we also present strategies to remove almost all redundancy without significant loss of effectiveness of the preconditioner. 
We verify these findings by providing representative numerical results.

Further numerical experiments suggest that these results can be extended beyond type A multi-screens and that the numerical method introduced here can be applied to essentially all multi-screens encountered by the practitioner, leading to a significantly reduced simulation cost.
\end{abstract}

% \keywords{Preconditioning, Complex screens, Galerkin Boundary Element Method,
    % Quotient-Space Boundary Element Method}

%%%%%%%%%%%%%%%%%%%%%%%%%%%%%%%%%%%%%%%%%%%%%%%%%%%%%%%%%%%%%%%%%%%%%%%%%%%%%%%	
\section{Introduction} 

We are interested in the scattering of acoustic waves at multi-screens, which 
are geometries composed of essentially two-dimensional piecewise smooth 
surfaces joined together, as shown in Figure~\ref{fig:ms}. Hence, we consider 
the following Dirichlet and Neumann Helmholtz boundary value problems (BVPs) 
in the exterior of the multi-screen $\Gamma\subset\R^3$, with wave number
$\kappa\in\mathbb{C}$, $\Re\kappa\geq 0$, 

\begin{align}
 -\Delta U - \kappa^2 U = 0 \text{ in } \R^3\setminus\overline{\Gamma}, \qquad
 U = g_D  \quad\text{ or } \quad \dfrac{\partial U}{\partial \Vn} = f_N \quad 
 \text{ on }  \Gamma,
\end{align}
plus the \emph{Sommerfield radiation condition}
\begin{equation}
 \lim_{r\to\infty} r \left( \dfrac{\partial U}{\partial r} - i \kappa U \right) 
 = 0, \quad r = \Vert \Vx \Vert,
\end{equation}
where $\Vert \Vx \Vert$ designates the Euclidean norm of a point $\Vx$ in 
$\R^3$, and $g_D$ and $f_N$ are suitable boundary data.

\begin{figure}[!htb]
\centering
\begin{minipage}[c][14em][c]{0.3\textwidth}\centering
\includegraphics[width=0.75\textwidth]{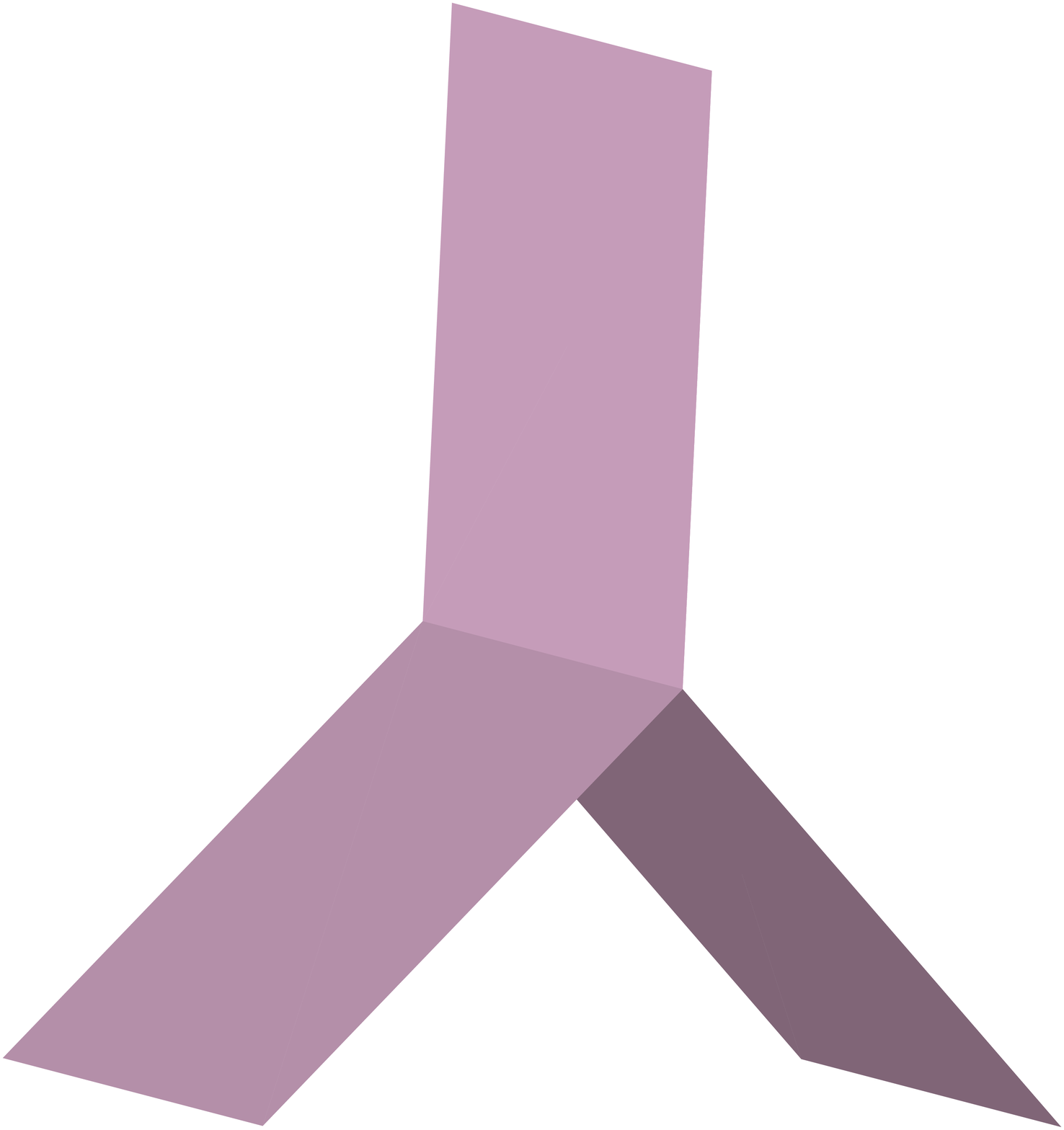}
\end{minipage}
\quad
\begin{minipage}[c][14em][c]{0.3\textwidth}
\includegraphics[width=0.75\textwidth]{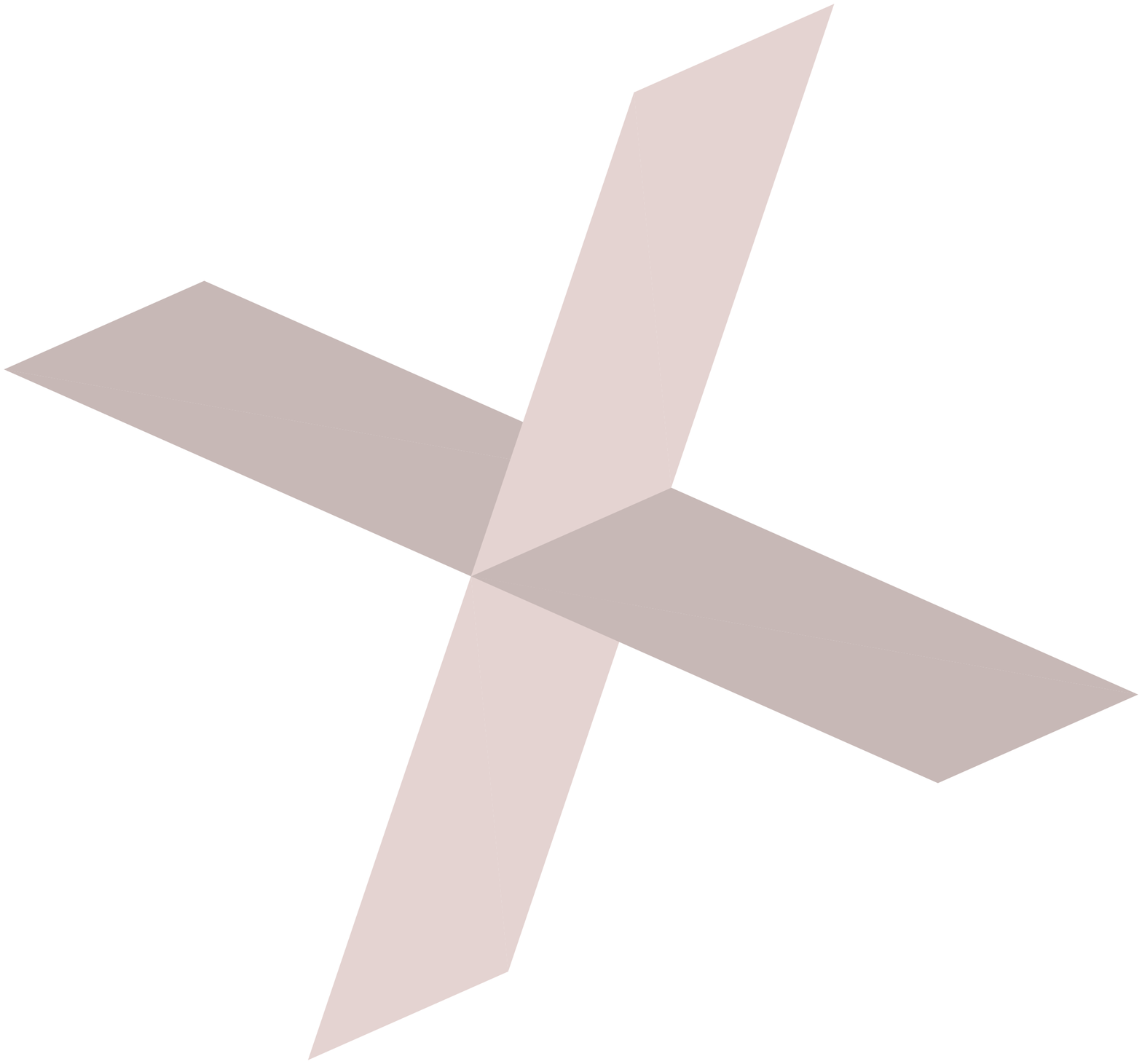}
\end{minipage}%
\caption{Two examples of multi-screen geometries}
\label{fig:ms}
\end{figure}

Our goal is to solve these exterior BVPs efficiently by means of Galerkin 
boundary element methods (BEM) \cite{SaS11} and Calder\'on preconditioning 
\cite{StW98, ChN00}. For this, we recast the BVPs as variational 
\emph{first-kind} boundary integral equations (BIEs) defined for densities on 
the surface of the multi-screen. 

For \emph{simple screens} this approach is well established 
\cite[Section~3.5.3]{SaS11}. Here, we call a simple screen an orientable, 
piecewise smooth two-dimensional manifold with boundary $\mathcal{S}$ embedded in 
$\mathbb{R}^{3}$. For these geometries, the arising variational first-kind BIEs 
are known to be coercive \cite{STE87,ESA90,ERS90} in Sobolev spaces of 
\emph{jumps} of suitable field traces, in $\jspn$ and $\jspd$, 
respectively\cite[Ch.~3]{MCL00}. For these trace spaces, conforming boundary 
element spaces are easily available, and they lead to Galerkin approximations 
and Calder\'on preconditioning whose numerical analysis is well-understood
\cite{McS99, HJU13, HJU20}. 

In contrast, the notion of jumps becomes problematic in multi-screens, since 
they are not globally orientable. For this reason, the tools from simple 
screens cannot be used verbatim on multi-screens. Many alternatives have been 
proposed to tackle this problem. \cite{CTV04, YOS05, Coo12, CoA15a, CoA15b, CoA15c}. It 
is worth pointing out that at the time of writing this article, a rigorous 
analysis of these approaches in suitable trace spaces is not available. 
Furthermore, these approaches lead to ill-conditioned linear systems, yet are 
not amenable to preconditioning. 

Fortunately, recent work by Claeys and Hiptmair offers the mathematical 
framework to overcome these difficulties \cite{ClH13}. The key idea is to see 
trace spaces from the perspective of quotient-spaces and to work with on multi-valued
traces. This new paradigm not only allows for a rigorous analysis, but it also 
paves the way for conforming Galerkin discretization by means of quotient-space 
BEM, as proposed in \cite{CGH21}. 
Indeed, instead of trying to approximate jumps directly, the new approach relies on the Galerkin discretization of multi-trace boundary element spaces. 
With this approach, the related BIEs give rise to Galerkin matrices with large 
null spaces comprised of single-trace functions. Since the right-hand-sides 
of the linear systems of equations are consistent, Krylov subspace iterative 
solvers like GMRES still converge to the right solution. We summarize these 
ideas and results in Section~\ref{sec:QSview}.

Now that the most fundamental issues have been solved, we are in the position to 
investigate how to improve the computational performance of quotient-space BEM 
for multi-screens. 
Indeed, one should note that the arising linear systems are ill-conditioned and 
that the number of GMRES iteration counts increases with mesh refinement. Hence, 
a natural next step -- and the main focus of this paper -- is to devise 
preconditioners for multi-screen problems. In Section~\ref{sec:CaldPreFull}, 
we propose a simple preconditioning strategy based on opposite-order preconditioning, 
also known as Calder\'on preconditioning on closed surfaces. Moreover, we present 
the tools to understand the new preconditioner in the context of operator 
preconditioning. Numerical experiments confirm that this approach reduces considerably 
the number of GMRES iterations required to solve the system.

It is worth mentioning that an advantage of the quotient-space BEM approach is 
that minimal geometrical information is required. However, the disadvantage is 
that one pays with unnecessary computations due to the ``doubling of degrees of
freedom'' underlying the discretization of multi-valued traces. As an 
alternative, we dedicate Section~\ref{sec:CaldPreRed} to discuss reduced 
quotient-space representations that require slightly more geometrical information, 
but minimize computational effort while still rendering efficient Calder\'on preconditioning. Furthermore, we use the tools derived in Section~\ref{sec:CaldPreFull} 
to provide some insight about the requirements that such reductions need to fulfil.

%%%%%%%%%%%%%%%%%%%%%%%%%%%%%%%%%%%%%%%%%%%%%%%%%%%%%%%%%%%%%%%%%%%%%%%%%%%%%%%
\section{Quotient-Space Perspective} 
\label{sec:QSview}
We briefly summarize the new perspective on trace spaces on multi-screens 
introduced in \cite[Section~4-6]{ClH13} and the quotient-space construction of 
boundary element spaces from \cite{CGH21}.

%%%%%%%%%%%%%%%%%%%%%%%%%%%%%%%%%%%%%%%%%%%%%%%%%%%%%%%%%%%
\subsection{Geometry}
We begin by recalling the rigorous characterization of \emph{multi-screens} as 
given in \cite[Section~2]{ClH13}: 

\begin{definition}[Lipschitz Partition {\cite[Definition 2.2]{ClH13}}]
  \label{def:lp}
  A \textit{Lipschitz partition} of $\mathbb{R}^d$, $d=2,3$, is a finite
  collection of Lipschitz open sets $\left( \Omega_j \right)_{j=0\ldots n}$ 
  such that
  $\mathbb{R}^d = \cup_{j=0}^n \overline{\Omega}_j$ and
  $\Omega_j \cap \Omega_k = \emptyset$, if $j \neq k$.
\end{definition}

\begin{definition}[Multi-screen {\cite[Definition 2.3]{ClH13}}]
  \label{def:ms}
  A \textit{multi-screen} is a subset $\Gamma \subset \mathbb{R}^d$ such that 
  there exists a Lipschitz partition of $\mathbb{R}^d$ denoted
  $\left( \Omega_j \right)_{j=0\ldots n}$ satisfying
  $\Gamma \subset \cup_{j=0}^n \partial{\Omega_j}$ and such that for each
  $j = 0 \ldots n$, we have
  $\overline{\Gamma} \cap \partial \Omega_j = \overline{\Gamma} _j $ where
  $\overline{\Gamma} _j \subset \partial \Omega_j$ is some Lipschitz screen 
  in the sense of Buffa-Christiansen \cite[section 1.1]{BuC03}.
\end{definition}

From a numerical point of view, it will be convenient to classify multi-screens
into three categories. For this, we first need to consider the notion of 
irregular points on the boundary, as in \cite{ClH16}. 

\begin{definition}[Irregular points {\cite[Definition 2.3]{ClH16}}]
Let us consider $\partial \Gamma:= \Gamma \setminus \text{int}(\Gamma)$ 
and introduce the set of regular points of the boundary $\mathcal{P}_R(\partial \Gamma)$ defined as 
$$ \mathcal{P}_R(\partial \Gamma) = \lbrace 
x \in \partial \Gamma \text{ such that } B_x \cap \Gamma = B_x \cap S 
\text{ for some ball }B_x \text{ centred at x and some simple Lipschitz screen }
S \rbrace.$$
We define the set of \textbf{irregular points of the boundary} as $$\mathcal{P}_I(\partial \Gamma)= \partial \Gamma\setminus\mathcal{P}_R(\partial \Gamma).$$
\end{definition}

With this, we can classify our multi-screens as follows:
\begin{itemize}
    \item[\textbf{Type A: }] $\Gamma$ is a multi-screen such that irregular points $\mathcal{P}_I(\partial \Gamma)$ are on the boundary of \emph{all} 
    geometries meeting at the junction line(s).
    \item[\textbf{Type B: }] $\Gamma$ is a multi-screen such that irregular points $\mathcal{P}_I(\partial \Gamma)$ may be in the interior of at least
    one of the geometries meeting at the junction line(s).
    \item[\textbf{Type C: }] $\Gamma$ is a multi-screen without irregular points 
    $\mathcal{P}_I(\partial \Gamma)$.
\end{itemize}

\begin{figure}
    \centering
    \begin{subfigure}[b]{0.3\textwidth}
    \includegraphics[width=\textwidth]{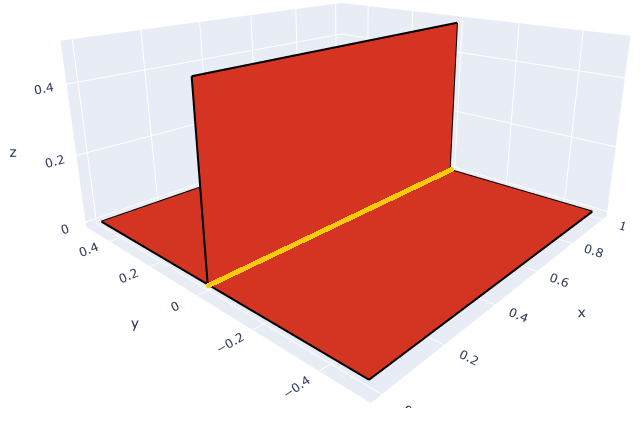}
    \caption{Type A}
    \end{subfigure}
      \begin{subfigure}[b]{0.3\textwidth}
    \includegraphics[width=\textwidth]{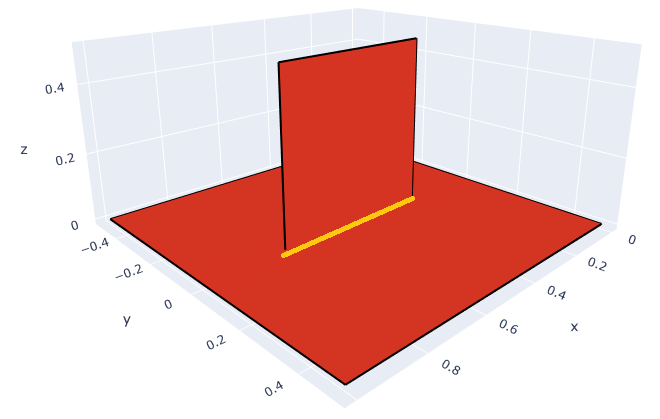}
        \caption{Type B}
        \end{subfigure}
    \begin{subfigure}[b]{0.3\textwidth}
        \includegraphics[width=\textwidth]{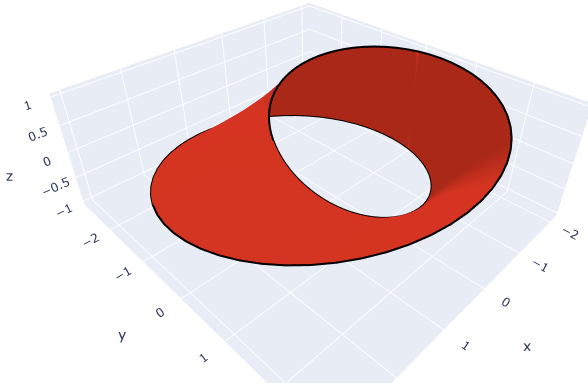}
        \caption{Type C}
    \end{subfigure}
    \caption{Multi-screens can be classified according to the location of their irregular points.}
    \label{fig:geo_types}
\end{figure}

Figure~\ref{fig:geo_types} provides examples of multi-screens in these three 
different classifications. Multi-screens of Type A and Type B arise from applications that we are interested in and will be the focus of this article.

%%%%%%%%%%%%%%%%%%%%%%%%%%%%%%%%%%%%%%%%%%%%%%%%%%%%%%%%%%%
\subsection{Trace Spaces}
\label{ssec:as}
Given a multi-screen $\Gamma\subset \R^d$ , with $d=2,3$, we consider the 
following chains of nested Sobolev spaces\footnote{We refer to \cite[Section~1.1]{GIR86} 
for definitions of the relevant Sobolev spaces.}
\begin{subequations}
  \label{eq:c}
  \begin{gather}
    \label{eq:H1c}
    H^1_{0,\Gamma}(\mathbb{R}^{d})\subset H^1(\mathbb{R}^{d})\subset
    H^1(\mathbb{R}^{d}\backslash\overline{\Gamma}),\\
    \label{eq:Hdivc}
    \mathbf{H}_{0,\Gamma}(\mathrm{div},\mathbb{R}^{d})\subset
    \mathbf{H}(\mathrm{div},\mathbb{R}^{d})\subset
    \mathbf{H}(\mathrm{div},\mathbb{R}^{d}\setminus\overline{\Gamma}),
  \end{gather}
\end{subequations}
where a subscript $X_{0,\Gamma}$ indicates a space obtained as the closure 
in $X$ of smooth functions/vectorfields compactly supported in $\mathbb{R
}^{d}\setminus\overline{\Gamma}$. All inclusions in \eqref{eq:c} define 
closed subspaces, which describe the associated quotient-spaces Hilbert 
spaces. With this, we can define the \textbf{multi-trace spaces} \cite[Section~5]{ClH13}
\begin{subequations}
\label{eq:multi}
\begin{align}
\label{multi1}
\mtsd & := H^1(\mathbb{R}^d\backslash \overline{\Gamma})/
H^1_{0,\Gamma}(\mathbb{R}^d),\\\label{mult2}
\mtsn & := \mathbf{H}(\mathrm{div},\mathbb{R}^d\backslash\overline{\Gamma})/
\mathbf{H}_{0,\Gamma}(\mathrm{div},\mathbb{R}^d).
\end{align}
\end{subequations}
and the \textbf{single-trace spaces}
\cite[Section~6.1]{ClH13}
\begin{subequations}
  \label{eq:sing}
\begin{align}
\label{sing1}
\stsd & := H^1(\mathbb{R}^d)/H^1_{0,\Gamma}(\mathbb{R}^d),\\
\label{eq: sing2}
\stsn & := \mathbf{H}(\mathrm{div},\mathbb{R}^d)/
\mathbf{H}_{0,\Gamma}(\mathrm{div},\mathbb{R}^d).
\end{align}
\end{subequations}

Since the spaces $\stsd$ and $\stsn$ are 
closed subspaces of $\mtsd$ and $\mathbb{H}^{-\half}
(\Gamma)$, respectively \cite[Proposition 6.2]{ClH13}, we can also 
introduce the \textbf{jump spaces} \cite[Section~6.2]{ClH13} as
\begin{gather}
  \label{eq:jumpsp}
  \jspd := \mtsd/\stsd \quad \text{and} \quad \jspn := \mtsn/\stsn.
\end{gather}

\begin{remark}
\label{rem:read}
We note that $H^1(\mathbb{R}^d\backslash \overline{\Gamma})$ and $\mathbf{H}(\mathrm{div},\mathbb{R}^{d}\setminus\overline{\Gamma})$ 
are spaces of functions attaining different values on both sides of $\Gamma$. 
This implies that functions in the multi-trace spaces $\mtsd$ and $\mtsn$ are multi-valued 
on $\Gamma$. In other words, they can take different values on both sides of 
$\Gamma$. One way to grasp this is to imagine an ``infinitesimally inflated'' 
screen, as illustrated in Figure~\ref{fig:2dms} for a 2D multi-screen. With 
this, one can intuitively understand the trace spaces introduced above as follows:
  
\begin{itemize}
\item $\mtsd$ can be seen as a standard Dirichlet trace space on the surface of 
the inflated screen. Similarly, $\mtsn$ can be viewed as the standard space of
Neumann trace space on the surface of the inflated screen.

\item The single-trace space $\stsd$ simply consists of single-valued functions 
on $\Gamma$. One can follow the same intuition for $\stsn$, however, its right 
interpretation as a single-valued normal component requires that one fixes a 
local normal $\mathbf{n}$ on $\Gamma$. 
\end{itemize}

\begin{figure}[!htb]
\centering
\psfrag{G}{$\Gamma$}
\includegraphics[width=0.6\textwidth]{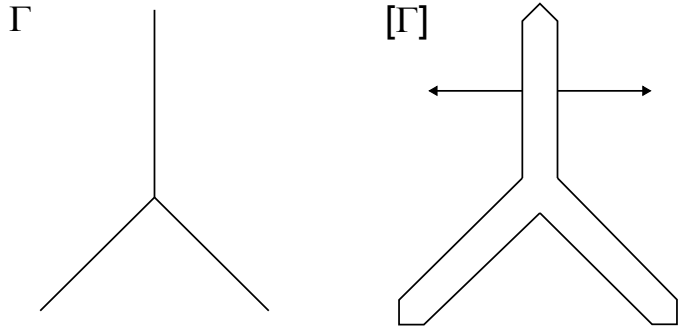}
\caption{Inflating a 2D multi-screen.}
\label{fig:2dms}
\end{figure}

\end{remark}

Next, we consider the canonical surjections
\begin{gather}
\label{eq:cansur}
\pi_D: H^1(\mathbb{R}^d\backslash \overline{\Gamma}) \rightarrow \mtsd \quad
\mathrm{and} \quad \pi_N :  \mathbf{H}(\mathrm{div},
\mathbb{R}^d\backslash \overline{\Gamma}) \rightarrow \mtsn,
\end{gather}
and $H^{1}(\Delta,\mathbb{R}^{d}\setminus\Gamma) := \{v\in
  H^{1}(\mathbb{R}^{d}\setminus\Gamma),\,\Delta v\in L^{2}(\mathbb{R}^{d}
  \setminus\Gamma)\}$.
With this, we are in the position to introduce the relevant trace operators
\begin{align*}
  \text{Dirichlet trace:}\quad & & \gamma_{D}:H^{1}(\mathbb{R}^{d}\setminus
  \Gamma)\to  \mtsd\;, && \gamma_{D} & := \pi_{D} \;,\\
  \text{Neumann trace:}\quad & & \gamma_{N}:H^{1}(\Delta,\mathbb{R}^{d}
  \setminus\Gamma)\to  \mtsn\;, & &\gamma_{N} & := \pi_{N}\circ \grad \;.
\end{align*}
Moreover, we remark that they map onto $\stsd$ and $\stsn$ when restricted to 
${H^1(\mathbb{R}^{d})}$ and ${\mathbf{H}(\mathrm{div},\mathbb{R}^{d})}$, 
respectively.

Finally we introduce a bilinear pairing on
$\mtsd\times \mtsn$:
\begin{gather}
  \label{eq:llgg}
  \ll {u} , {p} \gg := \int\nolimits_{[\Gamma]} {u}{p}\ d\sigma 
  := \int_{\mathbb{R}^d \backslash \overline{\Gamma}} \mathbf{p} \cdot \nabla u 
  + u \mathrm{div}(\mathbf{p}) \ d\bx,
\end{gather}
with $u\in H^1(\R^d\setminus\overline{\Gamma})$ and $\mathbf{p} \in \mathbf{H
  }(\mathrm{div}, \R^d\setminus\overline{\Gamma})$ \cite[Section~5.1]{ClH13}.
Note that this pairing induces the following \emph{isometric dualities} 
\cite[Prop.~5.1 and Section~6.2]{ClH13}
\begin{align}
 \mtsn  \cong \left(\mtsd\right)^\prime, \quad \jspn  \cong \left(\stsd\right)^\prime, 
 \quad \jspd  \cong \left(\stsn\right)^\prime.
\end{align}

The bilinear pairing also offers a characterization of single-trace spaces 
through self-polarity:
\begin{proposition}[{\cite[Proposition 6.3]{ClH13}}]
  \label{prop:polar}
  For ${u} \in \mtsd$ and ${p} \in \mathbb{H}^{
  -\half}(\Gamma)$ the following equivalences hold true:
  \begin{eqnarray*}
& {u} \in \stsd\quad \Longleftrightarrow \quad
  \int_{[\Gamma]}  {u} {q} \ d\sigma = 0 \quad \forall {q}\in 
  \stsn,\\
& {p} \in \stsn \quad \Longleftrightarrow \quad
  \int_{[\Gamma]}  {v} {p} \ d\sigma = 0 \quad \forall {v}\in 
  \stsd.
  \end{eqnarray*}
\end{proposition}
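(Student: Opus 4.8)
My plan is to read the two equivalences as the single assertion that $\stsd$ and $\stsn$ are \emph{mutually polar} closed subspaces under the duality pairing $\ll\cdot,\cdot\gg$ of~\eqref{eq:llgg}. Writing $(\stsd)^{\circ}:=\{p\in\mtsn:\ll v,p\gg=0\ \forall v\in\stsd\}$ and, dually, ${}^{\circ}\stsn:=\{u\in\mtsd:\ll u,q\gg=0\ \forall q\in\stsn\}$, I must establish $\stsd={}^{\circ}\stsn$ and $\stsn=(\stsd)^{\circ}$. Because the pairing identifies $\mtsn$ isometrically with the dual of the Hilbert space $\mtsd$~\cite[Prop.~5.1]{ClH13}, and $\stsd$, $\stsn$ are closed subspaces of $\mtsd$, $\mtsn$~\cite[Prop.~6.2]{ClH13}, the bipolar theorem in this reflexive setting shows that either identity follows from the other; nonetheless I would prove both directly, each as a pair of inclusions, since the arguments are symmetric. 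Throughout I would use that $\overline{\Gamma}$, being a finite union of Lipschitz screens, is Lebesgue-null in $\R^{d}$, so that $L^{2}(\R^{d}\setminus\overline{\Gamma})=L^{2}(\R^{d})$ and any representative drawn from $H^{1}(\R^{d}\setminus\overline{\Gamma})$ or $\mathbf{H}(\mathrm{div},\R^{d}\setminus\overline{\Gamma})$ may be regarded canonically as a function, respectively field, on all of $\R^{d}$ carrying its piecewise weak derivatives.

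For the ``only if'' inclusions $\stsd\subseteq{}^{\circ}\stsn$ and $\stsn\subseteq(\stsd)^{\circ}$ I would take $u\in\stsd$ and $q\in\stsn$, choose \emph{single-valued} representatives $\tilde u\in H^{1}(\R^{d})$ and $\tilde{\mathbf q}\in\mathbf{H}(\mathrm{div},\R^{d})$ (available by the definition~\eqref{eq:sing}), insert them into~\eqref{eq:llgg}, and extend the integral over the null set $\overline{\Gamma}$:
\[
 \ll u,q\gg=\int_{\R^{d}}\bigl(\tilde{\mathbf q}\cdot\nabla\tilde u+\tilde u\,\mathrm{div}\,\tilde{\mathbf q}\bigr)\,d\bx=\int_{\R^{d}}\mathrm{div}\bigl(\tilde u\,\tilde{\mathbf q}\bigr)\,d\bx=0 .
\]
The final equality holds because $\tilde u\,\tilde{\mathbf q}\in L^{1}(\R^{d})^{d}$ has distributional divergence $\nabla\tilde u\cdot\tilde{\mathbf q}+\tilde u\,\mathrm{div}\,\tilde{\mathbf q}\in L^{1}(\R^{d})$, and the integral over $\R^{d}$ of the divergence of such a field vanishes (a cutoff argument using integrability of the field). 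This single computation yields both inclusions simultaneously.

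For the ``if'' inclusions ${}^{\circ}\stsn\subseteq\stsd$ and $(\stsd)^{\circ}\subseteq\stsn$ I would test against smooth compactly supported fields. Given $u\in\mtsd$ with $\ll u,q\gg=0$ for all $q\in\stsn$, I fix \emph{any} representative $\tilde u\in H^{1}(\R^{d}\setminus\overline{\Gamma})$, viewed in $L^{2}(\R^{d})$ with piecewise gradient $\nabla\tilde u\in L^{2}(\R^{d})^{d}$; for $\boldsymbol{\varphi}\in C_{c}^{\infty}(\R^{d})^{d}$ one has $\boldsymbol{\varphi}\in\mathbf{H}(\mathrm{div},\R^{d})$, hence $\pi_{N}\boldsymbol{\varphi}\in\stsn$, and~\eqref{eq:llgg} gives
\[
 0=\ll u,\pi_{N}\boldsymbol{\varphi}\gg=\int_{\R^{d}}\boldsymbol{\varphi}\cdot\nabla\tilde u\,d\bx+\int_{\R^{d}}\tilde u\,\mathrm{div}\,\boldsymbol{\varphi}\,d\bx=\int_{\R^{d}}\boldsymbol{\varphi}\cdot\nabla\tilde u\,d\bx-\langle\mathbf{g},\boldsymbol{\varphi}\rangle ,
\]
where $\mathbf{g}\in\mathcal{D}'(\R^{d})^{d}$ is the distributional gradient of $\tilde u$ on $\R^{d}$. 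Letting $\boldsymbol{\varphi}$ range over $C_{c}^{\infty}(\R^{d})^{d}$ forces $\mathbf{g}=\nabla\tilde u\in L^{2}(\R^{d})^{d}$, i.e.\ $\tilde u\in H^{1}(\R^{d})$, whence $u=\pi_{D}\tilde u\in\stsd$. I would obtain $(\stsd)^{\circ}\subseteq\stsn$ by the mirror argument: for $p=\pi_{N}\mathbf{p}$ with $\ll v,p\gg=0$ for all $v\in\stsd$, testing against $\pi_{D}\psi$ for $\psi\in C_{c}^{\infty}(\R^{d})$ identifies the distributional divergence of $\mathbf{p}$ on $\R^{d}$ with its piecewise divergence, so $\mathbf{p}\in\mathbf{H}(\mathrm{div},\R^{d})$ and $p\in\stsn$.

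I expect the main obstacle to be conceptual rather than computational: keeping the quotient bookkeeping honest --- so that ``$u\in\stsd$'' genuinely means that the class $u\in\mtsd$ admits a representative in the \emph{global} space $H^{1}(\R^{d})$, which is only meaningful because $\stsd$ sits in $\mtsd$ as a closed subspace --- and, above all, observing that pairing against the images under $\pi_{D}$, $\pi_{N}$ of single-valued, smooth, compactly supported functions already annihilates the \emph{entire} $\overline{\Gamma}$-supported singular part of $\nabla\tilde u$ (respectively $\mathrm{div}\,\mathbf{p}$), which is exactly the obstruction to membership in the global Sobolev space. A lesser, purely technical point is the global integration by parts in the ``only if'' halves, where the representatives are not compactly supported; a cutoff or a density argument settles it. Both points rely solely on the facts recalled above from~\cite{ClH13}: the isometry of the pairing and the closedness of $\stsd$, $\stsn$ inside $\mtsd$, $\mtsn$.
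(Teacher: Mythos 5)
Your argument is correct. Note that the paper itself does not prove this statement: it is quoted verbatim from Claeys--Hiptmair \cite[Proposition~6.3]{ClH13}, and your proof essentially reconstructs the argument used there --- the ``only if'' half via the product rule and a cutoff to kill $\int_{\R^d}\mathrm{div}(\tilde u\,\tilde{\mathbf q})\,d\bx$ for single-valued representatives, and the ``if'' half by testing against $\pi_N\boldsymbol{\varphi}$, $\pi_D\psi$ with smooth compactly supported $\boldsymbol{\varphi},\psi$ to identify the global distributional gradient (resp.\ divergence) with the piecewise one, which is exactly the membership criterion for $H^1(\R^d)$ (resp.\ $\mathbf{H}(\mathrm{div},\R^d)$). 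The delicate points --- well-definedness of the pairing on the quotients, closedness of the single-trace spaces, $\overline{\Gamma}$ being Lebesgue-null, and the $L^1$ cutoff argument --- are all correctly invoked or justified, so I see no gap.
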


%%%%%%%%%%%%%%%%%%%%%%%%%%%%%%%%%%%%%%%%%%%%%%%%%%%%%%%%%%%
\subsection{Weakly Singular and Hypersingular BIEs}
\label{ssec:sbie}

Let 
$$\mathcal{G}_\kappa(\mathbf{z}) := \dfrac{\exp(\imath\kappa\|\mathbf{z}\|)}
{4 \pi\|\mathbf{z}\|}$$ 
be the radiating fundamental solution of the Helmholtz
equation in $\mathbb{R}^{3}$.
The \emph{weakly singular} boundary 
integral operator (BIO) can be stated in integral form as
\begin{gather}
\label{eq:Vf}
(\OV_{\kappa} {{\phi}})(\mathbf{x}) = \int_{[\Gamma]} \mathcal{G}_{\kappa}
(\mathbf{x}-\mathbf{y}) {{\phi}}(\mathbf{y})\, d\sigma(\mathbf{y})\;,
\quad {{\phi}}\in \mtsn\cap \mathbb{L}^{\infty}(\Gamma),
\end{gather}
where integration is carried out over the virtual inflated screen, \emph{cf.} 
Figure~\ref{fig:2dms}, and $\mathbb{L}^{\infty}(\Gamma)$ is understood as the usual $L^\infty$ space but over the virtual inflated screen.

In order to solve the Dirichlet Helmholtz BVP, we solve the BIE given by
\begin{equation}
  \label{eq:BIEV}
  {{\phi}}\in\mtsn:\quad \OV_{\kappa} ({{\phi}}) = {{g}}_{D},
\end{equation}
which can be written in equivalent variational form as follows:
\begin{equation}
\label{eq:varV}
\mathrm{Find \ } {{\phi}} \in \mtsn \ \mathrm{such \ that} \quad 
\ll \OV_{\kappa} {{\phi}},  {{\psi}} \gg = \ll {{g}_D}, 
{{\psi}} \gg \quad \forall {{\psi}} \in \mtsn.  
\end{equation}

Similarly, solving the Neumann Helmholtz BVP is equivalent to solving the BIE
\begin{gather}
  \label{eq:BIEW}
  {{v}}\in\mtsd:\quad \OW_{\kappa}({{v}}) = {{f}}_{N}.
\end{gather}
Also this BIE can be cast in variational form and this results in the problem: 
\begin{equation}\label{eq:varW} 
\mathrm{Find \ } {{v}} \in  \mtsd \ \mathrm{such \ that} \quad 
\ll  \OW_{\kappa} {{v}}, {{p}} \gg = \ll {{f}_N}, {{p}} \gg 
\quad \forall {{p}} \in \mtsd. 
\end{equation}
 
As shown in \cite[Section~3.3]{SaS11}, the bilinear form on the left-hand side 
of \eqref{eq:varW} can be conveniently expressed by integration by parts over 
the virtual inflated screen for sufficiently regular arguments:
\begin{gather}
  \label{eq:ibp3DW}
  \begin{aligned}
    \ll \OW_{\kappa} {{v}}, {{p}} \gg = \int_{[\Gamma]} \int_{[\Gamma]}
    G_{\kappa}(\mathbf{y}-\mathbf{x}) \Bigl\{&
    (\grad_\Gamma {{v}} \times
    \mathbf{n})(\mathbf{y}) \cdot (\grad_\Gamma {{p}} \times
    \mathbf{n})(\mathbf{x}) \\
    & -\kappa^{2}\mathbf{n}(\mathbf{y})\cdot\mathbf{n}(\mathbf{x})
    {{v}}(\mathbf{y}) {{p}}(\mathbf{x}) \Bigr\}\, d\sigma(\mathbf{y})d\sigma(\mathbf{x}).
  \end{aligned}
\end{gather}

We conclude this section by reminding the reader of some properties of these 
BIEs:
\begin{proposition}[{ \cite[Prop.~8.8]{ClH13}}]
\label{prop:GardIneq}
There exist compact operators
$\mathsf{K}_V:\jspn \rightarrow \stsd$ 
and
$\mathsf{K}_W:\jspd\rightarrow \stsn$ 
such that the following G\aa{}rding inequalities are satisfied

\begin{align}
\label{eq:giv}
\mathrm{Re} \left\lbrace \int_{[\Gamma]} {{q}}(\OV_{\kappa}+\mathsf{K}_{V}) 
\bar{{{q}}} \
d\sigma \right\rbrace &
\geq C_{\OV} \lVert {{q}} \rVert^2_{\jspn}
\quad \forall {{q}} \in \jspn,\\
\label{eq:giw}
\mathrm{Re} \left\lbrace \int_{[\Gamma]} {{v}}(\OW_{\kappa}+\mathsf{K}_{W}) 
\bar{{{v}}} \
d\sigma \right\rbrace &
\geq C_{\OW} \lVert {{v}} \rVert^2_{\jspd}
\quad \forall {{v}} \in \jspd,
\end{align}
with $C_{\OV}, C_{\OW}>0$ depending only on $\kappa$ and $\Gamma$.
\end{proposition}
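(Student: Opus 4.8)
The plan is to follow the classical two-step argument for G\aa{}rding inequalities of first-kind boundary integral operators, transposed to the quotient-space setting of \cite{ClH13}. First I would isolate the principal (Laplace) parts by writing $\OV_\kappa = \OV_0 + (\OV_\kappa - \OV_0)$ and $\OW_\kappa = \OW_0 + (\OW_\kappa - \OW_0)$, the subscript $0$ denoting the corresponding $\kappa = 0$ operators. The difference kernel $\mathcal{G}_\kappa(\mathbf z) - \mathcal{G}_0(\mathbf z) = (e^{\imath\kappa\|\mathbf z\|}-1)/(4\pi\|\mathbf z\|)$ stays bounded at $\mathbf z = \mathbf 0$ and is analytic away from it, so $\OV_\kappa - \OV_0$ is smoothing and hence compact as a map $\jspn \to \stsd$; for the hypersingular operator one reads off from the regularised form \eqref{eq:ibp3DW} that the $-\kappa^2$ term couples its arguments through multiplication rather than surface derivatives and is therefore of lower order, hence compact $\jspd \to \stsn$, while its leading term differs from the $\kappa = 0$ one only through the smooth kernel $G_\kappa - G_0$ and is likewise compact. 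Setting $\mathsf K_V := \OV_0 - \OV_\kappa$ and $\mathsf K_W := \OW_0 - \OW_\kappa$ (both compact), the task reduces to proving that $\OV_0$ is coercive on $\jspn$ and $\OW_0$ on $\jspd$, up to a possible further compact term.

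For the second step I would establish the energy identities
\begin{align*}
\int_{[\Gamma]} q\,(\OV_0\bar q)\,d\sigma &= \int_{\R^3}\lvert\nabla(\mathcal{SL}_0 q)\rvert^2\,d\bx, \qquad q\in\mtsn,\\
\int_{[\Gamma]} v\,(\OW_0\bar v)\,d\sigma &= \int_{\R^3\setminus\overline\Gamma}\lvert\nabla(\mathcal{DL}_0 v)\rvert^2\,d\bx, \qquad v\in\mtsd,
\end{align*}
where $\mathcal{SL}_0$, $\mathcal{DL}_0$ denote the Laplace single- and double-layer potentials. Each follows from the jump relations of the respective potential together with the definition \eqref{eq:llgg} of the pairing: $\mathcal{SL}_0 q$ is harmonic off $\Gamma$, decays at infinity, and satisfies $\gamma_D\mathcal{SL}_0 q = \OV_0 q$ and $\gamma_N\mathcal{SL}_0 q = q$, while $\mathcal{DL}_0 v$ is harmonic off $\Gamma$, decays, and satisfies $\gamma_N\mathcal{DL}_0 v = \OW_0 v$ with Dirichlet-trace jump $v$, so the divergence term in \eqref{eq:llgg} drops and only the Dirichlet energy survives (single-trace corrections to the jump relations are immaterial since they are annihilated by pairing against $\stsd$). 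Since $\OV_0 q\in\stsd$ and $\OW_0 v\in\stsn$, the self-polarity of single-trace spaces (Proposition~\ref{prop:polar}), combined with the symmetry of $\OV_0$ and $\OW_0$ in the pairing, shows that these quadratic forms vanish whenever $q\in\stsn$, resp. $v\in\stsd$, so they descend to the jump spaces $\jspn = \mtsn/\stsn$, $\jspd = \mtsd/\stsd$; they are moreover manifestly real and non-negative.

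It then remains to bound the potential energies from below by the jump-space norms. For $\OV_0$ I would use that the functional $v\mapsto\ll v, q\gg$ on $\stsd$ factors through $\jspn$ with dual norm exactly $\lVert[q]\rVert_{\jspn}$, by the isometric duality $\jspn\cong(\stsd)'$; writing each $v\in\stsd$ as $\gamma_D w$ with $w\in H^1(\R^3)$ and inserting $\gamma_N\mathcal{SL}_0 q = q$ into \eqref{eq:llgg} gives $\int_{\R^3}\nabla(\mathcal{SL}_0 q)\cdot\nabla w\,d\bx = \ll\gamma_D w, q\gg$, whence
\begin{equation*}
\lvert\ll v, q\gg\rvert = \Bigl\lvert \int_{\R^3}\nabla(\mathcal{SL}_0 q)\cdot\nabla w\,d\bx \Bigr\rvert \le \lVert\nabla(\mathcal{SL}_0 q)\rVert_{L^2(\R^3)}\,\lVert\nabla w\rVert_{L^2(\R^3)};
\end{equation*}
taking the infimum over lifts $w$ and invoking the equivalence, valid in $\R^3$, of $\inf\{\lVert\nabla w\rVert_{L^2(\R^3)}:\gamma_D w = v\}$ with the $\stsd$-norm yields $\lVert[q]\rVert_{\jspn}\lesssim\lVert\nabla(\mathcal{SL}_0 q)\rVert_{L^2(\R^3)}$. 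Combined with the energy identity this is exactly \eqref{eq:giv}, after a density argument from $\mathbb{L}^\infty$ densities. The hypersingular estimate \eqref{eq:giw} should follow the same pattern via $\jspd\cong(\stsn)'$, lifting $p\in\stsn$ to $\mathbf q\in\mathbf H(\mathrm{div},\R^3)$ and testing the double-layer energy against $\mathbf q$; alternatively, \eqref{eq:ibp3DW} exhibits $\ll\OW_0 v, p\gg$ as a vector weakly singular form in $\grad_\Gamma v\times\mathbf n$ and $\grad_\Gamma p\times\mathbf n$, so the bound reduces to the coercivity of the vector single-layer operator plus a stability estimate for $v\mapsto\grad_\Gamma v\times\mathbf n$ on $\jspd$.

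The hard part, I expect, is this last step: faithfully matching the abstract quotient- and jump-space norms on $\jspn$, $\jspd$ with the concrete Dirichlet energies of the layer potentials, on a geometry that is only Lipschitz and --- unlike a simple screen --- non-orientable with junction lines. This rests on the precise mapping and jump properties of $\mathcal{SL}_0$, $\mathcal{DL}_0$ in these quotient scales, for which the ``inflated screen'' construction of \cite{ClH13} and the self-polarity characterisation of single-trace spaces (Proposition~\ref{prop:polar}) supply the essential bookkeeping, together with bounded liftings $\stsd\to H^1(\R^3)$ and $\stsn\to\mathbf H(\mathrm{div},\R^3)$ and the $\R^3$-specific equivalence of the Dirichlet seminorm with the full graph norm (which would need extra weights in two dimensions). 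By contrast, the compactness assertions of the first step are routine once the mapping properties of the potentials are in hand. This is essentially the route carried out in \cite[Prop.~8.8]{ClH13}.
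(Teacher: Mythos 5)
This proposition is not proved in the paper at all: it is quoted verbatim from \cite[Prop.~8.8]{ClH13}, so there is no in-paper argument to compare against. Your sketch — splitting off the $\kappa=0$ operators as compact perturbations, the layer-potential energy identities combined with the self-polarity of single-trace spaces (Proposition~\ref{prop:polar}) to descend to $\jspn$ and $\jspd$, and the duality $\jspn\cong(\stsd)'$, $\jspd\cong(\stsn)'$ with bounded liftings to bound the quotient norms by the potential energies — is essentially the route of the cited proof, and the loose ends you flag (jump relations holding only modulo single traces, the Dirichlet-seminorm versus full-norm issue at infinity) are either harmless under the pairing or absorbable into the compact operators $\mathsf{K}_V$, $\mathsf{K}_W$, as a G\aa{}rding inequality permits.
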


\begin{lemma}[{\cite[Lemma~3.2]{CGH21}}]
\label{lem:kernVW}
The nullspaces of ${\OV_{\kappa}}$ and ${\OW_{\kappa}}$ agree with ${\stsn}$ 
and $\stsd$, respectively.
\end{lemma}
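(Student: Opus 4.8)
The plan is to characterize the nullspaces by combining the G\aa{}rding inequalities from Proposition~\ref{prop:GardIneq} with the self-polarity characterization of single-trace spaces in Proposition~\ref{prop:polar}, and with the mapping properties of the trace operators. Since the argument is symmetric in the weakly singular and hypersingular cases, I would carry it out in detail for $\OV_\kappa$ and indicate the obvious modifications for $\OW_\kappa$.

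First I would show $\stsn \subseteq \nullsp(\OV_\kappa)$. For ${{\phi}}\in\stsn$, pick a representative $\mathbf{p}\in\mathbf{H}(\mathrm{div},\R^d)$ (i.e.\ a representative with \emph{no jump} across $\Gamma$). The potential $\int_{[\Gamma]}\mathcal{G}_\kappa(\mathbf{x}-\mathbf{y}){{\phi}}(\mathbf{y})\,d\sigma(\mathbf{y})$ is then the single-layer potential of a density that, seen on the inflated screen, takes opposite signs on the two sides and hence cancels; equivalently, by Proposition~\ref{prop:polar} one has $\ll \OV_\kappa{{\phi}},{{\psi}}\gg=0$ for all ${{\psi}}\in\stsn$ because the relevant traces of the potential land in $\stsd$, which is polar to $\stsn$. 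One then upgrades this to $\OV_\kappa{{\phi}}=0$ in $\mtsd$ using that the full multi-trace duality $\mtsn\cong(\mtsd)'$ together with the fact that $\OV_\kappa{{\phi}}$ actually lies in the closed subspace $\stsd$, on which testing against $\stsn$ suffices to conclude it vanishes. The cleanest route is: the jump of $\OV_\kappa{{\phi}}$ is zero (single-valued density produces a single-valued potential trace), so $\OV_\kappa{{\phi}}\in\stsd$; and $\ll\OV_\kappa{{\phi}},{{\psi}}\gg=0$ for all ${{\psi}}\in\stsn$ forces $\OV_\kappa{{\phi}}=0$ by the duality $\jspn\cong(\stsd)'$ — wait, more directly, $\stsd$ paired with $\stsn$ is nondegenerate, so $\OV_\kappa{{\phi}}=0$.

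For the reverse inclusion $\nullsp(\OV_\kappa)\subseteq\stsn$, suppose ${{\phi}}\in\mtsn$ with $\OV_\kappa{{\phi}}=0$. Decompose ${{\phi}}={{\phi}}_0+[{{\phi}}]$ along $\mtsn = \stsn \oplus \jspn$ (choosing a complement), where the jump class $[{{\phi}}]\in\jspn$. By the first inclusion $\OV_\kappa{{\phi}}_0=0$, hence $\OV_\kappa[{{\phi}}]=0$ too; then the G\aa{}rding inequality \eqref{eq:giv} gives $0=\Re\ll(\OV_\kappa+\mathsf{K}_V)[{{\phi}}],\overline{[{{\phi}}]}\gg = \Re\ll\mathsf{K}_V[{{\phi}}],\overline{[{{\phi}}]}\gg \ge C_{\OV}\|[{{\phi}}]\|_{\jspn}^2 - (\text{compact remainder})$. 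This alone does not finish; the standard fix is to combine the G\aa{}rding inequality with \emph{injectivity of $\OV_\kappa$ on the jump space} $\jspn$, which in turn follows from a uniqueness argument for the exterior Helmholtz problem: if $\OV_\kappa[{{\phi}}]=0$ then the single-layer potential $\mathcal{SL}_\kappa[{{\phi}}]$ has vanishing Dirichlet trace on $\Gamma$ from both sides, solves the homogeneous interior/exterior Helmholtz BVPs with the radiation condition, hence vanishes identically, so its Neumann jump $[{{\phi}}]$ is zero. Therefore ${{\phi}}={{\phi}}_0\in\stsn$. The $\OW_\kappa$ case is identical with $\jspd$, $\stsd$, the double-layer potential, and the Neumann BVP in place of the Dirichlet one.

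The main obstacle is the reverse inclusion: the G\aa{}rding inequality by itself only yields a Fredholm alternative, so one genuinely needs the potential-theoretic uniqueness argument (vanishing Cauchy data $\Rightarrow$ vanishing potential via Sommerfeld radiation and unique continuation) transported to the multi-screen/inflated-screen setting. Making that argument rigorous on a non-orientable multi-screen — in particular correctly interpreting "the potential has zero trace on both sides" in terms of the multi-trace spaces, and invoking the correct representation formula — is the delicate point; everything else is bookkeeping with the quotient-space dualities. If the cited framework of \cite{ClH13} already supplies the relevant representation formula and jump relations for layer potentials on multi-screens, this step becomes short; otherwise it must be established first.
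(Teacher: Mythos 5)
This lemma is not proved in the paper at all: it is quoted verbatim from \cite{CGH21} (Lemma~3.2), so there is no internal proof to compare against. Your overall strategy is nevertheless the standard one behind the cited result: cancellation of the two sides of the inflated screen gives $\stsn\subseteq\nullsp(\OV_{\kappa})$, and the converse follows from injectivity of $\OV_{\kappa}$ on the jump space, obtained from the representation/jump relations for the radiating single-layer potential and uniqueness for the homogeneous Dirichlet problem in $\mathbb{R}^3\setminus\overline{\Gamma}$ (the machinery of \cite{ClH13}, Section~8, which you correctly flag as the prerequisite).

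There is, however, one genuinely wrong step in your ``cleanest route'' for the first inclusion: you assert that the pairing of $\stsd$ with $\stsn$ is nondegenerate, so that $\ll\OV_{\kappa}\phi,\psi\gg=0$ for all $\psi\in\stsn$ forces $\OV_{\kappa}\phi=0$. Proposition~\ref{prop:polar} says exactly the opposite: $\stsd$ and $\stsn$ are mutually polar, i.e.\ the duality pairing vanishes \emph{identically} on $\stsd\times\stsn$, so testing against $\stsn$ only shows $\OV_{\kappa}\phi\in\stsd$ and can never show it is zero (the relevant dualities are $\mtsn\cong(\mtsd)'$ and $\jspn\cong(\stsd)'$, so an element of $\stsd$ is killed only by testing against all of $\mtsn$). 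The correct finish is the cancellation argument you state and then set aside: a single-trace density has opposite values on the two copies of $\Gamma$ in $[\Gamma]$, so the integrand in \eqref{eq:Vf} cancels and the single-layer potential vanishes identically in $\mathbb{R}^3\setminus\overline{\Gamma}$; hence every trace of it, in particular $\OV_{\kappa}\phi$, is zero, and no duality ``upgrade'' is needed. Two smaller remarks on the reverse inclusion: the G\aa{}rding-inequality detour is superfluous once you invoke injectivity on the jump space, and the uniqueness step (vanishing Dirichlet data plus radiation condition $\Rightarrow$ potential $\equiv 0$) tacitly assumes either that $\mathbb{R}^3\setminus\overline{\Gamma}$ has no bounded components or that $\kappa^2$ is not a Dirichlet eigenvalue of such a component; for the screen-like geometries targeted here this is harmless, but it should be stated, since a general multi-screen may enclose cavities. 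The analogous points apply to the $\OW_{\kappa}$ case.
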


From these results, we see that ${\OV_{\kappa}}$ and ${\OW_{\kappa}}$ remain 
well-defined on the corresponding jump spaces and are coercive there. Indeed, 
Theorem~8.11 in \cite{steinBook} combined with \cite[Prop.~8.8]{ClH13} 
and \cite[Prop.~8.9]{ClH13} gives us the following inf-sup conditions:

\begin{corollary}
\label{cor:DisInfSupQSpace}
\begin{enumerate}
\item[i)] For a dense sequence of finite dimensional subspaces $(\widetilde{Z}^{
1/2}_h(\Gamma))_{h \in \mathcal{H}}\subset\jspd$ there exist $h_0>0$ such that 
for all $h\leq h_0$ it holds
\begin{align}
\underset{u_h\in\widetilde{Z}^{1/2}_h(\Gamma)}{\inf}\:\underset{v_h\in\widetilde{
Z}^{1/2}_h(\Gamma)}{\sup} \dfrac{\vert\langle \OW_{\kappa} u_h, v_h \rangle\vert}
{\Vert u_h \Vert_{\jspd}\Vert v_h \Vert_{\jspd}} \geq \alpha_{\OW_{\kappa}}>0,
\end{align}
and
\begin{align}
\underset{v_h\in\widetilde{Z}^{1/2}_h(\Gamma)}{\inf}\:\underset{u_h\in\widetilde{
Z}^{1/2}_h(\Gamma)}{\sup} \dfrac{\vert\langle \OW_{\kappa} u_h, v_h \rangle\vert}
{\Vert u_h \Vert_{\jspd}\Vert v_h \Vert_{\jspd}} \geq \alpha_{\OW_{\kappa}}>0,
\end{align}
where $\alpha_{\OW_{\kappa}}$ is independent of $h$.

\item[ii)] For a dense sequence of finite dimensional subspaces $(\widetilde{Z}^{
-1/2}_h(\Gamma))_{h \in \mathcal{H}}\subset\jspn$ there exist $h_0>0$ such that 
for all $h\leq h_0$ it holds
\begin{align}
\underset{\mu_h\in\widetilde{Z}^{-1/2}_h(\Gamma)}{\inf}\:\underset{\varphi_h\in
\widetilde{Z}^{-1/2}_h(\Gamma)}{\sup}\dfrac{\vert\langle \OV_{\kappa} \mu_h, 
\varphi_h\rangle\vert}{\Vert \mu_h \Vert_{\jspn}\Vert \varphi_h\Vert_{\jspn}} 
\geq \alpha_{\OV_{\kappa}}>0,
\end{align}
and
\begin{align}
\underset{\varphi_h\in\widetilde{Z}^{-1/2}_h(\Gamma)}{\inf}\:\underset{\mu_h\in
\widetilde{Z}^{-1/2}_h(\Gamma)}{\sup}\dfrac{\vert\langle \OV_{\kappa} \mu_h, 
\varphi_h\rangle\vert}{\Vert \mu_h \Vert_{\jspn}\Vert \varphi_h\Vert_{\jspn}} 
\geq \alpha_{\OV_{\kappa}}>0,
\end{align}
where $\alpha_{\OV_{\kappa}}$ is independent of $h$.
\end{enumerate}
\end{corollary}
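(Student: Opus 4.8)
The plan is to derive the discrete inf-sup conditions of Corollary~\ref{cor:DisInfSupQSpace} from the G\aa{}rding inequalities of Proposition~\ref{prop:GardIneq} together with the injectivity statement of Lemma~\ref{lem:kernVW}, via the standard ``Fredholm alternative + conforming Galerkin'' machinery encapsulated in \cite[Theorem~8.11]{steinBook}. First I would observe that by Lemma~\ref{lem:kernVW} the operator $\OW_\kappa$ descends to an operator on the jump space $\jspd$ (since its kernel is exactly $\stsd$, the space modded out) and, by the isometric duality $\jspd \cong (\stsn)'$ together with the pairing \eqref{eq:llgg}, the associated bilinear form $\langle \OW_\kappa \cdot, \cdot\rangle$ is a bounded sesquilinear form on $\jspd \times \jspd$. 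Proposition~\ref{prop:GardIneq} tells us this form satisfies a G\aa{}rding inequality with the compact perturbation $\mathsf{K}_W$. Boundedness plus G\aa{}rding plus injectivity of the continuous operator on $\jspd$ (again Lemma~\ref{lem:kernVW}) is precisely the hypothesis set of \cite[Theorem~8.11]{steinBook}, which then yields: (a) the continuous inf-sup condition on $\jspd$, and (b) the existence of $h_0 > 0$ such that the \emph{same} inf-sup constant (up to a fixed factor) holds on every space $\widetilde{Z}^{1/2}_h(\Gamma)$ in a dense sequence, for $h \le h_0$, with both the ``$\inf_u \sup_v$'' and the ``$\inf_v \sup_u$'' versions following by the Fredholm/finite-dimensionality symmetry. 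The argument for $\OV_\kappa$ on $\jspn$ is verbatim the same, using \eqref{eq:giv}, the kernel characterization $\nullsp \OV_\kappa = \stsn$, and the duality $\jspn \cong (\stsd)'$.

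More concretely, the key steps in order are: (i) record that $\OW_\kappa: \jspd \to (\jspd)' \cong \stsn$ is bounded and injective, the injectivity being the content of Lemma~\ref{lem:kernVW} read on the quotient; (ii) invoke Proposition~\ref{prop:GardIneq} to get the decomposition $\OW_\kappa = (\OW_\kappa + \mathsf{K}_W) - \mathsf{K}_W$ into a coercive part and a compact part; (iii) apply \cite[Theorem~8.11]{steinBook} (the abstract Galerkin theory for coercive-plus-compact forms on a dense sequence of conforming subspaces) to obtain the uniform discrete inf-sup constant $\alpha_{\OW_\kappa}$ and threshold $h_0$; (iv) repeat (i)--(iii) with $V$ in place of $W$, $\jspn$ in place of $\jspd$, and \eqref{eq:giv} in place of \eqref{eq:giw}, to get $\alpha_{\OV_\kappa}$. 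Both the primal and the transposed inf-sup inequalities in each item come out of the theorem simultaneously, since the form is not symmetric but the compact-perturbation argument is insensitive to swapping the roles of the two arguments.

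The main obstacle I expect is not analytic but bookkeeping: one must be careful that the finite-dimensional spaces $\widetilde{Z}^{1/2}_h(\Gamma)$ and $\widetilde{Z}^{-1/2}_h(\Gamma)$ are genuinely \emph{conforming} subspaces of the jump spaces $\jspd$ and $\jspn$ and that they form a \emph{dense} sequence there — this is where the quotient-space BEM construction of \cite{CGH21} enters, since the natural discrete spaces live in the multi-trace spaces $\mtsd, \mtsn$ and one passes to the jump spaces by the canonical projection; one has to check that this projection of the discrete multi-trace spaces is dense in the jump space and that the inf-sup on the quotient is not spoiled by the single-trace null directions (which it is not, precisely because the bilinear form vanishes on $\stsn \times \mtsd$ and $\mtsd \times \stsn$ by Proposition~\ref{prop:polar}, so the form is well-defined and the relevant quantities only see the jump part). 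Once this compatibility is in place, the rest is a direct citation of the abstract theorem, so I would keep the written proof short: state the three ingredients (boundedness, G\aa{}rding, injectivity), cite \cite[Theorem~8.11]{steinBook}, and remark that the $V$-case is identical.
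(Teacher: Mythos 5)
Your proposal matches the paper's own argument: the paper justifies this corollary by combining the G\aa{}rding inequalities of Proposition~\ref{prop:GardIneq} (i.e.\ \cite[Prop.~8.8]{ClH13}) with injectivity on the jump spaces (\cite[Prop.~8.9]{ClH13}, which is the quotient-space reading of Lemma~\ref{lem:kernVW} that you use) and then invokes \cite[Theorem~8.11]{steinBook} for conforming dense sequences of subspaces. Your steps (boundedness, G\aa{}rding, injectivity, then the abstract Galerkin theorem, applied symmetrically to $\OW_\kappa$ on $\jspd$ and $\OV_\kappa$ on $\jspn$) are essentially the same route, so the proof is correct as proposed.
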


Additionally, these operators are also well-defined on the multi-trace spaces 
$\mtsn$ and $\mtsd$, respectively. However, Lemma~\ref{lem:kernVW} implies that they have non-trivial nullspaces when 
considered on multi-trace spaces. Although this hinders uniqueness of solutions for
\eqref{eq:varV} and \eqref{eq:varW}, {Proposition}~\ref{prop:polar} still 
provides existence, since ${g}_{D}\in\stsd$ and ${f}_{N}\in \stsn$ 
guarantees consistency of the right-hand side linear forms: they vanish on the
single-trace spaces.

%%%%%%%%%%%%%%%%%%%%%%%%%%%%%%%%%%%%%%%%%%%%%%%%%%%%%%%%%%%%%%%%%%%%%%%%%%%%%%%
\section{Calder\'on Preconditioning for Quotient-Space BEM}
\label{sec:CaldPreFull}

As already mentioned in the introduction, the linear systems arising from 
the discretization of \eqref{eq:varV} and \eqref{eq:varW} using Quotient-space 
BEM are ill-conditioned, which causes that the number of GMRES iteration counts 
increases with mesh refinement. One should note that this is not a particularity 
of Quotient-space BEM. Indeed, we usually encounter this difficulty when using 
low-order BEM discretization of first-kind integral equations on simple screens 
and closed surfaces. In those cases, one typically fixes the problem by using so-called 
Calder\'on preconditioning, which combines Calder\'on identities with operator 
preconditioning to build a very convenient and effective preconditioner \cite{StW98,ChN00,Hip06}.

In this paper, we will extend this approach and devise Calder\'on 
preconditioners for the problem at hand.
Following the policy of operator preconditioning (see, for instance 
\cite{Hip06}), we introduce the following more general notation in order to 
state the results that will hold for both BIEs under consideration (i.e. \eqref{eq:BIEV} and \eqref{eq:BIEW}):
\begin{itemize}
\item Let $\smtp$ and $\smtd$ be \emph{multi-trace} spaces such that 
$\smtd = \left( \smtp \right)^\prime$. 
\item Let $\stp$ and $\std$ be \emph{single-trace} spaces such that 
$\stp \subset \smtp$ and $\std \subset \smtd$. 
\item Let $\sjp$ and $\sjd$ be \emph{jump} spaces such that 
$\sjp = \smtp / \stp$ and $\sjd = \smtd / \std$.  
\end{itemize}

What these spaces will be exactly, depends on whether we are solving the Dirichlet or Neumann problem. For clarity, we will consider each case separately 
in the next Subsections.

Now, in both cases, we are interested in continuous sesquilinear forms $\ba 
\in L(\smtp \times \smtp, \mathbb{C})$, that will characterize the variational 
formulations of our BIEs \eqref{eq:varV} and \eqref{eq:varW}. However, unlike 
in the traditional operator preconditioning setting \cite{Hip06}, we know from 
Corollary~\ref{cor:DisInfSupQSpace} that these sesquilinear forms $\ba$ will 
satisfy an inf-sup condition on $\sjp$, but not on $\smtp$.

Naturally, this will affect the corresponding discrete inf-sup conditions, 
and hence, the condition number bounds. The remainder of this section is 
dedicated to understanding this, and to answering the question of whether the discrete 
inf-sup conditions are satisfied and how they depend on the mesh 
parameter $h$ when using Quotient-space BEM.

Let us begin by introducing the 
notation for the corresponding finite dimensional spaces. On the one hand, we 
will work with
\begin{itemize}
\item $\besmtp \subset \smtp$ : primal \emph{multi-trace} BE space for $\Gamma$; and
\item $\besmtd \subset \smtd$ : dual \emph{multi-trace} BE space for $\Gamma$,
\end{itemize}
which will be actually used for the implementation. We remark that $\besmtp$ 
and $\besmtd$ are Hilbert spaces. On the other hand, we 
consider the finite-dimensional subspaces
\begin{align*}
    \besstp \subset \stp, \qquad \besstd \subset \std, \\
    \besjp \subset \sjp, \qquad \besjd \subset \sjd,
\end{align*}
which will only be used to show our theoretical results. 
It is worth mentioning that we always assume that these finite-dimensional 
subspaces satisfy
\begin{align*}
    \besstp &\subset \besmtp, &\quad \besstd &\subset \besmtd, \\
    \besjp &= \besmtp /\besstp, &\quad \besjd &= \besmtd /\besstd.
\end{align*}

%%%%%%%%%%%%%%%%%%%%%%%%%%%%%%%%%%%%%%%%%%%%%%%%%%%%%%%%%%%
\subsection{Preconditioning the Hypersingular operator}
\label{ssec:precW}

When considering the Neumann problem \eqref{eq:varW}, we will have that $\smtp=\mtsd$, $\stp = \stsd$, $\sjp = \jspd$ for primal 
spaces, and $\smtd=\mtsn$, $\std = \stsn$, $\sjd =\jspn$ for dual ones.

Let 
$\meshsymb_h$ be a triangular virtual surface mesh of $\Gamma$ built as 
in \cite[Section~4.1]{CGH21},
with target element size $h$, and let $\check{\meshsymb}_h$ be its dual as realised on the barycentric refinement \cite{BuC07}. 
It is worth noticing that the BE spaces above can be chosen as 
\begin{itemize}
\item $\besmtp = \mathcal{S}^{1,0}(\meshsymb_h)$: piecewise linear 
``continuous'' functions on $\meshsymb_h$,
\item $\besmtd = \mathcal{S}^{0,-1}(\check{\meshsymb}_h)$: piecewise constant 
functions on $\check{\meshsymb}_h$,
\end{itemize}
Moreover, the duality pairing $\ll \cdot, \cdot \gg$ preserves the %following 
duality $\besmtd = \left( \besmtp \right)^\prime$.

%%%%%%%%%%%%%%%%%%%%%%%%%%%%%%%%%%%%%%%%%%%%%%%%%%%%%%%%%%%
\subsubsection{A flawed idea}
\label{sssec:fidea}
With these choices and Corollary~\ref{cor:DisInfSupQSpace}, we have that 
\begin{align}
\sup_{\mu_h \in \besstd} \dfrac{\vert \ll x_h, \mu_h\gg \vert}{\Vert \mu_h 
\Vert_{\stsn}} \geq C_{1} \Vert x_h \Vert_{\jspd}, 
\qquad \text{ and} \qquad
\sup_{u_h \in \besjp} \dfrac{\vert \ll \OW_{\kappa}\, x_h,u_h\gg \vert}{\Vert u_h \Vert_{\jspd}}
\geq C_{\OW} \Vert x_h \Vert_{\jspd}, 
\end{align}
for all $x_h \in \besjp$.
Therefore, it only remains to find an operator $\OB_{\kappa}:\stsn \to \jspd$ 
such that
\begin{align}
\sup_{\mu_h \in \besstd} \dfrac{\vert \ll \OB_{\kappa}\, y_h, \mu_h\gg \vert}{\Vert 
\mu_h \Vert_{\stsn}} \geq C_{\OB} \Vert y_h \Vert_{\stsn}, \quad \forall y_h 
\in \besstd.
\end{align}

Furthermore, based on Calder\'on preconditioning for closed surfaces and its 
applicability to simple screens, one could think of setting $\OB_{\kappa}$ to 
be the weakly singular operator $\OV_{\kappa}$. However, it is clear from 
Lemma~\ref{lem:kernVW} that $\OV_{\kappa}$ will not do the job.

%%%%%%%%%%%%%%%%%%%%%%%%%%%%%%%%%%%%%%%%%%%%%%%%%%%%%%%%%%%
\subsubsection{Changing perspective}
\label{sssec:changingperp}

In order to find the right operator $\OB_{\kappa}$, it is useful to first 
understand what we are looking for. Indeed, when pursuing a quotient-space
discretization $\besjp$ of $\jspd$, it makes sense to study the Galerkin
matrices in the multi-trace discrete spaces $\besmtp$ and $\besmtd$. 

Let $\VA_{\OW}$ be the Galerkin matrix of the hypersingular operator $\OW_{
\kappa}$ on $\besmtp$. Then we know from Lemma~\ref{lem:kernVW} and \cite{CGH21}
that $\ker(\VA_{\OW})=\besstp$ and that GMRES can still solve the arising 
linear system as long as the right hand side vector $\Vg$ is consistent, i.e. 
$\Vg \in \text{Range}(\VA_{\OW})$.

Let $\VP_{\OW}$ be the matrix we will use to (left) precondition $\VA_{\OW}$. 
The first condition we need to satisfy is that the system
\begin{equation}
    \VP_{\OW} \VA_{\OW} \Vu = \VP_{\OW} \Vg
\end{equation}
is consistent. If we choose $\VP_{\OW}$ to be invertible, this is automatically 
satisfied. 
Hence, in order to have a suitable operator preconditioner we need
\begin{itemize}
    \item a stable duality pairing for $\besmtp \times \besmtd$; and
    \item $\VB_{\kappa,h}$ invertible (in $\besmtd$),
\end{itemize}
since this will imply that $\VP_{\OW} = \VM^{-1} \VB_{\kappa,h}  \VM^{-T}$ 
is invertible. Here $\VM$ is the Galerkin matrix of the duality pairing
$\besmtp \times \besmtd$.

%%%%%%%%%%%%%%%%%%%%%%%%%%%%%%%%%%%%%%%%%%%%%%%%%%%%%%%%%%%
\subsubsection{Implementation}
\label{ssec:ImpW}

Note that by construction of the inflated screen, which can be understood as a 
virtual closed surface, the space $\mathcal{S}^{1,0}(\meshsymb_h)$ has one 
degree of freedom at the vertices in $\meshsymb_h \cap \partial \Gamma$. Since 
solutions for the hypersingular equation \eqref{eq:varW} live in $\jspd$, we 
know they will be zero on $\partial \Gamma$ \cite{CGH21}.
Considering that solutions for the hypersingular equation \eqref{eq:varW} live in $\jspd$, and because such functions are only determined up to contributions in $\stsd$, degrees of freedom on $\partial \Gamma$ (which by construction are in $\stsd$) can be safely deleted.
Hence, instead of working with 
$\mathcal{S}^{1,0}(\meshsymb_h)$, we consider  $\mathcal{S}_{0}^{1,0}(\meshsymb_h)\subset \mtsd$ : piecewise linear 
``continuous'' functions on the inflated screen $[\Gamma]$ that 
are zero on $\partial \Gamma$.

When dealing with multi-screens of \textbf{type A}, this will have the computational advantage of allowing us to decouple the BE spaces on each side of the triangular virtual surface mesh $\meshsymb_h$, as depicted in Figure~\ref{fig:geo_all}.

Let us illustrate how we implemented these BE spaces on a multi-screen $\Gamma$ 
that consists of three simple screens $\Gamma_i, i=1,2,3$ meeting at a junction:
\begin{enumerate}
    \item We decompose the inflated multi-screen $[\Gamma]$ as
\begin{equation}
    \label{eq:inflated_screen}
    [\Gamma] = \cup_{l=1}^{3} \inter_{l}
\end{equation}
with $\inter_l = \Gamma_{l} \cup \Gamma_{l+1}$. The normal on $\inter_l$ 
is chosen \emph{outward}. Each simple screen $\Gamma_i$ appears once as the \emph{front} and once as 
the \emph{back} of the multi-screen (Figure~\ref{fig:geo_all}). 

\item For $i=1,2,3$, we create the triangular surface mesh $\Gamma_{i,h}$ 
of $\Gamma_i$ with target element size $h$, and such that the meshes 
$\Gamma_{i,h}$ for $i=1,2,3$ match up along the junction.

We remark that the simple screens $\inter_l$ inherit this mesh. In other 
words, we have $\inter_{l,h}= \Gamma_{l,h} \bigcup \Gamma_{l+1,h}, \, l=1,2,3$.

\begin{figure}
    \centering
    \includegraphics[width=0.4\textwidth]{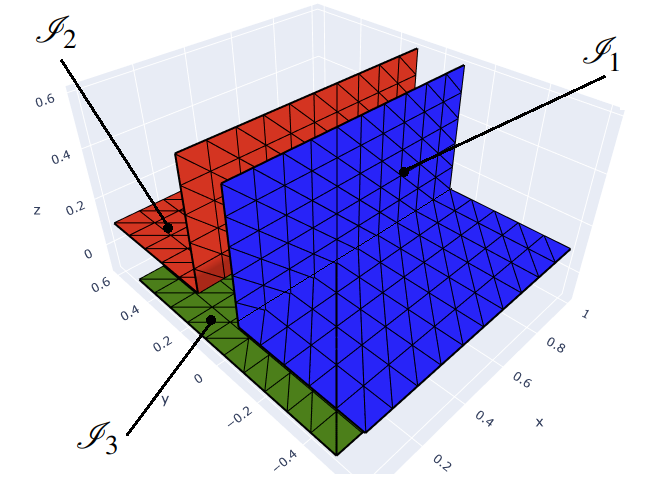}
    \caption{Back-front conforming mesh on the multi-screen.}
    \label{fig:geo_all}
\end{figure}

\item The discrete \emph{primal} multi-trace space is built as the direct product of these spaces, i.e.
\begin{equation}
\besmtp = \mathcal{S}_{0}^{1,0}(\meshsymb_h) = \prod_{l=1}^{3} \mathcal{S}_{0}^{1,0}
(\inter_{l,h}).
\label{eq:primalBES_W}
\end{equation} 

\item Construct the dual BE spaces on the simple screens $\inter_l$ 
following the cue from \cite{BuC07}. For this, let $\check{\inter}_{l,h}$ 
denote the dual barycentric mesh to $\inter_{l,h}$, built as in 
\cite[Definition~2]{HiU16}. Then we introduce the space
$\mathcal{S}^{0,-1}(\check{\inter}_{l,h})\subset \mathbb{H}^{-\half}
(\inter_l)$ of piecewise constant functions supported by the dual cells of $\check{\inter}_{l,h}$ that correspond to nodes not on the boundary of $\inter_l$. In particular we have that $\dim \mathcal{S}^{0,-1}(\check{\inter}_{l,h}) = \dim \mathcal{S}^{1,0}_0(\inter_{l,h})$.

\item The discrete \emph{dual} multi-trace space is built as the direct product of these dual spaces, i.e. 
\begin{equation}
\besmtd = \mathcal{S}^{0,-1}
(\check{\meshsymb}_h)= \prod_{l=1}^{3} \mathcal{S}^{0,-1}(\check{\inter}_{l,h}).
\label{eq:dualBES_W}
\end{equation} 
\end{enumerate}

\begin{remark}
\label{rem:stability}

It is worth noticing that the $L^2(\Gamma)$-duality product between $\besmtp$ 
and $\besmtd$ as chosen in \eqref{eq:primalBES_W} and \eqref{eq:dualBES_W} is 
stable \cite{steinBook2}. Hence, this implementation leads to a Galerkin matrix $\VM$ that is 
bounded and invertible.
\end{remark}

\begin{remark}
The description of discrete multi-trace spaces \eqref{eq:primalBES_W} and \eqref{eq:dualBES_W} is not valid for 
multi-screens of Types B and C. 
\end{remark}

%%%%%%%%%%%%%%%%%%%%%%%%%%%%%%%%%%%%%%%%%%%%%%%%%%%%%%%%%%%
\subsubsection{Block Calder\'on Preconditioner}

Under the considerations of the previous subsections, we propose to use 
Calder\'on preconditioning blockwise. 
This means, 
we will build a preconditioner for the Galerkin matrix for the hypersingular 
operator $\VW_{\kappa,h}$ based on the dual Galerkin matrix 
\begin{align}
    \VB_{\kappa,h}^{\OV} := \left(\begin{array}{c c c}
    \check{\VV}_{\kappa,1} & \mathbf{0} & \mathbf{0} \\
    \mathbf{0} &\check{\VV}_{\kappa,2} &  \mathbf{0}  \\
    \mathbf{0} &\mathbf{0} &\check{\VV}_{\kappa,3}\\
    \end{array}\right),
    \label{eq:BV}
\end{align}
where $\check{\VV}_{\kappa,l}[i,j]=\langle \OV_{\kappa} \check{\psi}_j, 
\check{\psi}_i\rangle_{\inter_{l}}$ with $\check{\psi}_i,\check{\psi}_j$ in the standard basis of $\mathcal{
S}^{0,-1}(\check{\inter}_{l,h})$ for $l=1,2,3$.

The motivation to consider this $\VB_{\kappa,h}^{\OV}$ is that the choice
of discrete spaces from \eqref{eq:primalBES_W} and \eqref{eq:dualBES_W} 
allows us to decouple what is happening on the dual space of each simple 
screen $\inter_{l}$. 
Furthermore, they would agree with the standard discretization of the
 jump spaces on simple screens. More concretely, we have that $\mathcal{S}_{0}^{1,0}
({\inter}_{l,h})\subset \widetilde{H}^{1/2}(\inter_{l})$ and $\mathcal{S}^{0,-1}(\check{\inter}_{l,h}) \subset \widetilde{H}^{-1/2}(\inter_{l})$. 

\begin{proposition}
\label{prop:DisInfSupBV}
Let $\OB_{\kappa,h}^{\OV} \,:\, \besmtd \to \besmtp$ be the linear operator 
corresponding to $\VB_{\kappa,h}^{\OV}$ defined in \eqref{eq:BV}. 
For the discrete spaces defined in this Subsection, 
we have that for all $h \leq h_0$ it holds that 
\begin{align}
    \sup_{u_h\in\besmtd\setminus\{0\}} 
    \dfrac{\vert \ll \OB_{\kappa,h}^{\OV} v_h, u_h \gg \vert}{\Vert u_h \Vert_{\mtsd}}
    \geq \alpha_{\OB_{\OV}} (1 + \vert \log h \vert)^{-2} \Vert v_h \Vert_{\mtsd}
\end{align}
for all $v_h \in \mtsd$, and with $\alpha_{\OB_{\OV}}>0$ independent of $h$. 
\end{proposition}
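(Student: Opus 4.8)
The plan is to strip off the multi-screen structure and reduce the claim to the classical Calder\'on-preconditioning estimate on each of the simple screens $\inter_l$; the argument is then a ``block'' version of the well-understood simple-screen analysis, because every object in sight respects the decomposition $[\Gamma]=\bigcup_{l=1}^{3}\inter_l$. First I would record the structural facts that drive the reduction. (i) By \eqref{eq:primalBES_W}--\eqref{eq:dualBES_W} the discrete multi-trace spaces are genuine products, $\besmtp=\prod_{l}\mathcal{S}^{1,0}_{0}(\inter_{l,h})$ and $\besmtd=\prod_{l}\mathcal{S}^{0,-1}(\check{\inter}_{l,h})$, and -- precisely because we have deleted the degrees of freedom on $\partial\inter_l$ -- one has $\mathcal{S}^{1,0}_{0}(\inter_{l,h})\subset\widetilde{H}^{1/2}(\inter_l)$ and $\mathcal{S}^{0,-1}(\check{\inter}_{l,h})\subset\widetilde{H}^{-1/2}(\inter_l)$, i.e.\ these spaces are conforming for the \emph{jump} spaces of the simple screen $\inter_l$. (ii) Since $[\Gamma]$ is the disjoint union (in the inflated picture) of the $\inter_l$, the pairing $\ll\cdot,\cdot\gg$ splits additively, $\ll v_h,u_h\gg=\sum_{l=1}^{3}\langle v_h|_{\inter_l},u_h|_{\inter_l}\rangle_{\inter_l}$; this is exactly why both $\VM$ and $\VB_{\kappa,h}^{\OV}$ of \eqref{eq:BV} are block-diagonal, and it gives $\ll\OB_{\kappa,h}^{\OV}v_h,u_h\gg=\sum_{l}\langle\OV_\kappa(v_h|_{\inter_l}),u_h|_{\inter_l}\rangle_{\inter_l}$. (iii) The multi-trace norm localizes: as the partition is finite, $\N{\cdot}_{\mtsn}$ is equivalent, with $h$-independent constants, to the Hilbertian sum of the trace norms induced on the simple screens $\inter_l$. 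Granting (i)--(iii), the asserted inf-sup inequality follows componentwise from a single-screen statement, and the finitely many equivalence constants are absorbed into $\alpha_{\OB_{\OV}}$.

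Second, for a fixed $\inter_l$ I would invoke the established theory of Calder\'on preconditioning on simple screens \cite{HJU13,HJU20}, whose ingredients are already available here in the single-screen setting: the G\aa{}rding inequality for $\OV_\kappa$ on $\widetilde{H}^{-1/2}(\inter_l)$ together with injectivity of $\OV_\kappa$ there (the simple-screen counterparts of Proposition~\ref{prop:GardIneq} and Lemma~\ref{lem:kernVW}), which by the abstract inf-sup argument \cite[Thm.~8.11]{steinBook} yields a discrete inf-sup condition for $\OV_\kappa$ on the conforming space $\mathcal{S}^{0,-1}(\check{\inter}_{l,h})$, uniformly for $h\le h_{0,l}$; the stability of the $L^2(\inter_l)$-pairing between $\mathcal{S}^{1,0}_{0}(\inter_{l,h})$ and $\mathcal{S}^{0,-1}(\check{\inter}_{l,h})$ on the barycentric refinement \cite{BuC07}; and the logarithmic Sobolev-norm bounds of McLean--Steinbach \cite{McS99} comparing the $\widetilde{H}^{\pm 1/2}(\inter_l)$- and $H^{\pm 1/2}(\inter_l)$-norms on these boundary element spaces. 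Carrying out the operator-preconditioning bookkeeping on $\inter_l$, where the standard mismatch between the $\widetilde{H}^{\pm 1/2}$ (natural coercivity) norms and the $H^{\pm 1/2}$ (natural duality) norms forces two factors of $(1+|\log h|)$, gives a per-screen estimate of the shape
\begin{equation*}
\sup_{u_h\in\mathcal{S}^{0,-1}(\check{\inter}_{l,h})\setminus\{0\}}\ \frac{|\langle\OV_\kappa\,\phi_h,u_h\rangle_{\inter_l}|}{\N{u_h}_{X_l}}\ \ \ge\ \ c\,(1+|\log h|)^{-2}\,\N{\phi_h}_{X_l}\qquad\text{for all }\phi_h\in\mathcal{S}^{0,-1}(\check{\inter}_{l,h}),
\end{equation*}
with $c>0$ independent of $h$ and $\N{\cdot}_{X_l}$ the trace norm induced on $\inter_l$ by $\N{\cdot}_{\mtsn}$. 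Summing over $l=1,2,3$, using (ii)--(iii), and setting $h_0=\min_l h_{0,l}$ then closes the argument.

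The hard part is step (iii) together with an honest accounting of the logarithms: one must show that the quotient norm defining $\N{\cdot}_{\mtsn}$ really is uniformly equivalent to a sum of \emph{local} trace norms on the $\inter_l$ that are compatible with the discrete spaces built in Subsection~\ref{ssec:ImpW}, and then track which of the resulting norm identifications are isometric and which each cost a factor $(1+|\log h|)$, so that the final rate is $(1+|\log h|)^{-2}$ and no worse. A secondary point to isolate is the kernel: although $\OV_\kappa$ has the nontrivial kernel $\stsn$ when considered on all of $\mtsn$ (Lemma~\ref{lem:kernVW}), its restriction to each dual block $\mathcal{S}^{0,-1}(\check{\inter}_{l,h})\subset\widetilde{H}^{-1/2}(\inter_l)$ only sees the injective single-screen operator, which is what makes the block-diagonal $\VB_{\kappa,h}^{\OV}$ invertible on $\besmtd$ and the per-screen inf-sup meaningful.
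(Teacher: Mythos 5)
Your overall strategy is the same as the paper's: split the pairing blockwise over the $\inter_l$, apply single-screen theory on each block, and bridge the $\widetilde{H}^{\pm 1/2}(\inter_l)$ versus $H^{\pm 1/2}(\inter_l)$ mismatch with discrete logarithmic inequalities. However, there is a genuine gap at your step (iii), and it is precisely the multi-screen-specific content of the proposition. The asserted $h$-uniform equivalence of $\Vert\cdot\Vert_{\mtsn}$ (resp.\ $\Vert\cdot\Vert_{\mtsd}$) with a Hilbertian sum of local trace norms on the $\inter_l$ is not available: only one direction is uniform, namely $\Vert v\Vert^2_{\text{multi-trace}} \geq \sum_l \Vert v_{|\inter_l}\Vert^2_{H^{\pm 1/2}(\inter_l)}$, which follows from the injection of Lemma~\ref{lem:MTinjections}. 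The reverse direction fails uniformly in $H^{\pm1/2}$-norms and fails in $\widetilde{H}^{\pm1/2}$-norms in the other direction; what is true, and what the paper proves (Lemmas~\ref{lem:invIneqDir} and \ref{lem:invIneqNeu}), is a one-sided bound valid only on the discrete spaces and only up to a factor $(1+\vert\log h\vert)$, obtained via the duality-decoupling Lemmas~\ref{lem:dualityDecoupling1}--\ref{lem:dualityDecoupling2}, the stable projections $\tilde{Q}_h^j$ and $\Pi_h$, and Heuer's inverse estimates under Assumption~\ref{asm:MeshAssumption}. You flag this as ``the hard part'' but offer no substitute for that machinery, so the reduction to the single screens cannot be closed as written.

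Your log bookkeeping also does not match where the logarithms actually arise, even though the final exponent coincides. On a single screen, the discrete inf-sup for $\OV_\kappa$ on $\mathcal{S}^{0,-1}(\check{\inter}_{l,h})\subset\widetilde{H}^{-1/2}(\inter_l)$ combined with one application of the inverse inequality costs at most one logarithm when expressed in $H^{-1/2}(\inter_l)$-norms, not two; in fact the paper's per-screen step (the G\aa{}rding-plus-compact-perturbation choice of test function $u_{hl}=v_{hl}-\OD_{\OW,l}^{-1}\OT_{\OW,l}v_{hl}$, as in \cite[Theorem~8.11]{steinBook}) produces a bound with no logarithm at all, since $\Vert\cdot\Vert_{H^{\pm1/2}(\inter_l)}\leq\Vert\cdot\Vert_{\widetilde{H}^{\pm1/2}(\inter_l)}$ is used only in the favourable direction. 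Both factors of $(1+\vert\log h\vert)$ in the statement come from the norm-localization step: one application of Lemma~\ref{lem:invIneqDir} to control $\Vert u_h\Vert_{\mtsd}$ in the denominator and one to recover $\Vert v_h\Vert_{\mtsd}$ on the right-hand side (after a Polya--Szeg\"o rearrangement of the sum of products). Your proposal instead assigns two logarithms to the per-screen estimate and none to localization; the numbers compensate, but the argument rests on the unproven uniform equivalence, so as it stands the proof is incomplete at its decisive step.
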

\begin{proof}
By definition of $\VB_{\kappa,h}^{\OV}$ we have that
\begin{align}
    \sup_{u_h\in\besmtd\setminus\{0\}} 
    \dfrac{\vert \ll \OB_{\kappa,h}^{\OV} v_h, u_h \gg \vert}{\Vert u_h \Vert_{\mtsd}}
    = \sup_{u_h\in\besmtd\setminus\{0\}} 
    \dfrac{\vert \sum_{l} \langle \OW_{\kappa} v_h, u_h \rangle_{\inter_{l}}\vert}{\Vert u_h \Vert_{\mtsd}}.   
    \label{eq:DISBV1}
\end{align} 

Recall that $\OW_{\kappa}$ satisfies a G\aa{}rding inequality on each $\inter_l$ with a compact operator $\OT_{\OW,l}$. Let $\OD_{\OW, l} := \OW_{\kappa}
+ \OT_{\OW,l}$ and denote by $u_{hl}$ the restriction
of $u_h$ to $\inter_{l}$. Then, we choose $u_h$ such that $u_{hl} = v_{hl} - 
{\OD_{\OW, l}}^{-1} \OT_{\OW,l}v_{hl}$. Plugging this into \eqref{eq:DISBV1} gives
\begin{align}
    \sup_{u_h\in\besmtd\setminus\{0\}} 
    \dfrac{\vert \ll \OB_{\kappa,h}^{\OV} v_h, u_h \gg \vert}{\Vert u_h \Vert_{\mtsd}}
    \geq \dfrac{\vert \sum_{l} \langle \OW_{\kappa} v_{hl}, v_{hl} - 
{\OD_{\OW, l}}^{-1} \OT_{\OW,l}v_{hl} \rangle_{\inter_{l}}\vert}{\Vert u_h \Vert_{\mtsd}}
    \label{eq:DISBV2}
\end{align} 
Next, following standard arguments (\emph{c.f}\cite[Theorem~8.11]{steinBook}), 
one gets that for this choices of $u_{hl}$ there exists an $\tilde{h}_{l}\in \mathbb{R}_+$ such that 
\begin{align*}
 \langle \OW_{\kappa} v_{hl}, u_{hl} \rangle_{\inter_{l}} \geq 
\tilde{c}_l \Vert v_{hl} \Vert_{\widetilde{H}^{1/2}(\inter_{l})} 
\Vert u_{hl}  \Vert_{\widetilde{H}^{1/2}(\inter_{l})} 
\geq 
\tilde{c}_l \Vert v_{hl} \Vert_{{H}^{1/2}(\inter_{l})} 
\Vert u_{hl}  \Vert_{{H}^{1/2}(\inter_{l})} 
\end{align*}
is satisfied for all $h\leq \tilde{h}_l$, and with $\tilde{c}_l>0$ independent of $h$.
Hence, we get
\begin{align}
    \sup_{u_h\in\besmtd\setminus\{0\}} 
    \dfrac{\vert \ll \OB_{\kappa,h}^{\OV} v_h, u_h \gg \vert}{\Vert u_h \Vert_{\mtsd}}
    \geq \dfrac{\sum_{l} \tilde{c}_l \Vert v_{hl} \Vert_{{H}^{1/2}(\inter_{l})} 
\Vert u_{hl}  \Vert_{{H}^{1/2}(\inter_{l})} }{\Vert u_h \Vert_{\mtsd}},
    \label{eq:DISBV3}
\end{align} 
for all $h\leq h_0:= \min_{l} \tilde{h}_l$.

Now, let $c_*= \min \tilde{c}_l$ and use Polya and Szeg\"o's inequality to 
further bound our expression as follows
\begin{align}
    \sup_{u_h\in\besmtd\setminus\{0\}} 
    \dfrac{\vert \ll \OB_{\kappa,h}^{\OV} v_h, u_h \gg \vert}{\Vert u_h \Vert_{\mtsd}}
    \geq c_* \dfrac{\sum_{l_1}\Vert v_{hl_1} \Vert_{{H}^{1/2}(\inter_{l_1})} 
\sum_{l_2}\Vert u_{hl_2}  \Vert_{{H}^{1/2}(\inter_{l_2})} }{\Vert u_h \Vert_{\mtsd}}.
    \label{eq:DISBV4}
\end{align} 
Finally, using the inverse inequality from Lemma~\ref{lem:invIneqDir}, we conclude 
\begin{align}
    \sup_{u_h\in\besmtd\setminus\{0\}} 
    \dfrac{\vert \ll \OB_{\kappa,h}^{\OV} v_h, u_h \gg \vert}{\Vert u_h \Vert_{\mtsd}}
    \geq \tilde{c}_* (1+ \vert \log h \vert)^{-2} \Vert v_h \Vert_{\mtsd}.
    \label{eq:DISBV5}
\end{align} 

\end{proof}

%%%%%%%%%%%%%%%%%%%%%%%%%%%%%%%%%%%%%%%%%%%%%%%%%%%%%%%%%%%
\subsubsection{Condition number estimates}

Although GMRES convergence estimates do not rely only on spectral 
condition numbers, it is often a useful piece of information in the 
context of operator preconditioning because it gives us a simple criteria 
to preserve stability, study asymptotic behaviours and to compare our 
preconditioning results with what is known in the literature for simple screens. Moreover, 
as we will see later, it will help us provide criteria to choose 
smaller discrete spaces that are still amenable to efficient preconditioning.

\begin{theorem}
\label{thm:condCaldPrecW}
Let $\VW_{\kappa,h}$ be the Galerkin matrix corresponding to $\OW_{\kappa}$
discretized over $\besmtp \subset \mtsd$, and $\VM$ the Galerkin matrix of the 
duality pairing for $\besmtp\times \besmtd$ as chosen above.

Assume that there exists an operator $\OR_h^+ \, : \, \mtsd \to \besmtp$ such that
\begin{itemize}
 \item $\OR_h^+$ is a h-uniformly bounded projection
 \item $\OR_h^+ ( \stsd ) \subseteq  \besstp. $
\end{itemize}

Then, under the mesh conditions from Assumption~\ref{asm:MeshAssumption}, we have
\begin{align}
    \kappa_{sp}(\VM^{-1} \VB_{\kappa,h}^{\OV}\VM^{-T} \VW_{\kappa,h}) 
    \leq (1 + \vert \log h \vert)^2\dfrac{\alpha_{\OM}^2
\Vert \OW_{\kappa} \Vert \Vert \OV_{\kappa} \Vert}{\Vert \OM \Vert^2 \alpha_{\OB} \alpha_{\ba}},
\end{align}
where $\Vert \Vert$ denotes operator norms and $\alpha_(\cdot)$ 
corresponding inf-sup constants.
\end{theorem}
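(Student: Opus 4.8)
The plan is to follow the standard operator-preconditioning recipe (see \cite{Hip06}), bounding the spectral condition number of the preconditioned system $\VM^{-1} \VB_{\kappa,h}^{\OV}\VM^{-T} \VW_{\kappa,h}$ by the product of a bound on the largest singular value (continuity) and the reciprocal of a bound on the smallest singular value (a discrete inf-sup estimate). Concretely, I would write
\begin{align*}
  \kappa_{sp}(\VM^{-1} \VB_{\kappa,h}^{\OV}\VM^{-T} \VW_{\kappa,h})
  \leq \frac{\Vert \VM^{-1} \VB_{\kappa,h}^{\OV}\VM^{-T} \VW_{\kappa,h}\Vert_{2}}
            {\lambda_{\min}\text{-type bound}},
\end{align*}
then estimate numerator and denominator separately using the abstract framework of operator preconditioning: the numerator is controlled by the continuity constants $\Vert\OW_\kappa\Vert$, $\Vert\OV_\kappa\Vert$ and $\Vert\OM\Vert$ (the latter being the norm of the duality pairing Gram operator), while the denominator is controlled by the three inf-sup constants $\alpha_{\ba}$ (for $\OW_\kappa$ on the primal multi-trace side), $\alpha_{\OB}$ (for $\VB_{\kappa,h}^{\OV}$, which is exactly Proposition~\ref{prop:DisInfSupBV} and contributes the $(1+|\log h|)^{-2}$ factor), and $\alpha_{\OM}$ (the inf-sup constant of the stable duality pairing on $\besmtp\times\besmtd$, which by Remark~\ref{rem:stability} is $h$-uniform).

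The key steps, in order, are: (i) recall the general condition-number estimate from operator preconditioning, which says that if $\VW_{\kappa,h}$, $\VB_{\kappa,h}^{\OV}$ satisfy discrete inf-sup conditions with constants $\alpha_{\ba}$, $\alpha_{\OB}$ and continuity constants $\Vert\OW_\kappa\Vert$, $\Vert\OV_\kappa\Vert$, and if the duality pairing matrix $\VM$ satisfies $\alpha_{\OM}\Vert x_h\Vert \leq \sup_{y_h}\frac{|\ll x_h,y_h\gg|}{\Vert y_h\Vert}$ and $\Vert\VM\Vert\leq\Vert\OM\Vert$, then the preconditioned operator has condition number bounded by $\frac{\Vert\OM\Vert^{-2}\cdots}{\cdots}$ — I would cite the relevant theorem (e.g.\ along the lines of \cite[Thm.~2.1]{Hip06} or its quotient-space variant) and verify its hypotheses; (ii) invoke Corollary~\ref{cor:DisInfSupQSpace} to supply the discrete inf-sup condition for $\OW_\kappa$ — but here is the subtlety: that corollary only gives stability on the \emph{jump} space $\jspd$, not on the full multi-trace space $\mtsd$, so I must explain why this suffices. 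This is where the hypothesis on $\OR_h^+$ enters: the $h$-uniformly bounded projection $\OR_h^+:\mtsd\to\besmtp$ with $\OR_h^+(\stsd)\subseteq\besstp$ descends to a bounded projection on the quotient, allowing one to transfer the jump-space inf-sup condition to a statement about consistent right-hand sides on the multi-trace discretization (this is precisely the Fortin-operator mechanism adapted to the quotient-space setting); (iii) plug in Proposition~\ref{prop:DisInfSupBV} for the $\VB_{\kappa,h}^{\OV}$ inf-sup constant, which carries the $(1+|\log h|)^{-2}$; (iv) use Remark~\ref{rem:stability} for the $h$-uniform stability of $\VM$; and (v) assemble the pieces, tracking that only one factor of $(1+|\log h|)$ appears squared — note the statement has $(1+|\log h|)^2$, not to the fourth power, so the $\log$ factors from $\OB$ and from the inf-sup of $\ba$ must combine in a way that does not compound, presumably because $\alpha_{\ba}$ in the final bound is the $h$-\emph{independent} constant $\alpha_{\OW_\kappa}$ from Corollary~\ref{cor:DisInfSupQSpace} and the sole logarithmic loss is the one in Proposition~\ref{prop:DisInfSupBV}.

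The main obstacle I anticipate is step (ii): reconciling the fact that $\OW_\kappa$ has a large nullspace ($\besstp$) on the multi-trace discretization with the requirement, in the classical operator-preconditioning theorem, of a genuine (injective-with-inf-sup) discrete operator. The resolution must be that the whole analysis is carried out ``modulo the single-trace nullspace'': one works with the quotient spaces $\besjp = \besmtp/\besstp$ and $\besjd = \besmtd/\besstd$, on which $\OW_\kappa$ and $\OV_\kappa$ genuinely satisfy inf-sup conditions (by Corollary~\ref{cor:DisInfSupQSpace} and Proposition~\ref{prop:DisInfSupBV} respectively), and the projection $\OR_h^+$ is exactly the device that guarantees the quotient of $\VM$ remains a stable pairing between $\besjp$ and $\besjd$ — so the spectral condition number of the preconditioned operator, understood as acting on the complement of the common nullspace, inherits the product bound. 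I would therefore structure the proof as: first reduce to the jump spaces using $\OR_h^+$ and Assumption~\ref{asm:MeshAssumption}, then apply the abstract estimate verbatim on the jump spaces, and finally collect constants. A secondary, more routine obstacle is bookkeeping the various continuity and stability constants so that the final expression matches the claimed $\frac{\alpha_{\OM}^2\Vert\OW_\kappa\Vert\Vert\OV_\kappa\Vert}{\Vert\OM\Vert^2\alpha_{\OB}\alpha_{\ba}}$ exactly, including the somewhat unusual placement of $\alpha_{\OM}^2$ in the numerator and $\Vert\OM\Vert^2$ in the denominator, which is characteristic of the two-sided use of the (non-$L^2$-orthonormal) dual mesh pairing.
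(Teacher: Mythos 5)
Your proposal is correct and follows essentially the same route as the paper: the extreme-eigenvalue/operator-preconditioning bound with the $h$-uniform inf-sup constant for $\OW_{\kappa}$ from Corollary~\ref{cor:DisInfSupQSpace}, the $(1+\vert\log h\vert)^{-2}$ inf-sup for $\VB_{\kappa,h}^{\OV}$ from Proposition~\ref{prop:DisInfSupBV} as the sole source of the logarithmic factor, the stable duality pairing of Remark~\ref{rem:stability}, and the single-trace nullspace handled by passing to quotient spaces. Your step (ii) is realized in the paper exactly through $\OR_h^+$, namely via the equivalence of the continuous and discrete quotient norms (Lemma~\ref{lem:normEquivalenceD}) which feeds the quotient-space inf-sup lemma of Appendix~\ref{app:CondNumEstimates} — i.e.\ the transfer mechanism you describe, rather than a stability statement for $\VM$ restricted to the jump spaces.
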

\begin{proof}
Given the inf-sup constants from Corollary~\ref{cor:DisInfSupQSpace} and 
Proposition~\ref{prop:DisInfSupBV}, and the norm equivalences 
from Lemmas \ref{lem:normEquivalenceD} and \ref{lem:normEquivalenceN} shown in 
Appendix~\ref{app:NormEquiv}, the result follows from the derivation in
Appendix~\ref{app:CondNumEstimates}.
\end{proof}

%%%%%%%%%%%%%%%%%%%%%%%%%%%%%%%%%%%%%%%%%%%%%%%%%%%%%%%%%%%
\subsection{Preconditioning the Weakly singular operator}
\label{ssec:precV}

Now we are interested in the Dirichlet variational problem \eqref{eq:varV}, where we have $\smtp=\mtsn$, $\stp = \stsn$, and $\sjp = \jspn$ for the primal spaces; and 
$\smtd=\mtsd$, $\std = \stsd$, and $\sjd =\jspd$ for the dual ones.

This time, we chose these BE spaces as 
\begin{itemize}
\item $\besmtp = \mathcal{S}^{0,-1}(\meshsymb_h)$: piecewise constant 
functions on $\meshsymb_h$,
\item $\besmtd = \mathcal{S}^{1,0}(\check{\meshsymb}_h)$: the piecewise linear, 
continuous functions on $\check{\meshsymb}_h$ as built in \cite{BuC07},
\end{itemize}
Moreover, the duality pairing $\ll \cdot, \cdot \gg$ preserves the %following 
duality $    \besmtd = \left( \besmtp \right)^\prime$.

In analogy to what we discussed in subsection~\ref{ssec:precW}, we have that 
standard Calder\'on preconditioning, i.e. using $\OW_{\kappa}$ to precondition 
$\OV_{\kappa}$ will not work. Hence, we will again consider a block diagonal
Calder\'on preconditioner:
\begin{align}
    \VB_{\kappa,h}^{\OW} := \left(\begin{array}{c c c}
    \check{\VW}_{\kappa,1} & \mathbf{0} & \mathbf{0} \\
    \mathbf{0} &\check{\VW}_{\kappa,2} &  \mathbf{0}  \\
    \mathbf{0} &\mathbf{0} &\check{\VW}_{\kappa,3}\\
    \end{array}\right),
    \label{eq:BW}
\end{align}
where $\check{\VW}_{\kappa,l}[i,j]=\langle \OW_{\kappa} \check{\psi}_j, 
\check{\psi}_i\rangle_{\inter_{l}}$ with $\check{\varphi}_i,\check{\varphi}_j$ in the standard basis for $\mathcal{
S}^{1,0}(\check{\inter}_{l,h})$ for $l=1,2,3$. Then, we can show
\begin{proposition}
\label{prop:DisInfSupBW}
Let $\OB_{\kappa,h}^{\OW} \,:\, \besmtd \to \besmtp$ be the linear operator 
corresponding to $\VB_{\kappa,h}^{\OW}$ defined in \eqref{eq:BW}. For the discrete spaces defined in this Subsection, 
we have that for all $h \leq h_0$ it holds that 
\begin{align}
    \sup_{u_h\in\besmtd\setminus\{0\}} 
    \dfrac{\vert \ll \OB_{\kappa,h}^{\OW} \varphi_h, \mu_h \gg \vert}{\Vert \mu_h \Vert_{\mtsn}}
    \geq \alpha_{\OB_{\OW}} (1 + \vert \log h \vert)^{-2} \Vert \varphi_h \Vert_{\mtsn}
\end{align}
for all $\varphi_h \in \mtsn$, and with $\alpha_{\OB_{\OW}}>0$ independent of $h$. 
\end{proposition}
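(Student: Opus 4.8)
The plan is to mirror the proof of Proposition~\ref{prop:DisInfSupBV} verbatim, simply interchanging the roles of the hypersingular and weakly singular operators, and of the Dirichlet and Neumann trace spaces. Concretely, by the block-diagonal structure of $\VB_{\kappa,h}^{\OW}$ and the direct-product construction $\besmtd = \prod_{l=1}^{3}\mathcal{S}^{1,0}(\check{\inter}_{l,h})$, $\besmtp = \prod_{l=1}^{3}\mathcal{S}^{0,-1}(\inter_{l,h})$, the supremum over $u_h\in\besmtd$ of $\vert\ll \OB_{\kappa,h}^{\OW}\varphi_h, u_h\gg\vert/\Vert u_h\Vert_{\mtsn}$ decouples into a sum $\sum_l \langle \OW_{\kappa}\varphi_{hl}, u_{hl}\rangle_{\inter_l}$ over the simple screens $\inter_l$, where $\varphi_{hl}, u_{hl}$ denote the restrictions to $\inter_l$. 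Here one must be careful: the operator being tested is $\OW_\kappa$ on each dual cell block, but the object it is applied to, $\varphi_h$, lives in the \emph{primal} (piecewise-constant) multi-trace space, so I should double-check that the indexing in \eqref{eq:BW} is consistent with the statement — the natural reading is that $\OB_{\kappa,h}^{\OW}$ realizes the dual hypersingular matrix acting between $\besmtd$ and $\besmtp$, so that $\ll \OB_{\kappa,h}^{\OW}\varphi_h,u_h\gg$ reduces to block-wise $\OW_\kappa$-pairings on $\check\inter_{l,h}$.

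First I would invoke the G\aa{}rding inequality for $\OW_\kappa$ on each simple screen $\inter_l$ (the classical result for simple screens, exactly as used in Proposition~\ref{prop:DisInfSupBV}), with compact perturbation $\OT_{\OW,l}$ and $\OD_{\OW,l} := \OW_\kappa + \OT_{\OW,l}$. Then, given $\varphi_h$, I choose the test function $u_h$ blockwise by $u_{hl} = \varphi_{hl} - \OD_{\OW,l}^{-1}\OT_{\OW,l}\varphi_{hl}$, and apply the standard Schatz-type argument (as in \cite[Theorem~8.11]{steinBook}) to obtain, for $h$ below some threshold $\tilde h_l$,
\begin{align*}
 \langle \OW_{\kappa}\varphi_{hl}, u_{hl}\rangle_{\inter_l}
 \;\geq\; \tilde c_l \,\Vert \varphi_{hl}\Vert_{\widetilde H^{1/2}(\inter_l)}\,
 \Vert u_{hl}\Vert_{\widetilde H^{1/2}(\inter_l)}.
\end{align*}
Summing over $l$, using Polya--Szeg\H{o} to convert $\sum_l \Vert\varphi_{hl}\Vert\,\Vert u_{hl}\Vert$ into a product of sums, and then applying the relevant inverse inequality (the analogue of Lemma~\ref{lem:invIneqDir} for the Neumann side, presumably Lemma~\ref{lem:invIneqNeu} in the appendix) to pass from $\widetilde H^{1/2}$-norms on the pieces to the $\mtsn$-norm at the cost of a factor $(1+\vert\log h\vert)^{-2}$, yields the claimed bound with $\alpha_{\OB_{\OW}} := \tilde c_*$ and $h_0 := \min_l \tilde h_l$.

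The one genuinely new point, compared to Proposition~\ref{prop:DisInfSupBV}, is the norm bookkeeping on the dual side: here $\varphi_h$ is piecewise constant and $u_h$ is the Buffa--Christiansen-type piecewise linear space on the barycentrically refined dual mesh, so I need the dual-mesh analogues of the norm equivalences (Lemmas~\ref{lem:normEquivalenceD} and \ref{lem:normEquivalenceN}) and the $H^{-1/2}$-type inverse inequality with the $(1+\vert\log h\vert)$ factor, which is the screen-edge resolution estimate. I expect this to be the main obstacle, or at least the only place requiring care: verifying that the $L^2$-duality between $\mathcal{S}^{0,-1}(\inter_{l,h})$ and $\mathcal{S}^{1,0}(\check\inter_{l,h})$ — which restricts to the dual cells of interior nodes only, so that dimensions match and boundary degrees of freedom are excluded — is uniformly stable on each $\inter_l$, and that the logarithmic inverse inequality applies to this space. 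Granting those appendix lemmas (which the excerpt promises), the argument is otherwise a line-by-line transcription of the preceding proof.
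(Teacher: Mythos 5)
Your proposal is essentially identical to the paper's argument: the paper proves this proposition simply by declaring it analogous to Proposition~\ref{prop:DisInfSupBV}, with the only change being that the inverse inequality of Lemma~\ref{lem:invIneqNeu} replaces that of Lemma~\ref{lem:invIneqDir}, which is exactly the line-by-line transcription (block decoupling, G\aa{}rding/Schatz-type choice of test function on each $\inter_l$, Polya--Szeg\"o, then the logarithmic inverse inequality) that you describe. Your added caution about the stability of the duality pairing on the dual spaces is reasonable but is handled elsewhere in the paper (Remark~\ref{rem:stability}) and plays no role in this particular proof.
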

\begin{proof}
The proof follows is analogous to the proof of Proposition~\ref{prop:DisInfSupBV}, but 
using Lemma~\ref{lem:invIneqNeu}.
\end{proof}

Finally, following the same steps as in Theorem~\ref{thm:condCaldPrecW}, we 
arrive to the following condition number estimate:
\begin{theorem}
\label{thm:condCaldPrecV}
Let $\VV_{\kappa,h}$ be the Galerkin matrix corresponding to $\OV_{\kappa}$
discretized over $\besmtp \subset \mtsn$, and $\VM$ the Galerkin matrix of the 
duality pairing for $\besmtp\times \besmtd= \mathcal{S}^{0,-1}(\meshsymb_h)\times
\mathcal{S}^{1,0}(\check{\meshsymb}_h)$.

Assume that there exists an operator $\OR_h^- \, : \, \mtsn \to \besmtp$ such that
\begin{itemize}
 \item $\OR_h^-$ is a h-uniformly bounded projection
 \item $\OR_h^- ( \stsn ) \subseteq  \besstp. $
\end{itemize}

Then, under mesh conditions from Assumption~\ref{asm:MeshAssumption}, we have
\begin{align}
    \label{eq:boundonkappa}
    \kappa_{sp}(\VM^{-1} \VB_{\kappa,h}^{\OW}\VM^{-T} \VV_{\kappa, h}) 
    \leq (1 + \vert \log h \vert)^2\dfrac{\alpha_{\OM}^2
\Vert \OW_{\kappa} \Vert \Vert \OV_{\kappa} \Vert}{\Vert \OM \Vert^2 \alpha_{\OB_{\OW}} \alpha_{\ba_{\OV}}}.
\end{align}
\end{theorem}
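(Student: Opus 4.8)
The plan is to run, essentially line by line, the argument behind Theorem~\ref{thm:condCaldPrecW} with the weakly singular and hypersingular roles — and the primal and dual meshes — interchanged. Here the primal discrete space is $\besmtp = \mathcal{S}^{0,-1}(\meshsymb_h)$, the dual one is $\besmtd = \mathcal{S}^{1,0}(\check{\meshsymb}_h)$, the operator being preconditioned is $\OV_\kappa$ with sesquilinear form $\ba_{\OV}$, and the preconditioner is the block-diagonal operator $\OB_{\kappa,h}^{\OW}$ of \eqref{eq:BW} acting $\besmtd \to \besmtp$. Since $\OV_\kappa : \mtsn \to \mtsd$ and $\OW_\kappa : \mtsd \to \mtsn$ are bounded, the continuity constants $\Vert\OV_\kappa\Vert$ and $\Vert\OW_\kappa\Vert$ entering the numerator of \eqref{eq:boundonkappa} are immediate, and by Remark~\ref{rem:stability} the pairing matrix $\VM$ for $\mathcal{S}^{0,-1}(\meshsymb_h)\times\mathcal{S}^{1,0}(\check{\meshsymb}_h)$ is bounded and boundedly invertible, which supplies the constants $\Vert\OM\Vert$ and $\alpha_{\OM}$.

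Next I would assemble the two discrete inf-sup estimates. For the preconditioner, Proposition~\ref{prop:DisInfSupBW} already delivers a discrete inf-sup on $\besmtd\times\besmtp$ with constant $\alpha_{\OB_{\OW}}(1+\vert\log h\vert)^{-2}$; this is the only place a logarithmic factor is born, and it is exactly the factor appearing in \eqref{eq:boundonkappa}. For $\ba_{\OV}$ itself, Corollary~\ref{cor:DisInfSupQSpace}(ii) furnishes a discrete inf-sup with an $h$-independent constant $\alpha_{\OV_\kappa}=\alpha_{\ba_{\OV}}$, but only on the \emph{jump} space $\besjp = \besmtp/\besstp$, since $\OV_\kappa$ vanishes on $\stsn\supseteq\besstp$. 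To transfer this to the full multi-trace discrete level I would invoke the hypothesised $h$-uniformly bounded projection $\OR_h^-:\mtsn\to\besmtp$ with $\OR_h^-(\stsn)\subseteq\besstp$: precomposition with $\OR_h^-$ realises the quotient $\besmtp/\besstp$ inside $\besmtp$ without inflating norms beyond a uniform constant, and it respects consistency of the right-hand side (which annihilates $\besstd$). The Dirichlet- and Neumann-side norm equivalences of Lemmas~\ref{lem:normEquivalenceD} and \ref{lem:normEquivalenceN} then let me pass between the quotient norms on $\besjp,\besjd$ and the multi-trace norms on $\besmtp,\besmtd$ compatibly with these inf-sup bounds.

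With all constants in hand, I would substitute them into the abstract operator-preconditioning estimate (see \cite{Hip06,steinBook}), which bounds $\kappa_{sp}(\VM^{-1}\VB_{\kappa,h}^{\OW}\VM^{-T}\VV_{\kappa,h})$ by the product of the continuity constants of $\ba_{\OV}$, of $\OB_{\kappa,h}^{\OW}$ and of the pairing, divided by the product of their inf-sup constants. Inserting $\alpha_{\OB_{\OW}}(1+\vert\log h\vert)^{-2}$ for the preconditioner's inf-sup pulls out the factor $(1+\vert\log h\vert)^2$ and yields precisely \eqref{eq:boundonkappa}. The mesh conditions in Assumption~\ref{asm:MeshAssumption} are needed exactly so that the barycentric dual mesh, the inverse inequality underlying Proposition~\ref{prop:DisInfSupBW} (Lemma~\ref{lem:invIneqNeu}), and the norm equivalences all hold with $h$-uniform constants.

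The main obstacle, as in Theorem~\ref{thm:condCaldPrecW}, is the mismatch between where the inf-sup conditions live and where the matrices act: both $\OV_\kappa$ and $\OB_{\kappa,h}^{\OW}$ are singular on the single-trace subspaces, so the sharp inf-sup estimates are quotient-space statements, whereas $\VV_{\kappa,h}$, $\VB_{\kappa,h}^{\OW}$ and $\VM$ are assembled on the full discrete multi-trace spaces. Making the projection $\OR_h^-$ and the norm equivalences interlock so that the single-trace directions are harmlessly factored out — while keeping every constant independent of $h$ — is the delicate point; once the appendix lemmas are in place, the remaining bookkeeping is identical to the hypersingular case, with only the mesh roles swapped and Lemma~\ref{lem:invIneqDir} replaced by Lemma~\ref{lem:invIneqNeu}.
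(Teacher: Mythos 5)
Your proposal is correct and follows essentially the same route as the paper: the paper proves this theorem by repeating the argument of Theorem~\ref{thm:condCaldPrecW} with the roles swapped, i.e.\ combining the jump-space inf-sup of Corollary~\ref{cor:DisInfSupQSpace}(ii), the preconditioner inf-sup of Proposition~\ref{prop:DisInfSupBW} (the sole source of the $(1+\vert\log h\vert)^2$ factor), the norm equivalences obtained from the projection $\OR_h^-$, and the abstract $\lambda_{\max}/\lambda_{\min}$ bounds of Appendix~\ref{app:CondNumEstimates}. Your identification of where each constant and the logarithmic factor enters matches the paper's derivation.
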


\begin{remark}
    It is worth pointing out that, although we do not discuss the existence of projection operators $\OR_h^+$ and $\OR_h^-$ in this article, this is a reasonable assumption for us to make. Indeed, the operator $\OR_h^+$ was built in \cite{Ave22} and a similar approach may be possible to construct $\OR_h^-$.
\end{remark}

%%%%%%%%%%%%%%%%%%%%%%%%%%%%%%%%%%%%%%%%%%%%%%%%%%%%%%%%%%%%%%%%%%%%%%%%%%%%%%
\section{Calder\'on preconditioning on Reduced Quotient-Space BEM}
\label{sec:CaldPreRed}

The above analysis has been carried out for the case where the finite element space is chosen to approximate the entire multi-trace space. Since the solution is determined in the jump space, it can be worth while to investigate whether (combinations of) degrees of freedom (DoFs) can be deleted and whether the resulting method remains amenable to operator preconditioning schemes.

In this section we will introduce several ways in which the number of DoFs can be reduced, what mileage can be expected from the resulting methods, and we discuss what the ramifications are for implementations in code of these methods.

The most straightforward approach to building a well-conditioned boundary element method on multi-screens is to introduce a finite element space for the multi-trace space that is contained in the direct product space. Two key ingredients for the success of this approach are that
\begin{itemize}
\item the discrete left/right nullspace $\besstp$ equals $\stsd \cap \besmtp$; and that
\item the quotient $\besmtp / \besstp$ approximates $\jspd$.
\end{itemize}
  However, because we are interested in finding an approximate solution in the quotient space $\jspd$, we are free to consider boundary element spaces $\besmtp^\circ$ that do not approximate all of $\mtsd$ as long as the corresponding discrete nullspace $\besstp^\circ=\stsd \cap \besmtp^\circ$ is still a subset of $\stsd \cap \besmtp$ and the quotient spaces $\besmtp^\circ / \besstp^\circ$ and $\besmtp / \besstp$ are equal. Similar choices can be made to select a reduced dual finite element space $\besmtd^\circ \subseteq \besmtd$. The quality of the resulting operator preconditioning depends on the stability of the restriction of the duality form to this subspace.

How does this work in practice? In the case of nodal elements in $\mathcal{S}_{0}^{1,0}(\meshsymb_h)$, any given basis function relates to a function in $\besstp$ by completing it with its counterpart(s) on the opposite side(s) of the multi-screen. By removing one of the basis functions from the standard nodal basis for $\besmtp^\circ$, the dimension of the discrete nullspace $\besstp^\circ$ goes down by one. The dimension of the complement of $\besstp^\circ$ remains unchanged and so necessarily $\besmtp^\circ / \besstp^\circ = \besmtp / \besstp$. To put it in more physical terminology: the reduced discrete multi-trace space $\besmtp^\circ$ \emph{radiates} the same fields as the original one.

There are a number of reduction strategies that are fairly straightforward to implement. We will discuss here three strategies that can be applied to a multi-screen $\Gamma$ comprising a single junction where an odd number $m$ simple screens meet.
\begin{itemize}

    \item[(i)] \emph{Partial reduction}: In the partition $[\Gamma] = \cup_{i=1}^{m} \inter_i$, degrees of freedom based on the terms $i=3,5,7,...$ can be discarded. This is extremely easy to implement and boils down to using $\besmtp^\circ = \mathcal{S}^{0,-1}(\inter_{1,h}) \times \prod_{i=1}^{\lfloor m/2 \rfloor} \mathcal{S}^{0,-1}(\inter_{2i,h})$ instead of $\besmtp = \prod_{i=1}^m \mathcal{S}^{0,-1}(\inter_{i,h})$.
    
        \item[(ii)] \emph{Single strip}: The partial reduction described above in essence removes the \emph{back} from part of $[\Gamma]$. This still leaves significant redundancy in $\besmtp^\circ$. In our example leaving out $\mathcal{S}^{0,-1}(\inter_{i,h})$ for $i=3,5,7,...$ still leaves all the DoFs on $\Gamma_2$ (excluding DoFs on the junction) that can be completed by DoFs on \emph{the other side} to yield functions in $\besstp$. As a result, we can further discard DoFs in $\mathcal{S}^{0,-1}(\inter_{1,h})$ that lie in the interior of $\Gamma_2$. If the implementer has access to node-triangle adjacency information this approach requires minimal coding effort. The resulting finite element space is $\besstp^\circ = \mathcal{S}^{0,-1}(\inter_{1,h})^\circ \times \prod_{i=1}^{\lfloor m/2 \rfloor} \mathcal{S}^{0,-1}(\inter_{2i,h})$, where the $\circ$ superscript on the first factor denotes that this finite element space is reduced by leaving out redundant DoFs linked to nodes that are in $\inter_{1,h}\cap\Gamma_2$ but not on the junction.
    
    \item[(iii)] \emph{Fixed overlap}: Note that the efficiency of the resulting preconditioning method depends on the lower bound for the duality form $\ll.,.\gg$ on $\besmtp^\circ \times \besmtd^\circ$, which may depend on the geometry and hence may indirectly depend on $h$ when using a single strip reduction. In those situations where this is undesirable, one can opt to leave in not only those DoFs in $\mathcal{S}^{0,-1}(\inter_{1,h})$ that are positioned on $\Gamma_1$ or on the junction, but also those on $\Gamma_2$ inside a strip within a fixed mesh independent distance from the junction. Likely, this will require manipulations to the code at the level of mesh generation. The resulting method will lead to an increasing redundancy in DoFs as $h$ tends to zero, but the user is guaranteed that the preconditioner efficiency will not be limited by degradation of the duality pairing stability.
\end{itemize}

We illustrate these three reduction strategies for $m=3$ on Figure~\ref{fig:reductions}.
We also point out that when $m$ is even, all these reductions are also valid, but since one can always find a \emph{partial reduction} that provides a minimal representation of the quotient space, i.e. $\besmtp^\circ = \prod_{i=1}^{m/2} \mathcal{S}^{0,-1}(\inter_{2i,h})$, the other two proposed strategies are not computationally attractive.

Finally, it is worth mentioning that regardless the choice of reduction method and the corresponding primal finite element space $\besmtp^\circ$, the construction of the dual finite element space $\besmtd^\circ$ remains the same. The construction goes along the lines of what is described in \cite{BuC07}, starting from the reduced surfaces and corresponding meshes
\begin{equation}
    \inter_i \cap \bigcup_{u \in \besmtp^\circ} \operatorname{supp} u
\end{equation}
This means in particular that for the Dirichlet problem, the dual space of piecewise linear, continuous elements is attains non-zero values on the boundary of the supporting reduced mesh, as detailed in \cite{BuC07}. This may seem counterintuitive but is required for the discrete stability of the duality form.

Moreover, since the discrete stability of the duality form also implies the continuity estimates \eqref{eq:contQjh} and \eqref{eq:contPih} used in our proofs. Therefore, we have that by ensuring this stability, all results in the appendix can be extended to the proposed reduced Quotient-space BEM and hence we are still within the framework of Theorems~\ref{thm:condCaldPrecW} and \ref{thm:condCaldPrecV}. For this, it is crucial to identify the reduced primal space on $\inter_{1,h}$ with the (complete) space on a truncated simple screen 
$\inter_{1,h}^\circ$, which are displayed in blue in Figures~\ref{fig:fRed} and \ref{fig:sRed}. So for example, one identifies $\mathcal{S}^{0,-1}(\inter_{1,h})^\circ$ with $\mathcal{S}^{0,-1}(\inter_{1,h}^\circ)$.
We refer the reader to Appendix~\ref{app:CondNumEstimates} for further details.

\begin{figure}
     \centering
     \begin{subfigure}[b]{0.4\textwidth}
         \centering
         \includegraphics[width=\textwidth]{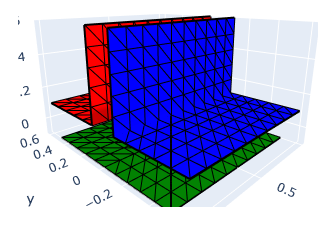}
         \caption{Full multi-trace discretisation.}
         \label{fig:noRed}
     \end{subfigure}
     \hfill
     \begin{subfigure}[b]{0.4\textwidth}
         \centering
         \includegraphics[width=\textwidth]{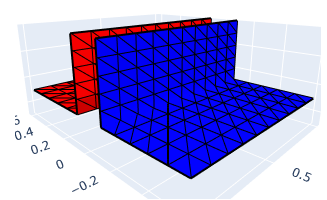}
         \caption{Partial reduction.}
         \label{fig:pRed}
     \end{subfigure}
     \hfill
     \begin{subfigure}[b]{0.4\textwidth}
         \centering
         \includegraphics[width=\textwidth]{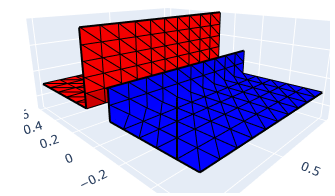}
         \caption{Fixed overlap}
         \label{fig:fRed}
     \end{subfigure}
     \hfill
     \begin{subfigure}[b]{0.4\textwidth}
         \centering
         \includegraphics[width=\textwidth]{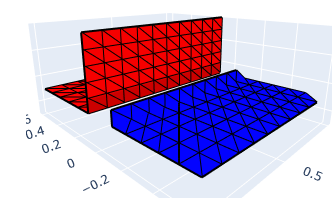}
         \caption{Single strip}
         \label{fig:sRed}
    \end{subfigure}
    \caption{Meshes illustrating full multi-trace discretization and three reduction strategies used here.}
    \label{fig:reductions}
\end{figure}

%%%%%%%%%%%%%%%%%%%%%%%%%%%%%%%%%%%%%%%%%%%%%%%%%%%%%%%%%%%
\section{Numerical Results}
%%%%%%%%%%%%%%%%%%%%%%%%%%%%%%%%%%%%%%%%%%%%%%%%%%%%%%%%%%%

\subsection{Preconditioning the hypersingular operator (Neumann problem)}
Consider the geometry in Figure~\ref{fig:reductions}. The structure is illuminated by a plane wave with signature $
    \exp \left(i\kappa x_3\right)$.
The numerical experiments will be run in the low-frequency regime ($\kappa = 1$) and the moderate frequency regime ($\kappa = 10$).

Linear systems are solved using GMRES with the tolerance set to $2.0e-5$. To build the preconditioners, application of the inverse Gram matrix is required. This action is computed by running a second, inner GMRES solver within the outer, primal solver. Numerical experiments have shown that it is important to set the tolerance for this inner GMRES sufficiently low. In the experiments presented here the tolerance is set to $2.0e-12$. Fortunately the Gram matrices are well conditioned and application of their inverses through GMRES can be computed in a small and linear number of operations, even at these very small tolerances.

Another important aspect of implementing the preconditioning strategies presented above is the use of high quality quadrature rules, especially for interactions between geometric elements that are close together. Specifically, it is important that left and right nullspaces of the discrete bilinear forms are invariant upon introduction of the quadrature error. One can either choose to adopt highly accurate quadrature rules or to use rules that are symmetric with respect to back-front mirroring across the multi-screen. Here we have opted for the highly accurate and kernel independent Sauter-Schwab rules described in Chapter 5 of \cite{SaS11}.

\begin{figure}[h]
    \centering
    \includegraphics[width=0.6\linewidth]{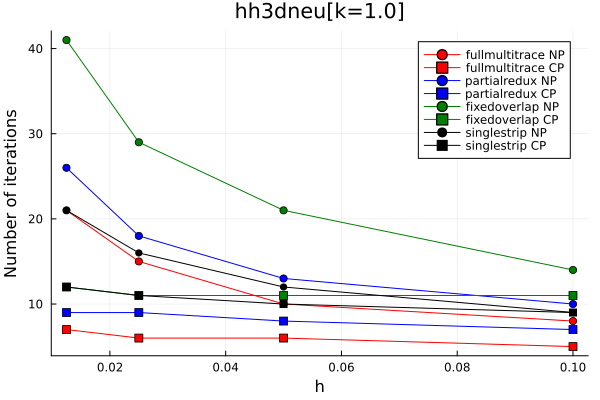}
    \caption{GMRES iterations vs $h$ at $\kappa = 1$ for the Neumann problem at $\Gamma$ as in Figure~\ref{fig:reductions}.}
    \label{fig:NeumannLF}
\end{figure}

\begin{figure}[h]
    \centering
    \includegraphics[width=0.6\linewidth]{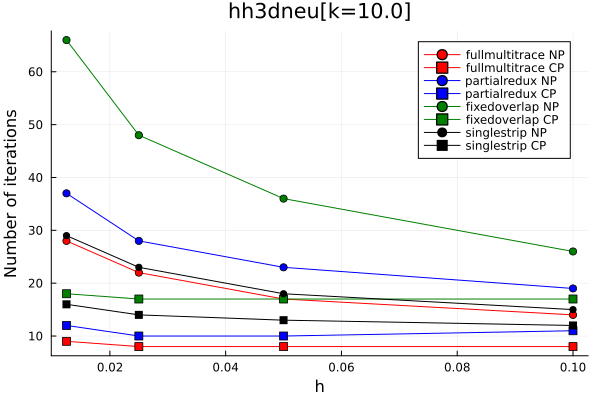}
    \caption{GMRES iterations vs $h$ at $\kappa = 10$ for the Neumann problem at $\Gamma$ as in Figure~\ref{fig:reductions}.}
    \label{fig:NeumannHF}
\end{figure}

The obtained results are displayed in Figure~\ref{fig:NeumannLF}
for $\kappa = 1$ and in Figure~\ref{fig:NeumannHF} for $\kappa = 10$. There we label iteration counts for the unpreconditioned system by NP, and those for after Calder\'on preconditioning by CP. In both the low frequency and the moderate frequency case we find a much smaller number of iterations is required upon application of our operator preconditioning approach. For all four considered reductions of the $\besmtp$ depicted in Figure \ref{fig:reductions}, there is a clear improvement. After preconditioning there remains a slow increase in the number of iterations, commensurate with the logarithmic grow in the \eqref{eq:boundonkappa}.

At moderate frequencies, both the original system and the preconditioned system require more iterations, but the benefits of applying operator preconditioning remain.

%%%%%%%%%%%%%%%%%%%%%%%%%%%%%%%%%%%%%%%%%%%%%%%%%%%%%%%%%%%
\subsection{Preconditioning the weakly singular operator (Dirichlet problem)}

We consider the same geometry, excitation and GMRES tolerance used to study our preconditioner for the Neumann problem. 
An important difference is that DoFs for the Dirichlet problem are linked to triangles of the mesh, as opposed to vertices. The support of the primal basis functions spans only a single triangle. The reduction of the multi-trace space can be done up to the point where there is no overlap between the simple screens that support the reduced finite element spaces.

\begin{figure}[h]
    \centering
    \includegraphics[width=0.6\linewidth]{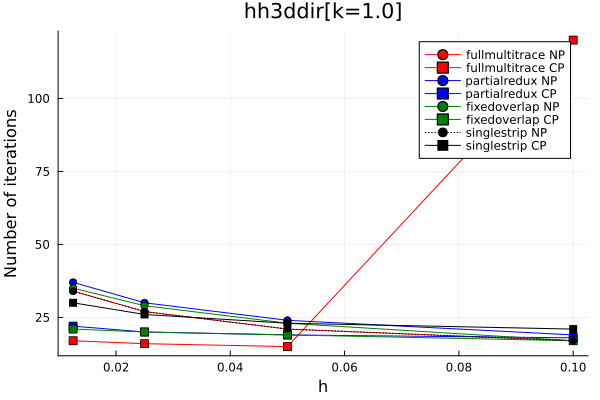}
    \caption{GMRES iterations vs $h$ at $\kappa = 1$ for the Dirichlet problem at $\Gamma$ as in Figure~\ref{fig:reductions}.}
    \label{fig:DirichletLF}
\end{figure}

\begin{figure}[h]
    \centering
    \includegraphics[width=0.6\linewidth]{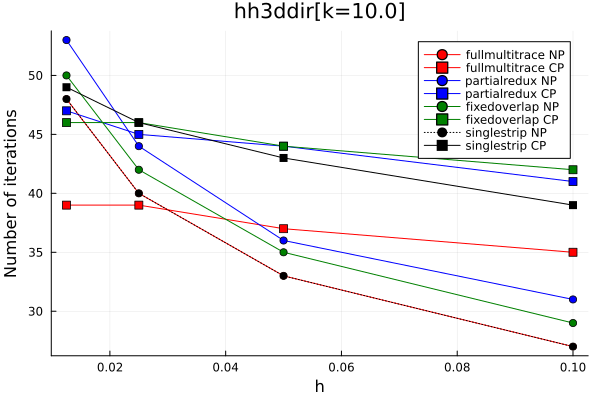}
    \caption{GMRES iterations vs $h$ at $\kappa = 10$ for the Dirichlet problem at $\Gamma$ as in Figure~\ref{fig:reductions}.}
    \label{fig:DirichletHF}
\end{figure}

Essentially all conclusions drawn for the Neumann problem carry over to the study of the numerical solution of the Dirichlet problem. In Figure~\ref{fig:DirichletLF} and Figure~\ref{fig:DirichletHF} it can be seen that at both frequencies and for all reduction strategies there is a clear decrease in the number of iterations required for solution. At moderate frequencies the performance of our preconditioner is less outspoken than for the Neumann problem. In fact, for the specific choice of the most aggressive reduction scheme the unpreconditioned system requires fewer iterations than the preconditioned system for all values of the mesh size $h$ we have investigated. Nevertheless, the trend in the corresponding lines in Figure~\ref{fig:DirichletHF} is such that a cross-over point is to be expected at only modestly smaller values for $h$.

\subsection{Application to multi-screens of Type B}

\begin{figure}
    \centering
    \includegraphics[width=0.4\textwidth]{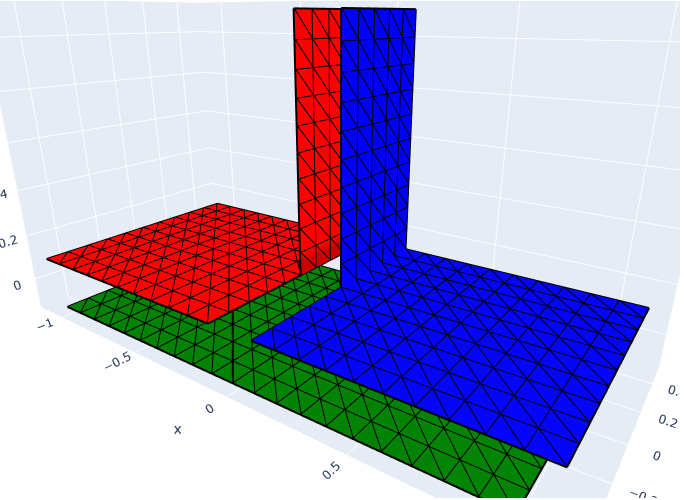}
    \includegraphics[width=0.4\textwidth]{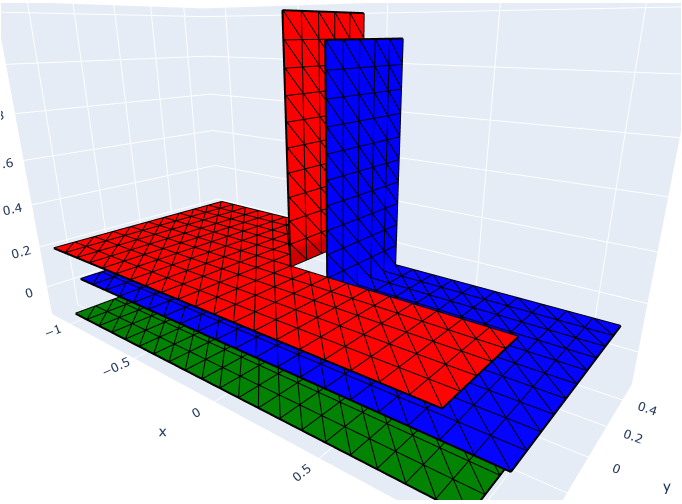}
    \caption{Two possible coverings for $[\Gamma]$. The most economic covering on the left precludes the definition of finite element spaces of direct product type (left). Allowing part of $[\Gamma]$ to be multiply covered resolves this problem (right).}
    \label{fig:geo_special}
\end{figure}

For multi-screens of Type B, slight modifications to the choice of finite element spaces are required in order to arrive at a linear system requiring only few iterations for its solution. It may seem most natural to write $[\Gamma]$ as the union of the simple screens depicted on the left in Figure~\ref{fig:geo_special}. Unfortunately, this partitioning does not allow the construction of a finite element space $\besmtp$ that can be written as the direct product of finite element spaces supported by the $\inter_i$. The issue is that the solution for the Neumann problem in general will not be in $\prod_i \widetilde{H}^{1/2}(\inter_i)$ and that as a result degrees of freedom along the segment from $(0,0,0)$ to $(0,-0.5,0)$ cannot be discarded.

Allowing overlapping coverings of $[\Gamma]$ as depicted on the right of Figure~\ref{fig:geo_special} resolves this problem. For $l=1,..,m$, let $\bar{\inter}_{l}$ denote either $\inter_l$ with overlap when needed, or without overlap. We can use the finite element space $\prod_{i=1}^m \mathcal{S}^{1,0}(\bar{\inter}_{i,h})$, which in a sense is larger than what we need but still leads to the correct quotient space.

We use this approach to solve the Neumann problem for the geometry in Figure~\ref{fig:geo_special} and for excitation $\exp(-i\kappa x_3)$ with $\kappa=10$. Figure~\ref{fig:hh3dneu_augmented} demonstrates that upon preconditioning the number of iterations is much lower than what is required to solve the original linear system when solving the Neumann problem.

Figure~\ref{fig:hh3ddir_augmented} shows thee results for the Dirichlet problem. They are in line with those from Figigure~\ref{fig:DirichletHF}: the higher offset in the iteration count results in a cross-over point at smaller values of $h$, but asymptotically the preconditioner leads to a more efficient algorithm.

The numerical results presented in this section have been produced with the boundary element package BEAST.jl\footnote{\texttt{https://github.com/krcools/BEAST.jl}}. The scripts to reproduce them cam be found in a public Github repository\footnote{\texttt{https://github.com/krcools/Junctions\_KC\_CUT.jl}}.

\begin{figure}[h]
    \centering
    \includegraphics[width=0.6\textwidth]{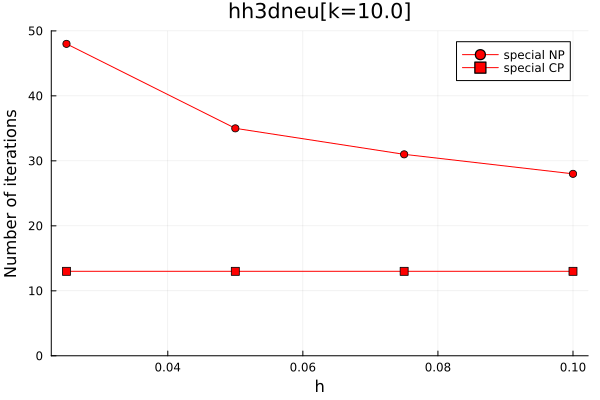}
    \caption{GMRES iterations vs $h$ at $\kappa = 10$ for the Neumann problem
    for scattering by a geometry of type B.}
    \label{fig:hh3dneu_augmented}
\end{figure}

\begin{figure}[h]
    \centering
    \includegraphics[width=0.6\textwidth]{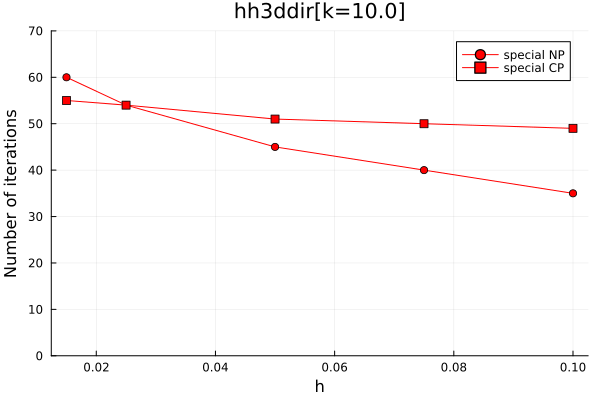}
    \caption{GMRES iterations vs $h$ at $\kappa = 10$ for the Dirichlet problem
    for scattering by a geometry of type B.}
    \label{fig:hh3ddir_augmented}
\end{figure}

%%%%%%%%%%%%%%%%%%%%%%%%%%%%%%%%%%%%%%%%%%%%%%%%%%%%%%%%%%%
\section{Conclusions}
%%%%%%%%%%%%%%%%%%%%%%%%%%%%%%%%%%%%%%%%%%%%%%%%%%%%%%%%%%%

We have presented an effective Calder\'on-type preconditioner 
for Helmholtz equations at multi-screens that builds on quotient-space 
BEM and operator preconditioning. Moreover, we have 
proved and confirmed numerically that it performs as 
standard Calder\'on preconditioning does on simple screens. 

From a computational point of view, quotient-space
BEM considering the full discretization of multi-valued traces 
has the advantage of requiring minimal geometrical information 
but the disadvantage of "doubling DoFs". As an alternative, we 
proposed different strategies to work with reduced multi-trace 
discretizations that use less DoFs but require more adaptations when using a standard BEM code. We gave details regarding 
the additional data requirements in the implementation of all 
these strategies, and used the developed 
framework to identify the requirements that reduced spaces need 
to meet in order to still have efficient Calder\'on-type preconditioning.

Finally, we briefly presented an heuristic strategy to precondition multi-screens that also appear in applications but that are not covered by our 
theory. Although in essence our approach follows the same principles of our
Calder\'on-type preconditioner for type A multi-screens, rigorous analysis has been elusive and therefore has not been treated in this article. Indeed, the key missing piece is an extension of Lemma~8 for this case. Nevertheless, we offered numerical experiments to investigate its effectivity.

Current and future work also involves extending the analysis of this preconditioning approach to Maxwell equations, where numerical results 
are promising \cite{CUT22}.

\subsubsection*{Funding}
This project has received funding from the European Research Council (ERC) under 
the European Union’s Horizon 2020 research and innovation programme 
(Grant agreement No. 101001847) and from  the Dutch Research Council (NWO) under the NWO-Talent Programme Veni with the project number VI.Veni.212.253.

\bibliographystyle{plain}
\bibliography{biblio}

%%%%%%%%%%%%%%%%%%%%%%%%%%%%%%%%%%%%%%%%%%%%%%%%%%%%%%%%%%%%%%%%%%%%%%%%%%%%%%%%
\appendix
\section{Auxiliary Lemmas}

In this Section we prove some auxiliary results that hold for the multi-screens 
under consideration. We remark that for this we follow the cue from 
\cite[Section~5.2]{ClH13} and use the properties of the associated volume-based 
spaces.

Let us begin by noticing that the Lipschitz partition $\left( \Omega_j 
\right)_{j=0\ldots n}$ such that $\Gamma \subset \cup_{j=0}^m \partial 
\Omega_j$ is not unique. We illustrate this for a two-dimensional triple junction $\Gamma$. in 
Fig.~\ref{fig:LP}. 
Nevertheless, for the multi-screens considered in this paper, one can always 
find a Lipschitz partition such that $\Gamma\cap\partial\Omega_i\cap\partial 
\Omega_j\neq\emptyset$ for all $i,j=0,\dots,m$. In other words, we can always 
assume we have the configuration corresponding to Fig.~\ref{fig:LPMin}.
For simplicity of the proofs, this is the type of Lipschitz partitions that we 
will consider. This and the particular order of the domains is stated in the 
following:

\begin{figure}
\centering
 \begin{subfigure}[b]{0.45\textwidth}
     \includegraphics[width=\textwidth]{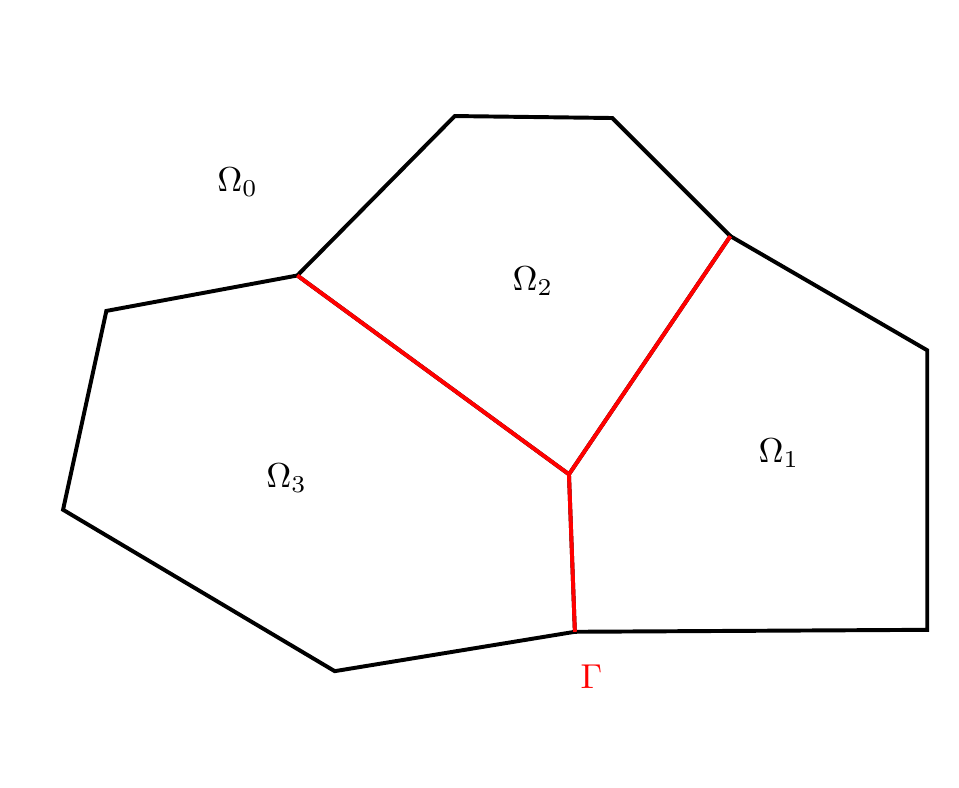}
     \caption{ }
     \label{fig:LPArb}
 \end{subfigure}
 \hfill
 \begin{subfigure}[b]{0.45\textwidth}
     \includegraphics[width=\textwidth]{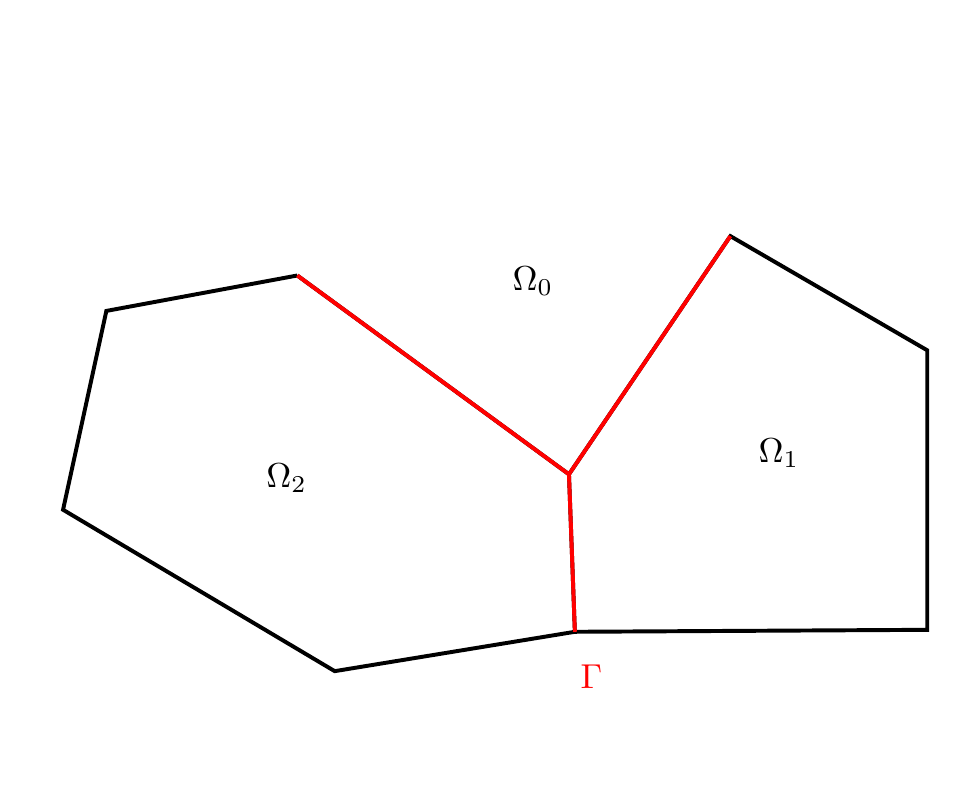}
     \caption{ }
     \label{fig:LPMin}
 \end{subfigure}
 \caption{Example of two Lipschitz partitions for $\Gamma$ being a multi-screen 
 with a triple junction.}
 \label{fig:LP}
\end{figure}

\begin{assumption}
\label{ass:ms}
Let $\left( \Omega_j \right)_{j=0\ldots n}$ be a Lipschitz partition in $\mathbb{R 
}^3$. We assume $\Gamma$ to be a multi-screen such that $\Gamma\subset\cup_{j=0 
}^m\partial \Omega_j$ and $\Gamma\cap\partial\Omega_i\cap\partial\Omega_j \neq 
\emptyset$ for all $i,j=0,\dots,m$.
Moreover, and without loss of generality, we assume that $\Gamma$ and the 
Lipschitz partition $\left( \Omega_j \right)_{j=0\ldots n}$ are such that 
$\Omega_0$ is the exterior domain. 
\end{assumption}

In order to improve readability of our auxiliary lemmas, let us define $S_i :=
\Gamma \cap \partial \Omega_i$ for $i=0,\dots,m$.
Now we are in the position to introduce the first result of this Appendix.
%-----
\begin{lemma}
\label{lem:MTinjections}
Let $\Gamma$ be a multi-screen as in Assumption~\ref{ass:ms}. Then, the 
following injections hold
\begin{align*}
\mtsd \xhookrightarrow{} H^{1/2}(S_0)\times \dots \times H^{1/2}(S_m), \\
\mtsn \xhookrightarrow{} H^{-1/2}(S_0)\times \dots \times H^{-1/2}(S_m).
\end{align*}
\end{lemma}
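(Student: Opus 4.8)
The plan is to reduce the multi-screen statement to a collection of statements about simple (Buffa--Christiansen) screens $S_i = \Gamma \cap \partial\Omega_i$, exploiting the volume-based description of the multi-trace spaces. First I would recall from \eqref{eq:multi} that $\mtsd = H^1(\R^d\setminus\overline{\Gamma})/H^1_{0,\Gamma}(\R^d)$. Under Assumption~\ref{ass:ms}, the connected components of $\R^d \setminus \overline{\Gamma}$ are (essentially) the open sets $\Omega_0,\dots,\Omega_m$ of the Lipschitz partition, so a function $v \in H^1(\R^d\setminus\overline{\Gamma})$ is nothing but a tuple $(v_0,\dots,v_m)$ with $v_j \in H^1(\Omega_j)$. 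For each $j$, since $\Omega_j$ is a Lipschitz domain, the classical Dirichlet trace operator $\gamma^j_D : H^1(\Omega_j) \to H^{1/2}(\partial\Omega_j)$ is well defined and bounded; composing with the restriction to $S_j \subset \partial\Omega_j$ gives a bounded map $H^1(\Omega_j) \to H^{1/2}(S_j)$. Assembling these componentwise yields a bounded linear map $\Theta_D : H^1(\R^d\setminus\overline{\Gamma}) \to \prod_{j=0}^m H^{1/2}(S_j)$.

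The next step is to check that $\Theta_D$ descends to the quotient, i.e. that $\Theta_D$ annihilates $H^1_{0,\Gamma}(\R^d)$. By definition, $H^1_{0,\Gamma}(\R^d)$ is the closure in $H^1(\R^d)$ of smooth functions compactly supported in $\R^d\setminus\overline{\Gamma}$; for such a function the trace on each $S_j$ from the $\Omega_j$ side vanishes, and by continuity of $\gamma^j_D$ the same holds for the whole closure. Hence $\Theta_D$ factors through a bounded map $\overline{\Theta}_D : \mtsd \to \prod_j H^{1/2}(S_j)$. The crucial point is injectivity: if $\overline{\Theta}_D(\pi_D v) = 0$, then $\gamma^j_D v_j = 0$ on $S_j$ for every $j$; one must then argue that $v$ lies in $H^1_{0,\Gamma}(\R^d)$. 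This is where the Buffa--Christiansen screen hypothesis on each $S_j$ and the structure of the partition enter: the conditions $S_j = \overline{\Gamma}\cap\partial\Omega_j$ and $\Gamma\cap\partial\Omega_i\cap\partial\Omega_j\neq\emptyset$ ensure that the $S_j$ glue along the skeleton of $\Gamma$ in such a way that vanishing of all one-sided traces forces the tuple $(v_0,\dots,v_m)$ to come from a single $H^1(\R^d)$ function with zero trace on $\Gamma$; I would invoke the characterization of $H^1_{0,\Gamma}$ from \cite[Section~5.2]{ClH13} (Proposition~5.2 there, on kernels of trace operators) rather than reprove it. The Neumann case is entirely parallel, replacing $H^1(\Omega_j)$ by $\mathbf{H}(\mathrm{div},\Omega_j)$, the Dirichlet trace by the normal-component trace $\gamma^j_N : \mathbf{H}(\mathrm{div},\Omega_j) \to H^{-1/2}(\partial\Omega_j)$, and $H^1_{0,\Gamma}$ by $\mathbf{H}_{0,\Gamma}(\mathrm{div},\cdot)$.

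I expect the main obstacle to be the injectivity step, specifically the passage from ``all one-sided traces on every $S_j$ vanish'' to ``the glued field belongs to $H^1_{0,\Gamma}(\R^d)$''. On a single simple screen this is the standard statement that a function in $H^1$ of the complement whose two traces agree and vanish is in $H^1_{0}$; at a junction one has more than two pieces meeting, and one has to be careful that the partition $(\Omega_j)$ is fine enough (which is exactly what Assumption~\ref{ass:ms} buys, since the illustrated ``minimal'' partition has all domains mutually adjacent along $\Gamma$). A clean way to handle this is to note that $H^1_{0,\Gamma}(\R^d)$ is by definition a closed subspace of $H^1(\R^d)$, and $H^1(\R^d) \hookrightarrow H^1(\R^d\setminus\overline{\Gamma})$ is precisely the subspace of tuples whose one-sided traces are compatible across $\Gamma$; intersecting with the joint kernel of the $S_j$-traces and invoking \cite[Prop.~5.2]{ClH13} identifies this with $H^1_{0,\Gamma}(\R^d)$. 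Continuity of $\overline{\Theta}_D$ together with injectivity gives the claimed continuous injection $\mtsd \hookrightarrow \prod_j H^{1/2}(S_j)$, and symmetrically for $\mtsn$.
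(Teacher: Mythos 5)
Your proposal is correct and rests on the same decomposition the paper uses: restrict along the Lipschitz partition $(\Omega_j)_{j=0,\dots,m}$ of Assumption~\ref{ass:ms} and invoke per-subdomain trace theory, identifying $H^{1/2}(S_j)$ with $H^1(\Omega_j)/H^1_{0,\Gamma}(\Omega_j)$. The difference lies in how injectivity is obtained. The paper argues purely by quotient-space bookkeeping: it embeds $H^1(\R^d\setminus\overline{\Gamma})$ into $H^1(\R^d\setminus\overline{\cup_j\partial\Omega_j})$, passes to quotients by the common subspace $H^1_{0,\Gamma}(\R^d)$, and then identifies the larger quotient with $H^{1/2}(S_0)\times\dots\times H^{1/2}(S_m)$; no trace operator or kernel argument appears explicitly. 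You instead construct the map $\overline{\Theta}_D$ from componentwise Dirichlet traces, check that it annihilates $H^1_{0,\Gamma}(\R^d)$, and prove injectivity head-on by reducing it to the characterization of $H^1_{0,\Gamma}(\R^d)$ as the $H^1(\R^d)$-functions with vanishing trace on $\Gamma$ (your appeal to \cite{ClH13}; the exact numbering may differ, but that is indeed the fact needed, and it is where the Lipschitz-screen regularity of the $S_j$ enters). Your route is arguably the more careful one: the map from $H^1(\R^d\setminus\overline{\cup_j\partial\Omega_j})/H^1_{0,\Gamma}(\R^d)$ onto $\prod_j H^{1/2}(S_j)$ is not injective in general (its kernel contains classes of functions that jump across $\cup_j\partial\Omega_j\setminus\overline{\Gamma}$ while vanishing near $\Gamma$), so injectivity genuinely has to be verified on the image of $\mtsd$, i.e.\ for functions with no jump across the skeleton away from $\Gamma$ --- which is precisely your gluing argument. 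One slip to correct: the connected components of $\R^d\setminus\overline{\Gamma}$ are \emph{not} the $\Omega_j$ (for a screen with boundary the complement is typically connected), so a function in $H^1(\R^d\setminus\overline{\Gamma})$ is not ``nothing but'' a tuple in $\prod_j H^1(\Omega_j)$; this is harmless for your argument, since you only use the bounded restriction maps to the $\Omega_j$ (well defined because each $\Omega_j$ is disjoint from $\overline{\Gamma}$), but it should be stated as an embedding rather than an identification.
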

\begin{proof}
We recall that $\mtsd = H^1(\mathbb{R}^d\backslash \overline{\Gamma})/ H^1_{0,
\Gamma}(\mathbb{R}^d)$ and note that $H^1(\mathbb{R}^d\backslash 
\overline{\Gamma}) \subset H^1(\mathbb{R}^d\backslash \overline{\cup_{j=0}^m 
\partial \Omega_j})$. This induces the injection 
\begin{align*}
    \mtsd = H^1(\mathbb{R}^d\backslash \overline{\Gamma})/ H^1_{0,\Gamma}( 
    \mathbb{R}^d) \xhookrightarrow{} H^1(\mathbb{R}^d\backslash 
    \overline{\cup_{j=0}^m \partial \Omega_j})/H^1_{0,\Gamma}(\mathbb{R}^d).
\end{align*}
Additionally, we have the natural identification
\begin{align*}
    H^1(\mathbb{R}^d\backslash \overline{\cup_{j=0}^m \partial \Omega_j})
    \cong H^1(\Omega_0) \times \dots H^1(\Omega_m),
\end{align*}
that associates $u \in H^1(\mathbb{R}^d\backslash \overline{\cup_{j=0}^m 
\partial\Omega_j})$ with $(u_{|\Omega_0}, \dots, u_{|\Omega_m})$. From this 
natural identification, we get the isomorphism
\begin{align*}
H^1(\mathbb{R}^d\backslash 
    \overline{\cup_{j=0}^m \partial \Omega_j})/H^1_{0,\Gamma}(\mathbb{R}^d)
    &\cong [H^1(\Omega_0)/H^1_{0,\Gamma}(\Omega_0)]\times \dots \times 
    [H^1(\Omega_m)/H^1_{0,\Gamma}(\Omega_m)]\\
    &\cong H^{1/2}(S_0)\times \dots \times H^{1/2}(S_m).
\end{align*}
Therefore, we have the injection 
\begin{align*}
\mtsd \xhookrightarrow{} H^{1/2}(S_0)\times \dots \times H^{1/2}(S_m).
\end{align*}
$\mtsn \xhookrightarrow{} H^{-1/2}(S_0)\times \dots \times H^{-1/2}(S_m)$ 
follows analogously.
\end{proof}
%-----

%-----
\begin{lemma}
\label{lem:dualityDecoupling1}
Let $\Gamma$ be a multi-screen as in Assumption~\ref{ass:ms}.
Then, for $w \in \mtsd$ and $\varphi \in \mtsn$ such that $\varphi_{|S_j} \in 
\widetilde{H}^{-1/2}(S_j) \, \forall \, j=0,\dots,m$, we have that
\begin{align}
    \ll w , \varphi \gg_{\Gamma} = \sum_{l=0}^m \langle w_{|S_l}, \varphi_{|S_l}
    \rangle_{S_l}.
\end{align}
\end{lemma}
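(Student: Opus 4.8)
The plan is to start from the defining formula for the bilinear pairing,
\[
\ll w, \varphi \gg_\Gamma \;=\; \int_{\R^d\setminus\overline{\Gamma}} \mb{p}\cdot\nabla w + w\,\divg(\mb{p})\,d\bx,
\]
where $\mb{p}$ is any representative in $\mathbf{H}(\mathrm{div},\R^d\setminus\overline{\Gamma})$ of $\varphi$ and $w$ any representative in $H^1(\R^d\setminus\overline{\Gamma})$. Since $\Gamma \subset \cup_{j=0}^m \partial\Omega_j$, we have $H^1(\R^d\setminus\overline{\Gamma}) \subset H^1(\R^d\setminus\overline{\cup_j \partial\Omega_j})$ and similarly for $\mathbf{H}(\mathrm{div})$, so the volume integral can be split over the disjoint open sets $\Omega_0,\dots,\Omega_m$ whose union (with the common boundary) is all of $\R^d$:
\[
\ll w, \varphi \gg_\Gamma \;=\; \sum_{l=0}^m \int_{\Omega_l} \mb{p}_{|\Omega_l}\cdot\nabla w_{|\Omega_l} + w_{|\Omega_l}\,\mathrm{div}(\mb{p}_{|\Omega_l})\,d\bx.
\]
Here I use the natural identification $H^1(\R^d\setminus\overline{\cup_j\partial\Omega_j}) \cong \prod_l H^1(\Omega_l)$ and its $\mathbf{H}(\mathrm{div})$ analogue, exactly as in the proof of Lemma~\ref{lem:MTinjections}.

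Next I would recognize each summand as a localized instance of the same kind of volume pairing, but now on the \emph{Lipschitz domain} $\Omega_l$ with boundary $\partial\Omega_l$. Because $\overline{\Gamma}\cap\partial\Omega_l = \overline{\Gamma}_l = \overline{S}_l$ is a Lipschitz screen, and by hypothesis $\varphi_{|S_l}\in\widetilde{H}^{-1/2}(S_l)$, the restriction $\mb{p}_{|\Omega_l}$ actually has a well-defined normal trace in $H^{-1/2}(\partial\Omega_l)$ supported on $\overline{S}_l$. Then the classical integration-by-parts / Green formula on the Lipschitz domain $\Omega_l$ gives
\[
\int_{\Omega_l} \mb{p}_{|\Omega_l}\cdot\nabla w_{|\Omega_l} + w_{|\Omega_l}\,\mathrm{div}(\mb{p}_{|\Omega_l})\,d\bx \;=\; \langle \gamma_D(w_{|\Omega_l}), \gamma_N(\mb{p}_{|\Omega_l})\rangle_{\partial\Omega_l} \;=\; \langle w_{|S_l}, \varphi_{|S_l}\rangle_{S_l},
\]
where in the last step I use that the normal-trace functional is supported on $\overline{S}_l$, so the duality pairing on $\partial\Omega_l$ collapses to the duality pairing on $S_l$ between $H^{1/2}(S_l)$ and $\widetilde{H}^{-1/2}(S_l)$. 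Summing over $l$ yields the claim.

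\textbf{Main obstacle.} The delicate point is the passage to the boundary, i.e. justifying the localized Green identity and the claim that the normal trace of $\mb{p}_{|\Omega_l}$ lives in the dual of $H^{1/2}(S_l)$ rather than only in a larger space; this is where the assumption $\varphi_{|S_j}\in\widetilde{H}^{-1/2}(S_j)$ for all $j$ is essential and must be invoked carefully, since without it the term $\langle w_{|S_l},\varphi_{|S_l}\rangle$ need not even be well-defined (one only has $w_{|S_l}\in H^{1/2}(S_l)$, not $\widetilde{H}^{1/2}(S_l)$). I would handle this by a density argument: verify the identity first for smooth compactly supported representatives (where everything is classical and the integration by parts on each $\Omega_l$ is elementary), and then pass to the limit using continuity of both sides — the left side by definition of $\ll\cdot,\cdot\gg$, and the right side by the continuity of the $S_l$-duality pairing together with the injection $\mtsd\hookrightarrow\prod_l H^{1/2}(S_l)$ from Lemma~\ref{lem:MTinjections}. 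A minor additional check is that the value is independent of the choice of representatives $w$ and $\mb{p}$, which follows because changing $\mb{p}$ by an element of $\mathbf{H}_{0,\Gamma}(\mathrm{div},\R^d)$ changes neither side (its normal trace on each $S_l$ vanishes), and similarly for $w$.
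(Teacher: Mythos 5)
There is a genuine gap at the core of your argument: the claim that, because $\varphi_{|S_l}\in\widetilde{H}^{-1/2}(S_l)$, the normal trace of the representative $\mathbf{p}_{|\Omega_l}$ on $\partial\Omega_l$ is ``supported on $\overline{S}_l$'', so that Green's formula on $\Omega_l$ collapses \emph{term by term} to $\langle w_{|S_l},\varphi_{|S_l}\rangle_{S_l}$. This is false in general. The representative $\mathbf{p}\in\mathbf{H}(\mathrm{div},\R^d\setminus\overline{\Gamma})$ is arbitrary, and its normal trace on $\partial\Omega_l$ typically does not vanish on the artificial interface $\partial\Omega_l\setminus\overline{\Gamma}$; the hypothesis on $\varphi_{|S_j}$ only concerns the restriction of the trace to $S_j$, not its support in $\partial\Omega_j$. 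Concretely, take $U$ and $\mathbf{p}$ smooth and supported near an interior point of $\partial\Omega_l\setminus\overline{\Gamma}$: then $\int_{\Omega_l}\bigl(\mathbf{p}\cdot\nabla U+U\,\mathrm{div}\,\mathbf{p}\bigr)\,d\bx$ can be nonzero while $\langle w_{|S_l},\varphi_{|S_l}\rangle_{S_l}=0$, so the per-domain identity you assert fails (the total identity survives only because the neighbouring subdomain contributes the opposite amount). For the same reason the density argument you sketch cannot repair the step: for each fixed $l$ the statement itself is wrong even for smooth data, not merely hard to justify for rough data.

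What makes the lemma true --- and what the paper's proof does --- is a cancellation in the \emph{sum} over $l$. After Green's formula one keeps the full boundary terms $\int_{\partial\Omega_l}(U_l)_{|\partial\Omega_l}\,\mathbf{n}_l\cdot(\mathbf{p}_l)_{|\partial\Omega_l}\,d\sigma$ and uses that, across every interface $\partial\Omega_i\cap\partial\Omega_j\setminus\overline{\Gamma}$ (which is not part of the screen), the $H^1$ representative has matching Dirichlet traces and the $\mathbf{H}(\mathrm{div})$ representative has matching normal components, while the outward normals from the two sides are opposite; hence the contributions from $\partial\Omega_l\setminus\Gamma$ cancel pairwise and only $\int_\Gamma\sum_j v_j\mu_j\,d\sigma$ survives. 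Only at that final stage does the assumption $\varphi_{|S_j}\in\widetilde{H}^{-1/2}(S_j)$ enter, to split the remaining integral over $\Gamma$ into the separate $H^{1/2}(S_j)\times\widetilde{H}^{-1/2}(S_j)$ pairings --- you correctly identify why it is needed there, since $w_{|S_j}$ lies only in $H^{1/2}(S_j)$. Your opening steps (splitting the volume integral over the $\Omega_l$, independence of the representatives) are fine; the missing idea is this interface cancellation, which cannot be replaced by a support claim on the individual normal traces.
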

\begin{proof}
Let us consider $w \in \mtsd$ and $\varphi \in \mtsn$. By definition
\begin{gather*}
    \ll w , \varphi \gg = \int\nolimits_{[\Gamma]} w \varphi \ d\sigma 
  = \int_{\mathbb{R}^d \backslash \overline{\Gamma}} \mathbf{p} \cdot \nabla U 
  + U \mathrm{div}(\mathbf{p}) \ d\bx,
\end{gather*}
for $U\in H^1(\R^d\setminus\overline{\Gamma})$ and $\mathbf{p} \in \mathbf{H
  }(\mathrm{div}, \R^d\setminus\overline{\Gamma})$ such that $\pi_D(U)=w$ and 
  $\pi_N(\mathbf{p}) = \varphi$.
  
For $j=0,\dots,m$, we set $U_j = U_{|\Omega_j}$ and $\mathbf{p}_j = 
\mathbf{p}_{|\Omega_j}$, and let $\mathbf{n}_j$ denote the outwards unit normal 
vector to $\Omega_j$. Then, by linearity of the integrals and Green's formula, 
we get
\begin{align}
 \int_{\mathbb{R}^d \backslash \overline{\Gamma}} \mathbf{p} \cdot \nabla U 
  + U \mathrm{div}(\mathbf{p}) \ d\bx &= \sum_{j=0}^m \, \int_{\Omega_j} 
  \mathbf{p}_j \cdot \nabla U_j + U_j \mathrm{div}(\mathbf{p}_j) \ d\bx, 
  \nonumber\\
  &= \sum_{j=0}^m \, \int_{\partial \Omega_j} (U_{j})_{|\partial\Omega_j} 
  \mathbf{n}_j \cdot (\mathbf{p}_{j})_{|\partial\Omega_j} d\sigma. 
  \label{eq:dualityaux}
\end{align}

Now, let us point out that for any $j=0,\dots, m$ we know that functions in $H^1
(\R^d\setminus \overline{\Gamma})$ and $\mathbf{p} \in \mathbf{H}(\mathrm{div}, 
\R^d\setminus\overline{\Gamma})$ do not jump across $\partial \Omega_j \setminus
\Gamma$. This allow us to simplify \eqref{eq:dualityaux} further as
\begin{align}
 \sum_{j=0}^m \, \int_{\partial \Omega_j} (U_{j})_{|\partial\Omega_j} 
 \mathbf{n}_j\cdot (\mathbf{p}_{j})_{|\partial\Omega_j} d\sigma = 
 \int_{\Gamma} \sum_{j=0}^m v_j \mu_j d\sigma,
 \label{eq:dualityRed1}
\end{align}
where $v_j = (U_j)_{|\Gamma}$ and $\mu_j = \mathbf{n}_j \cdot
(\mathbf{p}_{j})_{|\Gamma}$.

In order to see this, let us illustrate it for the case shown in 
Fig.~\ref{fig:LPMinInflated}. There $H^1(\R^d\setminus \overline{\Gamma})$ 
implies
\begin{align}
U_2 = U_0 \: \text{ on } \partial \Omega_2\setminus\Gamma, \qquad \qquad
U_1 = U_0 \: \text{ on } \partial \Omega_1\setminus\Gamma.
\label{eq:contAcrossGamma1}
\end{align}
Similarly, since $\mathbf{p} \in \mathbf{H}(\mathrm{div},\R^d\setminus\overline{
\Gamma})$ and $\mathbf{n}_2=-\mathbf{n}_0$ on $\partial \Omega_2 
\setminus \Gamma$, and $\mathbf{n}_1=\mathbf{n}_0$ on $\partial \Omega_1 
\setminus \Gamma$, we have
\begin{align}
\mathbf{n}_2 \cdot \mathbf{p}_{2} = -\mathbf{n}_0 \cdot \mathbf{p}_{0} \: 
\text{ on } \partial \Omega_2\setminus\Gamma, \qquad \qquad
\mathbf{n}_1 \cdot \mathbf{p}_{1} = -\mathbf{n}_0 \cdot \mathbf{p}_{0}  \: 
\text{ on } \partial \Omega_1\setminus\Gamma.
    \label{eq:contAcrossGamma2}
\end{align}

\begin{figure}[h]
\vspace{-1cm}
\centering
 \includegraphics[width=0.55\textwidth]{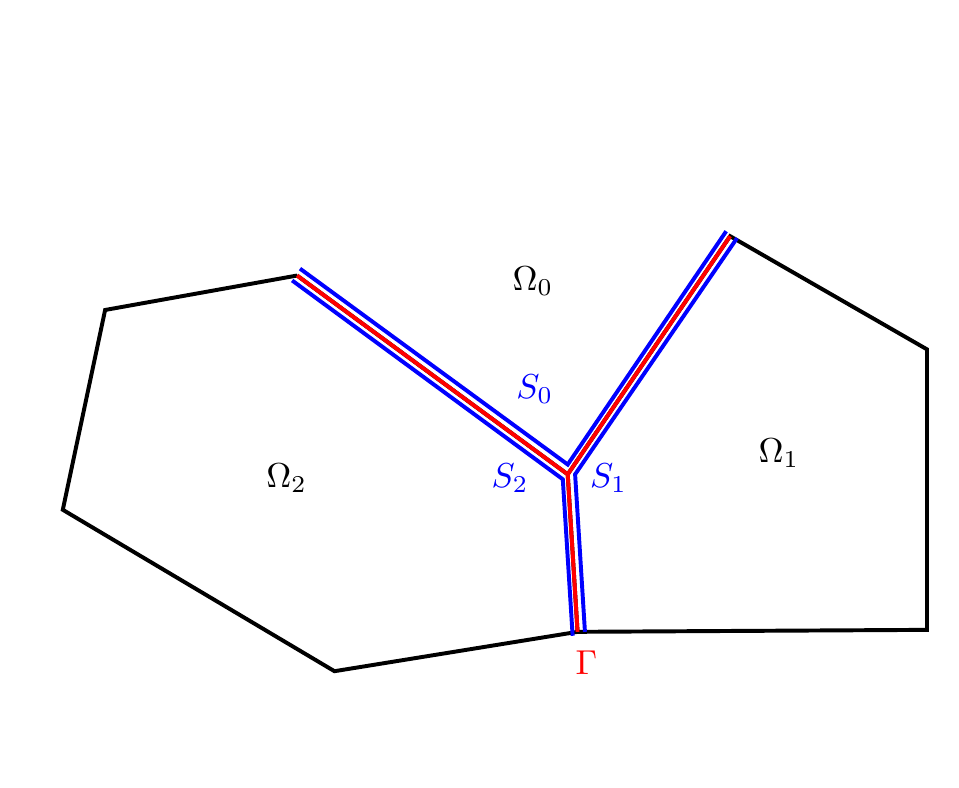}
 \caption{Example of a multi-screen $\Gamma$ with a triple junction. Here $\Gamma, S_1, S_2$ and $S_0$ overlap, so they have been drawn slightly shifted for the sake of visibility.}
 \label{fig:LPMinInflated}
\end{figure}

Then 
\begin{align}
\sum_{j=0}^2 \, \int_{\partial \Omega_j} (U_{j})_{|\partial\Omega_j}\mathbf{n 
}_j\cdot(\mathbf{p}_{j})_{|\partial\Omega_j} d\sigma = \sum_{j=0}^2 \int_{ 
\partial\Omega_j\setminus\Gamma} (U_{j})_{|\partial\Omega_j}\mathbf{n}_j \cdot 
(\mathbf{p}_{j})_{|\partial\Omega_j}d\sigma + \int_{\Gamma} \sum_{j=0}^2 v_j
\mu_j d\sigma.
\label{eq:dualityaux2}
\end{align}
Using \eqref{eq:contAcrossGamma1} and \eqref{eq:contAcrossGamma2} in 
\eqref{eq:dualityaux2}, the terms on $\partial \Omega_0 \setminus \Gamma$, 
$\partial \Omega_1 \setminus \Gamma$ and $\partial \Omega_2 \setminus \Gamma$ 
vanish, and we get
\begin{align*}
\sum_{j=0}^2 \, \int_{\partial \Omega_j} (U_{j})_{|\partial\Omega_j} \mathbf{n 
}_j \cdot (\mathbf{p}_{j})_{|\partial\Omega_j} d\sigma = \int_{\Gamma} 
\sum_{j=0}^2 v_j \mu_j d\sigma.
\end{align*}

Next, let us continue with our proof and return to \eqref{eq:dualityRed1}. We 
note that 
\begin{align*}
\mu_j = \mathbf{n}_j \cdot (\mathbf{p}_{j})_{|\partial\Omega_j} = \pi_N( 
\mathbf{p}_{|\Omega_j}) = (\pi_N( \mathbf{p}) )_{|S_j} = \varphi_{|S_j} \in 
\widetilde{H}^{-1/2}(S_j),\\
v_j = (U_{j})_{|\partial\Omega_j} = \pi_D( U_{|\Omega_j}) = (\pi_D(U))_{|S_j} = 
w_{|S_j} \in H^{1/2}(S_j).
\end{align*}
This implies that on the right hand side of \eqref{eq:dualityRed1}, we are 
allowed to split the integral over $\Gamma$ into the sum of the integrals over 
$S_j$ for $j=0,\dots,m$. Furthermore, we can write it in terms of the $H^{1/2} 
(S_j) \times\widetilde{H}^{-1/2}(S_j)$-duality pairings, i.e.
\begin{align*}
\int_{\Gamma} \sum_{j=0}^m v_j \mu_j d\sigma = \sum_{j=0}^m \int_{S_j} v_j 
\mu_j d\sigma = \sum_{j=0}^m \langle v_j, \mu_j \rangle_{S_j}.
\end{align*}
Finally, using again that $\mu_j = \varphi_{|S_j}$ and $v_j = w_{|S_j}$, we 
conclude that
\begin{equation*}
 \ll w , \varphi \gg_{\Gamma} = \sum_{j=0}^m \langle w_{|S_j}, \varphi_{|S_j} 
 \rangle_{S_j}.
\end{equation*}
\end{proof}
%-----

Analogously, one can prove:
\begin{lemma}
\label{lem:dualityDecoupling2}
Let $\Gamma$ be a multi-screen as in Assumption~\ref{ass:ms}.
Then, for $\varphi \in \mtsn$ and $w \in \mtsd$ such that $w_{|S_j} \in 
\widetilde{H}^{1/2}(S_j) \, \forall \, j=0,\dots,m$, we have that
\begin{align}
    \ll w , \varphi \gg_{\Gamma} = \sum_{l=0}^m \langle w_{|S_l}, \varphi_{|S_l}
    \rangle_{S_l}.
\end{align}
\end{lemma}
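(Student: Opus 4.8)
The plan is to mirror the proof of Lemma~\ref{lem:dualityDecoupling1} almost verbatim, simply interchanging the roles played by the regularity assumption: there the Neumann datum $\varphi$ was assumed to restrict to $\widetilde{H}^{-1/2}(S_j)$ on each piece, whereas here it is the Dirichlet datum $w$ that is assumed to restrict to $\widetilde{H}^{1/2}(S_j)$. Concretely, I would start from the definition of the pairing, pick representatives $U\in H^1(\R^d\setminus\overline{\Gamma})$ and $\mathbf p\in\mathbf{H}(\mathrm{div},\R^d\setminus\overline{\Gamma})$ with $\pi_D(U)=w$, $\pi_N(\mathbf p)=\varphi$, decompose the volume integral over the Lipschitz partition $(\Omega_j)_{j=0\ldots m}$ from Assumption~\ref{ass:ms}, and apply Green's formula on each $\Omega_j$ to obtain $\ll w,\varphi\gg=\sum_{j=0}^m\int_{\partial\Omega_j}(U_j)_{|\partial\Omega_j}\,\mathbf n_j\cdot(\mathbf p_j)_{|\partial\Omega_j}\,d\sigma$, exactly as in \eqref{eq:dualityaux}.

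The next step is to observe that on the faces $\partial\Omega_j\setminus\Gamma$ the traces of $U$ and $\mathbf n\cdot\mathbf p$ coming from adjacent subdomains cancel in pairs, since $U\in H^1$ and $\mathbf p\in\mathbf H(\mathrm{div})$ do not jump across $\partial\Omega_j\setminus\Gamma$; this is the same cancellation argument illustrated for the triple junction in \eqref{eq:contAcrossGamma1}--\eqref{eq:dualityaux2}, and it reduces the sum to $\int_\Gamma\sum_{j=0}^m v_j\mu_j\,d\sigma$ with $v_j=(U_j)_{|\Gamma}=w_{|S_j}$ and $\mu_j=\mathbf n_j\cdot(\mathbf p_j)_{|\Gamma}=\varphi_{|S_j}$. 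The only place where the proof genuinely diverges from Lemma~\ref{lem:dualityDecoupling1} is in how one justifies splitting $\int_\Gamma\sum_j v_j\mu_j\,d\sigma=\sum_j\langle v_j,\mu_j\rangle_{S_j}$: here we have $v_j=w_{|S_j}\in\widetilde{H}^{1/2}(S_j)$ by hypothesis and $\mu_j=\varphi_{|S_j}\in H^{-1/2}(S_j)$ by Lemma~\ref{lem:MTinjections}, so each term is the well-defined $\widetilde{H}^{1/2}(S_j)\times H^{-1/2}(S_j)$-duality pairing rather than the $H^{1/2}\times\widetilde{H}^{-1/2}$ pairing used before.

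I expect the main (and really the only) subtlety to be bookkeeping: making sure that the integration-by-parts on each $\Omega_j$ is legitimate for the chosen representatives — Green's formula $\int_{\Omega_j}\mathbf p_j\cdot\nabla U_j+U_j\,\mathrm{div}(\mathbf p_j)=\int_{\partial\Omega_j}(U_j)_{|\partial\Omega_j}\,\mathbf n_j\cdot(\mathbf p_j)_{|\partial\Omega_j}$ holds for $U_j\in H^1(\Omega_j)$, $\mathbf p_j\in\mathbf H(\mathrm{div},\Omega_j)$ on the Lipschitz domain $\Omega_j$ — and that the boundary term, which a priori is only a $H^{1/2}(\partial\Omega_j)\times H^{-1/2}(\partial\Omega_j)$ pairing, can be localized to $S_j$ precisely because $w_{|S_j}\in\widetilde{H}^{1/2}(S_j)$ means the $S_j$-component of the trace extends by zero. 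Since Lemma~\ref{lem:dualityDecoupling1} already carries out this argument in the dual setting and the statement is explicitly flagged as provable ``analogously,'' the proof is short:

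\begin{proof}
The argument is identical to that of Lemma~\ref{lem:dualityDecoupling1}, interchanging the roles of $w$ and $\varphi$. Pick representatives $U\in H^1(\R^d\setminus\overline{\Gamma})$ and $\mathbf p\in\mathbf H(\mathrm{div},\R^d\setminus\overline{\Gamma})$ with $\pi_D(U)=w$ and $\pi_N(\mathbf p)=\varphi$. Decomposing over the Lipschitz partition $(\Omega_j)_{j=0\ldots m}$ of Assumption~\ref{ass:ms} and applying Green's formula on each $\Omega_j$ yields, as in \eqref{eq:dualityaux},
\begin{align*}
 \ll w,\varphi\gg = \sum_{j=0}^m \int_{\partial\Omega_j}(U_j)_{|\partial\Omega_j}\,\mathbf n_j\cdot(\mathbf p_j)_{|\partial\Omega_j}\,d\sigma.
\end{align*}
Since $U$ and $\mathbf p$ do not jump across $\partial\Omega_j\setminus\Gamma$, the contributions on these faces cancel in pairs exactly as in \eqref{eq:contAcrossGamma1}--\eqref{eq:dualityaux2}, leaving
\begin{align*}
 \ll w,\varphi\gg = \int_\Gamma \sum_{j=0}^m v_j\,\mu_j\,d\sigma,
\end{align*}
with $v_j=(U_j)_{|\Gamma}=w_{|S_j}$ and $\mu_j=\mathbf n_j\cdot(\mathbf p_j)_{|\Gamma}=\varphi_{|S_j}$. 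By hypothesis $v_j=w_{|S_j}\in\widetilde{H}^{1/2}(S_j)$, and by Lemma~\ref{lem:MTinjections} $\mu_j=\varphi_{|S_j}\in H^{-1/2}(S_j)$; hence the integral over $\Gamma$ splits into the sum over the $S_j$ of the $\widetilde{H}^{1/2}(S_j)\times H^{-1/2}(S_j)$-duality pairings,
\begin{align*}
 \int_\Gamma\sum_{j=0}^m v_j\,\mu_j\,d\sigma = \sum_{j=0}^m \langle v_j,\mu_j\rangle_{S_j} = \sum_{l=0}^m \langle w_{|S_l},\varphi_{|S_l}\rangle_{S_l},
\end{align*}
which is the claimed identity.
\end{proof}
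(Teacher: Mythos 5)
Your proposal is correct and follows exactly the route the paper intends: the paper gives no separate proof for this lemma, stating only that it follows ``analogously'' to Lemma~\ref{lem:dualityDecoupling1}, and your argument is precisely that analogy, with the single genuine change correctly identified — the hypothesis $w_{|S_j}\in\widetilde{H}^{1/2}(S_j)$ (together with $\varphi_{|S_j}\in H^{-1/2}(S_j)$ from Lemma~\ref{lem:MTinjections}) now justifies splitting the integral over $\Gamma$ via the $\widetilde{H}^{1/2}(S_j)\times H^{-1/2}(S_j)$ duality instead of the $H^{1/2}(S_j)\times\widetilde{H}^{-1/2}(S_j)$ one.
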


%%%%%%%%%%%%%%%%%%%%%%%%%%%%%%%%%%%%%%%%%%%%%%%%%%%%%%%%%%%
\subsection{Inverse inequalities}
In this section we will use a slightly different notation for our discrete 
multi-trace subspaces, just to allow them to be either in the primal or on the 
dual (virtual) mesh as introduced in Section~\ref{sec:CaldPreFull}. We consider
\begin{align*}
\mathbb{Z}_h^{1/2}(\Gamma) \subset \mtsd, \qquad \qquad 
\mathbb{Z}_h^{-1/2}(\Gamma) \subset \mtsn,
\end{align*}
with $\dim(\mathbb{Z}_h^{1/2}(\Gamma))<\infty$ and  
$\dim(\mathbb{Z}_h^{-1/2}(\Gamma))<\infty$.

In order to present the main results of this Section, we need to introduce some 
existing results. Let us consider the simple screens $S_j$ for $j=0,\dots,m$ and 
the following finite dimensional spaces of piecewise polynomials:
\begin{align*}
    \mathcal{S}^{0,-1}(S_{jh})\,: \, \text{ piecewise constants}, \qquad
    \mathcal{S}^{1,0}(S_{jh})\, : \, \text{ piecewise linears}.\\
\end{align*}

\begin{lemma}[{\cite[Lemma~2.8]{Heu98}}] For $j=0,\dots,m$, the following 
inverse inequalities hold:
\begin{align}
\Vert u_h\Vert_{\widetilde{H}^{1/2}(S_j)} &\leq c_1(1+\vert\log h\vert)\Vert 
u_h\Vert_{H^{1/2}(S_j)}, \qquad \forall u_h\in \mathcal{S}^{1,0}(S_{jh})\subset 
\widetilde{H}^{1/2}(S_j), \label{eq:iiSjd}\\
\Vert \varphi_h\Vert_{\widetilde{H}^{-1/2}(S_j)} &\leq c_2(1+\vert\log 
h\vert)\Vert \varphi_h\Vert_{H^{-1/2}(S_j)}, \qquad \forall \varphi_h\in 
\mathcal{S}^{0,-1}(S_{jh})\subset \widetilde{H}^{-1/2}(S_j), \label{eq:iiSjn}
\end{align}
for mesh size $h\leq1$ and with constants $c_1, c_2>0$ independent of $h$.
\end{lemma}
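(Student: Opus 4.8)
The plan is to reduce both inequalities to a single weighted $L^2$-estimate in a collar of $\partial S_j$ and then to a one-dimensional model problem. Only the stated directions are at issue, since $\widetilde{H}^{1/2}(S_j)\hookrightarrow H^{1/2}(S_j)$ and $H^{-1/2}(S_j)\hookrightarrow\widetilde{H}^{-1/2}(S_j)$ hold with $h$-independent, geometry-dependent constants; I would prove \eqref{eq:iiSjd} first and deduce \eqref{eq:iiSjn} from it by $L^2$-duality. Writing $\rho(x):=\operatorname{dist}(x,\partial S_j)$ and using the Lions--Magenes characterisation of $\widetilde{H}^{1/2}$ on a Lipschitz domain (see \cite[Ch.~3]{MCL00}),
\begin{equation*}
\Vert v\Vert_{\widetilde{H}^{1/2}(S_j)}^2 \ \simeq\ \Vert v\Vert_{H^{1/2}(S_j)}^2 \ +\ \int_{S_j}\frac{\vert v(x)\vert^2}{\rho(x)}\,dx ,
\end{equation*}
the $H^{1/2}$-seminorm being common to both spaces, \eqref{eq:iiSjd} becomes equivalent to the bound $\int_{S_j}\vert u_h\vert^2\rho^{-1}\,dx\lesssim(1+\vert\log h\vert)^2\,\Vert u_h\Vert_{H^{1/2}(S_j)}^2$ for $u_h\in\mathcal{S}^{1,0}(S_{jh})$. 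Since on any fixed interior region $\{\rho>\epsilon_0\}$ the two norms are $h$-uniformly equivalent, only the contribution of a fixed boundary collar remains, and there I would localise by a partition of unity subordinate to the smooth faces of $\partial S_j$ and to neighbourhoods of its corners, flattening each face by a bi-Lipschitz coordinate change.

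On a flattened face, with $s$ the tangential variable and $t=\rho$ the transverse one, the weighted integral is (up to a two-sided bounded Jacobian) $\int\int\vert u_h(s,t)\vert^2\,t^{-1}\,dt\,ds$. For fixed $s$ the slice $u_h(s,\cdot)$ is, up to comparison with a one-dimensional nodal interpolant, a continuous piecewise linear function on a mesh of width $\simeq h$ that vanishes at $t=0$; splitting the $t$-integral at $t=h$ and using linearity on $(0,h)$ gives $\int\vert u_h(s,t)\vert^2\,t^{-1}\,dt\lesssim(1+\vert\log h\vert)\,\Vert u_h(s,\cdot)\Vert_{L^\infty_t}^2$. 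The decisive ingredient is then the one-dimensional discrete Sobolev inequality at the critical exponent, $\Vert v_h\Vert_{L^\infty(I)}^2\lesssim(1+\vert\log h\vert)\,\Vert v_h\Vert_{H^{1/2}(I)}^2$ for piecewise linears on a width-$h$ mesh, which upgrades the bound to $(1+\vert\log h\vert)^2\,\Vert u_h(s,\cdot)\Vert_{H^{1/2}_t}^2$. Integrating over $s$ and using the anisotropic embedding $H^{1/2}\hookrightarrow L^2_s H^{1/2}_t$ on the model (immediate from the Fourier multiplier inequality, then transplanted through the coordinate map) establishes the weighted bound and hence \eqref{eq:iiSjd}.

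For \eqref{eq:iiSjn} I would argue by $L^2$-duality, using $\widetilde{H}^{-1/2}(S_j)=\big(H^{1/2}(S_j)\big)'$ and $H^{-1/2}(S_j)=\big(\widetilde{H}^{1/2}(S_j)\big)'$ with $L^2(S_j)$ as pivot. Given $\varphi_h\in\mathcal{S}^{0,-1}(S_{jh})$ and $v\in H^{1/2}(S_j)$, I would choose a Fortin-type quasi-interpolant $w_h\in\mathcal{S}^{1,0}(S_{jh})$ with $\langle\psi_h,v-w_h\rangle_{S_j}=0$ for all $\psi_h\in\mathcal{S}^{0,-1}(S_{jh})$ and $\Vert w_h\Vert_{\widetilde{H}^{1/2}(S_j)}\lesssim(1+\vert\log h\vert)\,\Vert v\Vert_{H^{1/2}(S_j)}$; such an operator exists because $\dim\mathcal{S}^{1,0}(S_{jh})=\dim\mathcal{S}^{0,-1}(S_{jh})$ and the $L^2$-pairing of these two spaces is stable (see \cite{steinBook2}; cf.\ Remark~\ref{rem:stability}), and the stated stability bound is \eqref{eq:iiSjd} applied to $w_h$ together with an $H^1$-stable interpolation estimate. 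Then $\vert\langle\varphi_h,v\rangle_{S_j}\vert=\vert\langle\varphi_h,w_h\rangle_{S_j}\vert\le\Vert\varphi_h\Vert_{H^{-1/2}(S_j)}\Vert w_h\Vert_{\widetilde{H}^{1/2}(S_j)}$, and taking the supremum over $v\neq0$ yields \eqref{eq:iiSjn}.

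I expect the hard part to be bookkeeping the logarithms rather than any single inequality: $H^{1/2}$ is exactly the borderline regularity both for the trace onto the codimension-one set $\partial S_j$ and for $L^\infty$ in the transverse variable, so every step leaks a logarithm and one must verify that these accumulate to at most $(1+\vert\log h\vert)^2$ under the integral. The two places that genuinely require care are (i) the flattening near the corners of $\partial S_j$, where the boundary is only Lipschitz and the transverse coordinate degenerates --- but the corner neighbourhoods have measure $O(h^2)$ and $u_h$ vanishes on $\partial S_j$, so their contribution is handled by an elementary local estimate --- and (ii) confirming that the quasi-interpolation operator used in the dual argument is $\widetilde{H}^{1/2}$-stable with only a single logarithmic loss.
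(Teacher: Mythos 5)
This statement is not proved in the paper at all: it is quoted verbatim from the literature (\cite[Lemma~2.8]{Heu98}), where the standard argument is of interpolation type -- one combines the estimate $\Vert v\Vert_{\widetilde{H}^{\pm s}(S_j)}\lesssim (1/2-s)^{-1}\Vert v\Vert_{H^{\pm s}(S_j)}$, valid for \emph{all} $v$ when $0\le s<1/2$, with an inverse inequality in the $\widetilde{H}^{\pm s}$-scale for the discrete spaces and the choice $1/2-s\simeq 1/\vert\log h\vert$; this treats \eqref{eq:iiSjd} and \eqref{eq:iiSjn} symmetrically and avoids any slicing. Your weighted-norm/slicing strategy is in the spirit of the classical ``face lemmas'' of domain decomposition and could in principle work for \eqref{eq:iiSjd}, but as written it has a genuine gap at its central step: after the bi-Lipschitz flattening the transverse slices $t\mapsto u_h(s,t)$ are no longer piecewise linear, and even without flattening the 1D mesh induced on a slice of an unstructured triangulation is not quasi-uniform (breakpoints can be arbitrarily close together), so the one-dimensional discrete Sobolev inequality with constant $(1+\vert\log h\vert)^{1/2}$ does not apply directly; the constant in that inequality depends on the local mesh scale at the maximum, and your parenthetical ``comparison with a one-dimensional nodal interpolant'' does not repair this, since the sup-norm of the slice is not controlled by that of an interpolant on a uniform width-$h$ grid (a narrow spike between grid nodes is invisible to the interpolant). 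One must instead exploit the two-dimensional element structure (e.g.\ gradient bounds $\vert\nabla u_h\vert\lesssim h^{-1}\Vert u_h\Vert_{L^\infty}$ on boundary-adjacent elements, or a Whitney-type decomposition of the collar), and none of this is supplied.

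The duality argument for \eqref{eq:iiSjn} fails more concretely. For the bound $\vert\langle\varphi_h,w_h\rangle\vert\le\Vert\varphi_h\Vert_{H^{-1/2}(S_j)}\Vert w_h\Vert_{\widetilde{H}^{1/2}(S_j)}$ you need $w_h\in\widetilde{H}^{1/2}(S_j)$, i.e.\ $w_h$ must vanish on $\partial S_j$, so the admissible target space is $\mathcal{S}^{1,0}_0(S_{jh})$; and in any case the premise that $\dim\mathcal{S}^{1,0}(S_{jh})=\dim\mathcal{S}^{0,-1}(S_{jh})$ is false here: both spaces live on the \emph{same} triangulation of a two-dimensional screen, where the number of triangles is roughly twice the number of vertices. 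The stable pairing invoked in Remark~\ref{rem:stability} is between piecewise linears on the primal mesh and piecewise constants on the \emph{dual barycentric} mesh, which is a different (dimension-matched) pair, so it does not yield the Fortin-type operator you postulate, and the claimed bound $\Vert w_h\Vert_{\widetilde{H}^{1/2}}\lesssim(1+\vert\log h\vert)\Vert v\Vert_{H^{1/2}}$ is left unsupported as well. A correct test-function modification (e.g.\ $w=\chi_h v$ with a cut-off vanishing in an $h$-strip around $\partial S_j$, controlling $\langle\varphi_h,(1-\chi_h)v\rangle$ by local inverse estimates and using the logarithmic cut-off bound $\Vert\chi_h v\Vert_{\widetilde{H}^{1/2}}\lesssim(1+\vert\log h\vert)\Vert v\Vert_{H^{1/2}}$) can rescue the idea, but that is a different argument from the one you give.
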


Next, we define two projection operators that will play a key role in their 
proofs. 
Let $\mathcal{S}^{1,0}(\check{S}_{jh})$ be the space of piecewise 
linears on the dual barycentric mesh of $S_j$, as defined in \cite{BuC07}.
We introduce the generalized $L^2$-projection $\tilde{Q}_h^j : 
L^2(S_j) \to \mathcal{S}^{1,0}(\check{S}_{jh})$ as 
\begin{align}
\langle \tilde{Q}_h^j u, \phi_{h}\rangle_{S_j} = \langle u, \phi_{h} 
\rangle_{S_j}, \qquad \forall \phi_{h} \in \mathcal{S}^{0,-1}({S}_{jh}).
\label{eq:Qjh}
\end{align}

then, following \cite[Theorems~2.1 and 2.2]{steinBook2}, one can show
\begin{lemma}[{\cite[Theorem~4.3]{HJU13}, Case A}]
Let the family of meshes $\{S_{jh}\}_{h\in \mathcal{H}} , h > 0$ of a simple screen
$S_j$ be uniformly shape-regular and locally quasi-uniform. 
Then, under certain (mild) local mesh conditions \cite[Assumption~2.1]{steinBook2}, we have that
\begin{align}
\Vert\tilde{Q}_h^j u\Vert_{H^{1/2}(S_j)} \leq c_{Qj} \Vert u \Vert_{ 
H^{1/2}(S_j)}, \qquad \forall u \in H^{1/2}(S_j), \label{eq:contQjh}
\end{align}

and that the following inf-sup condition holds 
\begin{align} \label{eq:inf-sup}
\underset{v_h \in\mathcal{S}^{1,0}(\check{S}_{jh})}{\sup} \frac{|\langle\varphi_h, v_h\rangle|}{\Vert v_h \Vert_{H^{1/2}(S_j)}} \geq \frac{1}{c_1} \Vert\varphi_h\Vert_{\widetilde{H}^{-1/2}(S_j)}\:, \qquad \forall \: \varphi_h \: \in \: \mathcal{S}^{0,-1}(S_{jh}),
\end{align}
with constants $c_1>0$ independent of $h$.
\end{lemma}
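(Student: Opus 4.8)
The plan is to establish the $H^{1/2}(S_j)$-stability estimate \eqref{eq:contQjh} first and then to read off the inf-sup condition \eqref{eq:inf-sup} from it by a short duality argument, following the blueprint of \cite[Theorems~2.1 and 2.2]{steinBook2}.

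For \eqref{eq:contQjh} I would proceed in three steps. First, the mild local mesh conditions \cite[Assumption~2.1]{steinBook2} guarantee that the (non-symmetric) $L^2(S_j)$-pairing between $\mathcal{S}^{1,0}(\check{S}_{jh})$ and $\mathcal{S}^{0,-1}(S_{jh})$ satisfies a discrete inf-sup condition with an $h$-uniform constant; since the two spaces have equal dimension by construction of the dual space, this already makes $\tilde{Q}_h^j$ well defined, a projection onto $\mathcal{S}^{1,0}(\check{S}_{jh})$, and $L^2$-bounded: $\Vert \tilde{Q}_h^j u\Vert_{L^2(S_j)}\le c\,\Vert u\Vert_{L^2(S_j)}$. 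Second, I would upgrade this to $H^1(S_j)$-stability: taking a Scott--Zhang-type quasi-interpolant $\Pi_h$ onto $\mathcal{S}^{1,0}(\check{S}_{jh})$ (available because the barycentric dual mesh inherits the shape-regularity of $S_{jh}$), which is $H^1$-stable and first-order $L^2$-accurate, and using the projection property $\tilde{Q}_h^j\Pi_h=\Pi_h$ together with the inverse inequality on $\mathcal{S}^{1,0}(\check{S}_{jh})$, one gets
\[
 \Vert \tilde{Q}_h^j u-u\Vert_{L^2(S_j)}\le c\,h\,\Vert u\Vert_{H^1(S_j)},\qquad \Vert \tilde{Q}_h^j u\Vert_{H^1(S_j)}\le c\,\Vert u\Vert_{H^1(S_j)}\quad\forall u\in H^1(S_j).
\]
Third, since $S_j$ is a Lipschitz screen, $[L^2(S_j),H^1(S_j)]_{1/2}=H^{1/2}(S_j)$, so interpolating the $L^2$- and $H^1$-bounds yields \eqref{eq:contQjh}.

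The inf-sup condition \eqref{eq:inf-sup} then follows directly from \eqref{eq:contQjh} and the defining relation \eqref{eq:Qjh} of $\tilde{Q}_h^j$, using the duality $\widetilde{H}^{-1/2}(S_j)\cong (H^{1/2}(S_j))'$. Fix $\varphi_h\in\mathcal{S}^{0,-1}(S_{jh})$ and an arbitrary $w\in H^{1/2}(S_j)$, and set $v_h:=\tilde{Q}_h^j w\in\mathcal{S}^{1,0}(\check{S}_{jh})$. Since $\varphi_h\in\mathcal{S}^{0,-1}(S_{jh})$, the defining relation \eqref{eq:Qjh} gives $\langle \varphi_h,v_h\rangle_{S_j}=\langle \varphi_h,w\rangle_{S_j}$, while \eqref{eq:contQjh} gives $\Vert v_h\Vert_{H^{1/2}(S_j)}\le c_{Qj}\Vert w\Vert_{H^{1/2}(S_j)}$. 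Hence the supremum on the left of \eqref{eq:inf-sup} is at least $c_{Qj}^{-1}\,|\langle \varphi_h,w\rangle_{S_j}|/\Vert w\Vert_{H^{1/2}(S_j)}$, and taking the supremum over $w\in H^{1/2}(S_j)$ together with the dual characterisation of the $\widetilde{H}^{-1/2}(S_j)$-norm yields \eqref{eq:inf-sup} with $c_1=c_{Qj}$.

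The crux of the argument is the $L^2$- and $H^1$-stability of $\tilde{Q}_h^j$ in the first two steps: because the primal and dual spaces are only ``almost'' biorthogonal (true biorthogonality holding only when the Buffa--Christiansen construction is respected locally), the uniform invertibility of the coupling mass matrix is not automatic, and securing it is exactly the role of the local mesh conditions \cite[Assumption~2.1]{steinBook2}. Everything after that --- the interpolation to $H^{1/2}$ and the deduction of the inf-sup --- is routine.
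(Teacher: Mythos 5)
The paper itself gives no proof of this lemma: it is imported verbatim from \cite[Theorem~4.3]{HJU13} and \cite[Theorems~2.1 and 2.2]{steinBook2}, and your argument reconstructs exactly the route those sources take --- $L^2$- and $H^1$-stability of the generalized projection $\tilde{Q}_h^j$, interpolation to $H^{1/2}(S_j)$, and then \eqref{eq:inf-sup} via the duality argument with the test function $v_h=\tilde{Q}_h^j w$ and the defining relation \eqref{eq:Qjh}, so your proposal is correct and essentially the paper's (i.e.\ the cited) approach. The one imprecision is in your $H^1$-stability step: on meshes that are only \emph{locally} quasi-uniform the global inverse inequality you invoke is unavailable, and the estimate has to be carried out elementwise in mesh-size--weighted norms; this localized stability (not merely the invertibility of the primal--dual coupling matrix) is precisely what \cite[Assumption~2.1]{steinBook2} is there to secure.
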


\begin{remark}
 The local mesh conditions \cite[Assumption~2.1]{steinBook2} are considered mild because they are fulfilled by a broad set of meshes used in applications, including geometrically graded meshes, algebraically 2-graded meshes and families of meshes generated by adaptive red-green algorithms \cite{GSU21}. 
\end{remark}

Let us also introduce the projection operator $\Pi_h: \mtsd \to \mathbb{Z}_h^{
-1/2}(\Gamma)\subset\mtsn$ 
\begin{align}
    \ll \Pi_h u, w_h \gg = (u, w_h)_{\mtsd}.
    \label{eq:Pih}
\end{align}

From \cite[Thm.~2.1 and 2.2]{steinBook2}, we know that under certain (mild) local mesh conditions \cite[Assumption~2.1]{steinBook2} we satisfy the discrete inf-sup condition of the duality pairing of multi-trace spaces. Moreover, these mesh conditions also guarantee the continuity of $\Pi_h$,
i.e.
\begin{align}
\Vert \Pi_h u \Vert_{\mtsn} \leq c_{\pi} \Vert  u \Vert_{\mtsd}, \qquad 
\forall u \in \mtsd. \label{eq:contPih}
\end{align}

Therefore, in order to use the continuity of both projection operators, 
we will need to satisfy the following mesh assumption:
\begin{assumption}
\label{asm:MeshAssumption}

Let $\Gamma$ be a multi-screen as in Assumption~\ref{ass:ms}.

We assume that for each $j=0,..,m$, the family of meshes $\{S_{jh}\}_{h\in \mathcal{H}} , h > 0$ 
of $S_j$ agree at the junction(s), are uniformly shape-regular, locally quasi-uniform and satisfy
the (mild) local mesh conditions from \cite[Assumption~2.1]{steinBook2}.
\end{assumption}

%-----
\begin{lemma}[Inverse inequality in $\mtsn$]
\label{lem:invIneqNeu}
Let $\Gamma$ be a multi-screen as in Assumption~\ref{ass:ms} and
$\mathbb{Z}_h^{1/2}(\Gamma)$ such that for all $v_h \in 
\mathbb{Z}_h^{1/2}(\Gamma)$ it holds that $(v_h)_{|S_j} \in \widetilde{H}^{1/2}
(S_j)$. Then, we have that for all $\varphi_h \in \mathbb{Z}_h^{-1/2}(\Gamma)$ 
\begin{equation}
\Vert \varphi_h \Vert_{\mtsn} \leq \widetilde{C}_N (1 + \vert \log h \vert ) 
\left( \sum_{j=0}^m \Vert \varphi_h \Vert_{H^{-1/2}(S_j)}^2 \right)^{1/2}, 
\label{eq:invIneqNeu}
\end{equation}
with mesh size $h\leq1$, and $\widetilde{C}_N$>0 independent of $h$.
\end{lemma}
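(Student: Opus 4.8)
The plan is to reduce the $\mathbb{H}^{-\half}(\Gamma)$-norm of a discrete function $\varphi_h$ to a sum of local $\widetilde{H}^{-1/2}(S_j)$-norms on the simple screens, then apply the screen-wise inverse inequality \eqref{eq:iiSjn} to each term. The bridge between the global quotient-space norm and the local screen norms is provided by Lemma~\ref{lem:dualityDecoupling2}: since every $v_h\in\mathbb{Z}_h^{1/2}(\Gamma)$ satisfies $(v_h)_{|S_j}\in\widetilde{H}^{1/2}(S_j)$, the duality pairing splits as $\ll v_h,\varphi_h\gg_\Gamma=\sum_{j=0}^m\langle (v_h)_{|S_j},\varphi_{h|S_j}\rangle_{S_j}$. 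This is precisely the hypothesis that has been built into the statement, so the decoupling is available for free.

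First I would use the characterization of the $\mathbb{H}^{-\half}(\Gamma)$-norm by duality, $\Vert\varphi_h\Vert_{\mtsn}=\sup_{w\in\mtsd\setminus\{0\}}\frac{|\ll w,\varphi_h\gg|}{\Vert w\Vert_{\mtsd}}$, combined with the projection $\Pi_h$ (or rather a primal-side projector onto $\mathbb{Z}_h^{1/2}(\Gamma)$) so that it suffices to test against discrete $v_h\in\mathbb{Z}_h^{1/2}(\Gamma)$, at the price of the $h$-uniform continuity constant from \eqref{eq:contQjh}/\eqref{eq:contPih} guaranteed by Assumption~\ref{asm:MeshAssumption}. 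Then Lemma~\ref{lem:dualityDecoupling2} turns the tested pairing into $\sum_j\langle (v_h)_{|S_j},\varphi_{h|S_j}\rangle_{S_j}$, which by Cauchy–Schwarz over the index $j$ and the duality definition of each local $\widetilde{H}^{-1/2}(S_j)$-norm is bounded by $\big(\sum_j\Vert v_h\Vert_{H^{1/2}(S_j)}^2\big)^{1/2}\big(\sum_j\Vert\varphi_h\Vert_{\widetilde{H}^{-1/2}(S_j)}^2\big)^{1/2}$. Next I would apply \eqref{eq:iiSjn} to each term of the second factor, producing the factor $(1+|\log h|)$ and replacing $\widetilde{H}^{-1/2}(S_j)$ by $H^{-1/2}(S_j)$. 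Finally I would control $\big(\sum_j\Vert v_h\Vert_{H^{1/2}(S_j)}^2\big)^{1/2}$ by $\Vert w\Vert_{\mtsd}$ using the injection from Lemma~\ref{lem:MTinjections} (applied to the preimage $w$ of $v_h$, or directly to $v_h$ after bounding $\Vert v_h\Vert_{\mtsd}\lesssim\Vert w\Vert_{\mtsd}$ via the projector), which cancels the denominator in the supremum and yields \eqref{eq:invIneqNeu}.

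**Main obstacle.** The delicate point is the direction of the argument: the inverse inequality \eqref{eq:iiSjn} only helps me if $\varphi_h$ is being tested by discrete functions, yet the quotient norm $\Vert\varphi_h\Vert_{\mtsn}$ is a supremum over \emph{all} of $\mtsd$. Passing from the continuous supremum to a discrete one requires a stable, $h$-uniformly bounded primal-side projection compatible with the screen-wise product structure — this is exactly the role of $\tilde Q_h^j$ / $\Pi_h$ and the reason Assumption~\ref{asm:MeshAssumption} is invoked. I expect the bookkeeping of which norm ($\widetilde H^{1/2}$ versus $H^{1/2}$, global $\mtsd$ versus local $S_j$) lives on which side of the pairing, and making sure the hypothesis $(v_h)_{|S_j}\in\widetilde H^{1/2}(S_j)$ is genuinely used to license Lemma~\ref{lem:dualityDecoupling2}, to be where care is needed; the rest is a chain of Cauchy–Schwarz estimates and citations. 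The logarithmic factor enters exactly once, from the single application of \eqref{eq:iiSjn}, so the final constant $\widetilde C_N$ depends only on $c_2$, the projector bound, and the norms of the injections from Lemma~\ref{lem:MTinjections}, all $h$-independent.
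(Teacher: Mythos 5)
Your argument is correct in substance, but it takes a genuinely different route from the paper's own proof of this lemma. The paper never discretizes the test function globally: it keeps $v\in\mtsd$, splits the pairing with Lemma~\ref{lem:dualityDecoupling1} (licensed by the dual-side property $(\varphi_h)_{|S_j}\in\widetilde{H}^{-1/2}(S_j)$, implicit in $\varphi_{hj}\in\mathcal{S}^{0,-1}(S_{jh})$, rather than by the stated hypothesis on $\mathbb{Z}_h^{1/2}(\Gamma)$), bounds $\Vert v\Vert_{\mtsd}$ from below by the local $H^{1/2}(S_j)$-norms via Lemma~\ref{lem:MTinjections}, passes to per-screen suprema using the elementary inequality \eqref{eq:aux3} and the local projections $\tilde{Q}_h^j$ of \eqref{eq:Qjh}, and only then applies \eqref{eq:iiSjn}. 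You instead project the test function once and for all onto $\mathbb{Z}_h^{1/2}(\Gamma)$, invoke Lemma~\ref{lem:dualityDecoupling2} (thereby actually using the hypothesis as stated), and finish with an $\ell^2$ Cauchy--Schwarz over the screens; this mirrors the paper's proof of the companion Lemma~\ref{lem:invIneqDir}, where $\Pi_h$ plays exactly that role. What your route buys is cleaner bookkeeping and a proof aligned with the stated hypothesis; what it costs is the ingredient you flag yourself: a globally defined, $h$-uniformly bounded projector onto $\mathbb{Z}_h^{1/2}(\Gamma)$ that leaves $\ll\cdot,\varphi_h\gg$ unchanged for every $\varphi_h\in\mathbb{Z}_h^{-1/2}(\Gamma)$. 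Neither $\tilde{Q}_h^j$ (local, mapping into $\mathcal{S}^{1,0}(\check{S}_{jh})$) nor $\Pi_h$ of \eqref{eq:Pih} (mapping into the dual-side space) is literally that operator, so you should construct it, e.g.\ by defining $Q_h w\in\mathbb{Z}_h^{1/2}(\Gamma)$ through $\ll Q_h w,\varphi_h\gg=\ll w,\varphi_h\gg$ for all $\varphi_h\in\mathbb{Z}_h^{-1/2}(\Gamma)$ and deducing $h$-uniform stability from the discrete inf-sup of the duality pairing available under Assumption~\ref{asm:MeshAssumption}; with that in place your chain (decoupling, local $H^{1/2}(S_j)\times\widetilde{H}^{-1/2}(S_j)$ duality, one application of \eqref{eq:iiSjn}, then Lemma~\ref{lem:MTinjections} plus projector stability to absorb the test function) closes with the same single logarithmic factor and $h$-independent constant as the paper. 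Note finally that, exactly as in the paper's argument, your Cauchy--Schwarz step and \eqref{eq:iiSjn} still require $(\varphi_h)_{|S_j}\in\widetilde{H}^{-1/2}(S_j)$, an assumption not spelled out in the lemma statement but needed by both proofs.
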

\begin{proof}
By definition of dual norm and Lemma~\ref{lem:dualityDecoupling1}, we get
\begin{align}
\Vert \varphi_h \Vert_{\mtsn} &= \underset{v\in\mtsd\setminus\lbrace0\rbrace} 
{\sup} \dfrac{\vert \ll v,\varphi_h\gg\vert}{\Vert v \Vert_{\mtsd}}%nonumber\\
= \underset{v\in\mtsd\setminus\lbrace0\rbrace}{\sup}\dfrac{\vert \sum_{j=0}^m 
\langle v_j, \varphi_{hj} \rangle_{S_j} \vert}{\Vert v \Vert_{\mtsd}},
\label{eq:iibound1}
\end{align}
where we have set $v_j = v_{|S_j}$ and $\varphi_{hj} = (\varphi_h)_{|S_j}$. 
Then, let us consider the index set $\mathcal{J}$ of all the indices $0\leq k 
\leq m$ such that $v_k \neq 0$. Thus, we have that 
\begin{align}
\dfrac{\vert \sum_{j=0}^m \langle v_j, \varphi_{hj} \rangle_{S_j} \vert}{\Vert v
\Vert_{\mtsd}} = \dfrac{\vert \sum_{j\in\mathcal{J}} \langle v_j, \varphi_{hj} 
\rangle_{S_j} \vert}{\Vert v \Vert_{\mtsd}}.
\label{eq:aux1}
\end{align}

Next, we derive two inequalities that will help us proceed. First, using the 
embedding from Lemma~\ref{lem:MTinjections} and then Young's inequality 
$m$-times, we can obtain
\begin{align}
\Vert v \Vert^2_{\mtsd} \geq \sum_{j\in\mathcal{J}} \Vert v_j \Vert^2_{ 
H^{1/2}(S_j)} 
\geq \dfrac{1}{\vert \mathcal{J} \vert} \left( \sum_{j\in\mathcal{J}} \Vert v_j 
\Vert_{H^{1/2}(S_j)} \right)^2
\label{eq:aux2}
\end{align}
where $\vert \mathcal{J} \vert$ is the size of the index set $\mathcal{J}$. 
Second, we remark that for a sum $\sum_{i=1}^{m^*} a_i$ with all coefficients 
$a_i >0$ and $m^*\in \mathbb{N}$, we have that $\dfrac{1}{\sum_{i=1}^{m^*} a_i} \leq \dfrac{1}{a_k}$ for
all $k=1,\dots m^*$. Hence, 
\begin{align}
\dfrac{m^*}{\sum_{i=1}^{m^*} a_i} \leq \sum_{k=1}^{m^*} \dfrac{1}{a_k}
    \label{eq:aux3}
\end{align}

Plugging these in \eqref{eq:iibound1} gives
\begin{align}
\Vert \varphi_h \Vert_{\mtsn} &\overset{\eqref{eq:aux1}}{=} \underset{v\in\mtsd 
\setminus\lbrace0\rbrace}{\sup}\dfrac{\vert \sum_{j\in\mathcal{J}} \langle v_j, 
\varphi_{hj} \rangle_{S_j} \vert}{\Vert v \Vert_{\mtsd}} %\nonumber \\& 
\quad\quad\quad \overset{\eqref{eq:aux2}}{\leq} \vert\mathcal{J}\vert^{1/2} 
\underset{v\in\mtsd\setminus\lbrace0\rbrace}{\sup}\dfrac{\vert\sum_{j\in 
\mathcal{J}} \langle v_j,\varphi_{hj}\rangle_{S_j}\vert}{\sum_{k\in\mathcal{J}}
\Vert v_k \Vert_{H^{1/2} (S_k)}} \nonumber \\
\:&\overset{\eqref{eq:aux3}}{\leq} \dfrac{1}{\vert \mathcal{J}\vert^{1/2}} 
\underset{v\in\mtsd\setminus\lbrace0\rbrace}{\sup}\vert \sum_{j\in\mathcal{J}} 
\dfrac{\langle v_j, \varphi_{hj}\rangle_{S_j}}{\Vert v_j \Vert_{H^{1/2} (S_j)}} 
\vert %\nonumber \\&
\leq \dfrac{1}{\vert \mathcal{J}\vert^{1/2}} \underset{v\in\mtsd\setminus 
\lbrace0\rbrace}{\sup} \sum_{j\in\mathcal{J}} \dfrac{\vert\langle v_j,
\varphi_{hj}\rangle_{S_j}\vert}{\Vert v_j \Vert_{H^{1/2}(S_j)}} \nonumber \\
\:&\leq \dfrac{1}{\vert \mathcal{J}\vert^{1/2}} \sum_{j\in\mathcal{J}} \:
\underset{w_j\in H^{1/2}(S_j)\setminus\lbrace0\rbrace}{\sup} 
\dfrac{\vert\langle w_j, \varphi_{hj}\rangle_{S_j}\vert}{\Vert w_j 
\Vert_{H^{1/2}(S_j)}} %\nonumber \\&
= \dfrac{1}{\vert \mathcal{J}\vert^{1/2}} \sum_{j=0}^{m} \:
\underset{w_j\in H^{1/2}(S_j)\setminus\lbrace0\rbrace}{\sup} \dfrac{\vert 
\langle w_j, \varphi_{hj}\rangle_{S_j}\vert}{\Vert w_j \Vert_{H^{1/2} (S_j)}}.
\end{align}

Then, using $\tilde{Q}_h^j$ from \eqref{eq:Qjh} and its continuity 
\eqref{eq:contQjh}, we get
\begin{align}
\Vert \varphi_h \Vert_{\mtsn} &\leq \dfrac{1}{\vert \mathcal{J}\vert^{1/2}} 
\sum_{j=0}^{m} \: \underset{w_j\in H^{1/2}(S_j)\setminus\lbrace0\rbrace}{\sup} 
\dfrac{\vert\langle \tilde{Q}_h^j w_j, \varphi_{hj}\rangle_{S_j}\vert}{\Vert w_j
\Vert_{H^{1/2} (S_j)}} \nonumber 
\leq \dfrac{1}{\vert \mathcal{J}\vert^{1/2}} 
\sum_{j=0}^{m} \: c_{Qj} \underset{w_j\in H^{1/2}(S_j)\setminus\lbrace0\rbrace} 
{\sup} \dfrac{\vert\langle \tilde{Q}_h^j w_j, \varphi_{hj}\rangle_{S_j}\vert} 
{\Vert  \tilde{Q}_h^j w_j \Vert_{H^{1/2} (S_j)}} \nonumber\\
&\leq \dfrac{1}{\vert \mathcal{J}\vert^{1/2}} 
\sum_{j=0}^{m} \: c_{Qj} \underset{w_{hj}\in \mathcal{S}^{0,1}(S_{jh}) 
\setminus\lbrace0\rbrace}{\sup} \dfrac{\vert\langle w_{hj}, \varphi_{hj} 
\rangle_{S_j}\vert} {\Vert  w_{hj} \Vert_{H^{1/2} (S_j)}}. 
\label{eq:iibound2}
\end{align}

Using Cauchy-Schwarz, we obtain
\begin{align}
\Vert \varphi_h \Vert_{\mtsn} &\leq \dfrac{1}{\vert \mathcal{J}\vert^{1/2}} \sum_{j=0}^{m} \: c_{Qj} \underset{w_{hj}\in \mathcal{S}^{0,1}(S_{jh}) 
\setminus\lbrace0\rbrace}{\sup} \dfrac{\Vert  w_{hj} \Vert_{H^{1/2} (S_j)}
\Vert  \varphi_{hj} \Vert_{\widetilde{H}^{-1/2} (S_j)}} {\Vert  w_{hj} \Vert_{H^{1/2} (S_j)}}\leq \dfrac{1}{\vert \mathcal{J}\vert^{1/2}} \sum_{j=0}^{m} \: c_{Qj} \Vert  \varphi_{hj} \Vert_{\widetilde{H}^{-1/2} (S_j)}
\end{align}

Using the inverse inequality on simple screens \eqref{eq:iiSjn}, we can further 
bound this as
\begin{align}
\Vert \varphi_h \Vert_{\mtsn} &\leq c_N(1 + \vert \log h\vert) \sum_{j=0}^{m} \: \Vert  \varphi_{hj} \Vert_{ H^{-1/2} (S_j)},
\end{align}
with $c_N := \dfrac{c_2 \max(c_{Qj})}{\vert \mathcal{J}\vert^{1/2}}$.

For convenience, we can work the estimate a bit further 
\begin{align}
\Vert \varphi_h \Vert_{\mtsn}^2 &\leq c_N^2 (1 + \vert \log h\vert)^2
\left(\sum_{j=0}^{m}\: \Vert\varphi_{hj}\Vert_{H^{-1/2}(S_j)}\right)^2 
%\nonumber\\ &
\: \overset{\eqref{eq:aux2}}{\leq}\vert \mathcal{J}\vert c_N^2 (1 + \vert\log 
h\vert)^2 \sum_{j=0}^{m} \: \Vert  \varphi_{hj} \Vert_{H^{-1/2} (S_j)}^2,
\end{align}
which gives \eqref{eq:invIneqNeu} with $\tilde{C}_N = c_2\max(c_{Qj})$.
\end{proof}
%-----

%-----
\begin{lemma}[Inverse inequality in $\mtsd$]
\label{lem:invIneqDir}
Let $\Gamma$ be a multi-screen as in Assumption~\ref{ass:ms} and 
$\mathbb{Z}_h^{-1/2}(\Gamma)$ such that for all $\mu_h \in 
\mathbb{Z}_h^{-1/2}(\Gamma)$ it holds that $(\mu_h)_{|S_j} \in \widetilde{H 
}^{-1/2}(S_j)$. Then, we have that for all $u_h \in \mathbb{Z}_h^{1/2}(\Gamma)$
\begin{equation}
\Vert u_h \Vert_{\mtsd} \leq \widetilde{C}_D (1 + \vert \log h \vert ) \left( 
\sum_{j=0}^m \Vert u_h \Vert_{H^{1/2}(S_j)}^2 \right)^{1/2}, 
\end{equation}
with mesh size $h\leq1$, and $\widetilde{C}_D$>0 independent of $h$.
\end{lemma}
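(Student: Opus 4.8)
The plan is to mirror the proof of Lemma~\ref{lem:invIneqNeu} exactly, interchanging the roles of the primal Dirichlet data and the dual Neumann data. The starting point is the definition of the $\mtsd$-norm as a dual norm: since $\mtsn \cong (\mtsd)'$ via the pairing $\ll\cdot,\cdot\gg$, we write
\begin{align*}
\Vert u_h \Vert_{\mtsd} = \underset{\varphi\in\mtsn\setminus\lbrace0\rbrace}{\sup} \dfrac{\vert \ll u_h,\varphi\gg\vert}{\Vert \varphi \Vert_{\mtsn}}.
\end{align*}
Here the crucial point is that we may apply Lemma~\ref{lem:dualityDecoupling2} rather than Lemma~\ref{lem:dualityDecoupling1}: the hypothesis on $\mathbb{Z}_h^{1/2}(\Gamma)$ guarantees $(u_h)_{|S_j}\in\widetilde{H}^{1/2}(S_j)$ for every $j$, which is precisely the regularity needed for the decoupled pairing formula $\ll u_h,\varphi\gg_\Gamma = \sum_{j=0}^m\langle (u_h)_{|S_j},\varphi_{|S_j}\rangle_{S_j}$ to hold. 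This converts the global dual norm into a sum over the simple screens $S_j$.

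First I would restrict the supremum to the index set $\mathcal{J}$ of those $j$ with $\varphi_{|S_j}\neq 0$, use the embedding $\mtsn\xhookrightarrow{}\prod_j H^{-1/2}(S_j)$ from Lemma~\ref{lem:MTinjections} together with the same Young-type inequality $\Vert\varphi\Vert^2_{\mtsn}\geq\frac{1}{\vert\mathcal{J}\vert}\big(\sum_{j\in\mathcal{J}}\Vert\varphi_j\Vert_{H^{-1/2}(S_j)}\big)^2$, and then the elementary bound \eqref{eq:aux3} to split the supremum into a sum of local suprema
\begin{align*}
\underset{\psi_j\in H^{-1/2}(S_j)\setminus\lbrace0\rbrace}{\sup}\dfrac{\vert\langle (u_h)_{|S_j},\psi_j\rangle_{S_j}\vert}{\Vert\psi_j\Vert_{H^{-1/2}(S_j)}}.
\end{align*}
Next I would insert the local projection --- here the adjoint role is played by a generalized $L^2$-projection onto $\mathcal{S}^{0,-1}(S_{jh})$ tested against $\mathcal{S}^{1,0}(\check S_{jh})$, i.e.\ the transpose of $\tilde Q_h^j$ from \eqref{eq:Qjh} --- to replace the supremum over all of $H^{-1/2}(S_j)$ by a supremum over the discrete space, picking up the $h$-uniform continuity constants from the analogue of \eqref{eq:contQjh} (which follows from the same \cite{steinBook2} mesh conditions in Assumption~\ref{asm:MeshAssumption}). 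Cauchy--Schwarz on $S_j$ then yields $\Vert (u_h)_{|S_j}\Vert_{\widetilde{H}^{1/2}(S_j)}$, and the inverse inequality \eqref{eq:iiSjd} on the simple screen converts this to $(1+\vert\log h\vert)\Vert (u_h)_{|S_j}\Vert_{H^{1/2}(S_j)}$. Finally, summing over $j$, applying Cauchy--Schwarz once more to pass from the $\ell^1$-sum to the $\ell^2$-sum (absorbing a factor $\sqrt{m+1}$ into $\widetilde C_D$), and using $\vert\mathcal{J}\vert\leq m+1$ gives the claimed bound.

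The proof is almost purely a matter of transcription, so there is no deep obstacle; the one point requiring genuine care is the direction of the duality decoupling. One must verify that the discrete space $\mathbb{Z}_h^{-1/2}(\Gamma)$ over which the $\mtsd$-dual-norm supremum ultimately ranges is compatible with the hypothesis $(\mu_h)_{|S_j}\in\widetilde{H}^{-1/2}(S_j)$ stated in the lemma --- i.e.\ that one is entitled to test against such $\mu_h$ and that the projection step lands in the right local space. The bookkeeping of which space is ``primal'' and which is ``dual'' at each $S_j$ (and hence which of $\tilde Q_h^j$ or its transpose is invoked, and which of \eqref{eq:iiSjd}/\eqref{eq:iiSjn} applies) is where a sign-of-argument slip would creep in; everything else is identical to the proof of Lemma~\ref{lem:invIneqNeu}.
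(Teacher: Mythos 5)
Your plan is not a transcription of the paper's argument, and as written it does not go through under the lemma's hypotheses. The hypothesis of Lemma~\ref{lem:invIneqDir} is placed on the \emph{dual} discrete space: $(\mu_h)_{|S_j}\in\widetilde{H}^{-1/2}(S_j)$ for all $\mu_h\in\mathbb{Z}_h^{-1/2}(\Gamma)$. There is no assumption that $(u_h)_{|S_j}\in\widetilde{H}^{1/2}(S_j)$, so your key step --- invoking Lemma~\ref{lem:dualityDecoupling2} to decouple $\ll u_h,\varphi\gg$ for \emph{arbitrary} $\varphi\in\mtsn$ in the dual-norm supremum --- is unjustified; you have misread which space carries the tilde-restriction hypothesis. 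The same problem recurs at the end of your argument: applying the inverse inequality \eqref{eq:iiSjd} to $u_h$ requires $(u_h)_{|S_j}\in\mathcal{S}^{1,0}(S_{jh})\subset\widetilde{H}^{1/2}(S_j)$, again not granted (the lemma only assumes $u_h\in\mathbb{Z}_h^{1/2}(\Gamma)$, an abstract finite-dimensional subspace of $\mtsd$). Note also that you cannot repair the first step by restricting the supremum to those $\varphi$ whose restrictions lie in $\widetilde{H}^{-1/2}(S_j)$: restricting the supremum only gives a lower bound for it, while you need an upper bound for $\Vert u_h\Vert_{\mtsd}$.

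The paper's proof avoids both issues by a different device, which is exactly why the statement is asymmetric in its hypotheses. Instead of the continuous dual-norm supremum, it tests with the single discrete function $\varphi_h=\Pi_h u_h\in\mathbb{Z}_h^{-1/2}(\Gamma)$, where $\Pi_h$ is the projection \eqref{eq:Pih}, giving
\begin{equation*}
\Vert u_h\Vert_{\mtsd}=\dfrac{\vert\ll u_h,\Pi_h u_h\gg\vert}{\Vert u_h\Vert_{\mtsd}}
\leq c_\pi\,\dfrac{\vert\ll u_h,\Pi_h u_h\gg\vert}{\Vert \Pi_h u_h\Vert_{\mtsn}},
\end{equation*}
by the $h$-uniform continuity \eqref{eq:contPih} (this is where Assumption~\ref{asm:MeshAssumption} enters). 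Since $\varphi_h\in\mathbb{Z}_h^{-1/2}(\Gamma)$, the stated hypothesis gives $(\varphi_h)_{|S_j}\in\widetilde{H}^{-1/2}(S_j)$, so Lemma~\ref{lem:dualityDecoupling1} (not \ref{lem:dualityDecoupling2}) decouples the pairing; local Cauchy--Schwarz yields $\Vert u_{hj}\Vert_{H^{1/2}(S_j)}\Vert\varphi_{hj}\Vert_{\widetilde H^{-1/2}(S_j)}$, and the logarithmic factor is then produced by the \emph{Neumann-side} inverse inequality \eqref{eq:iiSjn} applied to $\varphi_{hj}$, never by \eqref{eq:iiSjd} applied to $u_h$. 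This is the essential idea missing from your proposal: shifting the entire discrete machinery (projection, tilde-space membership, inverse inequality) onto the dual test function so that only the plain $H^{1/2}(S_j)$-norms of $u_h$ survive. Your closing caveat about ``bookkeeping of which space is primal and which is dual'' is precisely where the argument breaks, and the fix is not bookkeeping but the $\Pi_h$-based construction above.
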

\begin{proof}
By definition and continuity of $\Pi_h$ (\emph{c.f.} \eqref{eq:contPih}), we 
have that
\begin{align}
\Vert u_h\Vert_{\mtsd} = \dfrac{\vert(u_h, u_h)_{\mtsd}\vert}{\Vert u_h\Vert_{ 
\mtsd}} = \dfrac{\vert\ll u_h,\Pi_h u_h\gg\vert}{\Vert u_h\Vert_{\mtsd}} \leq 
c_{\pi} \dfrac{\vert\ll u_h,\Pi_h u_h\gg\vert}{\Vert\Pi_h u_h\Vert_{\mtsn}}.
\label{eq:iiboundN}
\end{align}
For convenience, set $\varphi_h=\Pi_h u_h \in\mathbb{Z}_h^{-1/2}(\Gamma)$, and 
$\varphi_{hj} =(\varphi_h)_{|S_j}$. By assumption, we have that $\varphi_{hj}
\in \widetilde{H}^{ -1/2}(S_j)$. and thus we can apply 
Lemma~\ref{lem:dualityDecoupling1} and split the duality pairing. 

In the following, we will proceed in analogy to what we did in the proof of 
Lemma~\ref{lem:invIneqNeu}. For this, let us introduce the index set 
$\mathcal{J}$ of all the indices $0\leq k\leq m$ such that $\varphi_{hj}\neq0$. 
Moreover, similarly to \eqref{eq:aux2}, one can derive
\begin{align}
\Vert \mu \Vert^2_{\mtsn} \geq \sum_{j\in\mathcal{J}} \Vert \mu_j \Vert^2_{ 
H^{-1/2}(S_j)} \geq\dfrac{1}{\vert\mathcal{J}\vert} \left(\sum_{j\in\mathcal{J}}
\Vert \mu_j \Vert_{H^{-1/2}(S_j)} \right)^2.
\label{eq:aux4}
\end{align}

With this, we can bound \eqref{eq:iiboundN} further as follows
\begin{align}
\Vert u_h \Vert_{\mtsd} &\leq c_{\pi} \vert\mathcal{J}\vert^{1/2} 
\dfrac{\vert \sum_{j\in\mathcal{J}} \langle u_{hj},\varphi_{hj}\rangle_{S_j} 
\vert}{\sum_{k\in\mathcal{J}}\Vert\varphi_{hk}\Vert_{H^{-1/2}(S_k)}}%\nonumber\\
\overset{\eqref{eq:aux3}}{\leq} \dfrac{c_{\pi}}{\vert\mathcal{J}\vert^{1/2}} 
\sum_{j\in\mathcal{J}} \dfrac{\vert \langle u_{hj},\varphi_{hj}\rangle_{S_j} 
\vert}{\Vert\varphi_{hj}\Vert_{H^{-1/2}(S_j)}}\nonumber\\&\leq 
\dfrac{c_{\pi}}{\vert\mathcal{J} \vert^{1/2}} \sum_{j\in\mathcal{J}} \dfrac{ 
\Vert u_{hj}\Vert_{H^{1/2}(S_j)}\Vert\varphi_{hj}\Vert_{\widetilde{H}^{-1/2}
(S_j)}}{\Vert\varphi_{hj}\Vert_{H^{-1/2}(S_j)}}.
\end{align}
Now, we use the inverse inequality \eqref{eq:iiSjn} on $S_j$ and get 
\begin{align}
\Vert u_h \Vert_{\mtsd} \leq \dfrac{c_{\pi}}{\vert\mathcal{J} \vert^{1/2}}
c_2 (1+\log h) \sum_{j\in\mathcal{J}} \Vert u_{hj}\Vert_{H^{1/2}(S_j)}\nonumber
\leq \dfrac{c_{\pi}}{\vert\mathcal{J} \vert^{1/2}}
c_2 (1+\log h) \sum_{j=0}^m \Vert u_{hj}\Vert_{H^{1/2}(S_j)}.
\end{align}
Finally, just as in Lemma~\ref{lem:invIneqNeu}, we point out that from this one 
can derive
\begin{align}
\Vert u_h \Vert_{\mtsd} &\leq \tilde{C}_D (1+\log h) \left(\sum_{j=0}^m \Vert 
u_{hj}\Vert_{H^{1/2}(S_j)}^2\right)^{1/2}.
\end{align}
\end{proof}
%-----

%%%%%%%%%%%%%%%%%%%%%%%%%%%%%%%%%%%%%%%%%%%%%%%%%%%%%%%%%%%
\subsection{Norm equivalences}
\label{app:NormEquiv}

Recall that 
\begin{equation*}
    \Vert u \Vert_{\jspd} = \inf_{x \in \stsd} \Vert u + x \Vert_{\mtsd},
\end{equation*}
and that $\mathcal{S}^{1,0}(\meshsymb_h)$ is the space spanned by piecewise 
linear ``continuous'' functions on (the virtual mesh) $\meshsymb_h$. We define 
the norm
\begin{equation}
    \Vert u \Vert_{\jspd,D} := \inf_{x_h \in \stsd\cap
    \mathcal{S}^{1,0}(\meshsymb_h)} \Vert u + x_h \Vert_{\mtsd},
\end{equation}
and study its relation with the continuous jump norm.

\begin{lemma}
\label{lem:normEquivalenceD}
 Assume that there exists an operator $\OR_h^+ \, : \, \mtsd \to \mathcal{S}^{1,0}(\meshsymb_h)$ such that
\begin{itemize}
 \item[(i)] $\OR_h^+$ is a h-uniformly bounded projection,
 \item[(ii)] $\OR_h^+ ( \stsd ) \subseteq  \stsd\cap\mathcal{S}^{1,0}(\meshsymb_h). $
\end{itemize}

Then $ \Vert \cdot \Vert_{\jspd} $ and $ \Vert \cdot \Vert_{\jspd,D} $ are equivalent norms
\end{lemma}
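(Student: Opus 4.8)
The plan is to show the two inequalities $\Vert u \Vert_{\jspd} \le \Vert u \Vert_{\jspd,D}$ and $\Vert u \Vert_{\jspd,D} \le C \Vert u \Vert_{\jspd}$ separately. The first one is immediate: the infimum defining $\Vert u \Vert_{\jspd,D}$ is taken over the smaller set $\stsd \cap \mathcal{S}^{1,0}(\meshsymb_h) \subseteq \stsd$, so it can only be larger than (or equal to) the infimum over all of $\stsd$. Hence $\Vert u \Vert_{\jspd} \le \Vert u \Vert_{\jspd,D}$ with constant one, and no hypothesis on $\OR_h^+$ is needed here.

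For the nontrivial direction, I would use the bounded projection $\OR_h^+$ to transport a near-optimal single-trace correction from the continuous space into the discrete space. Fix $u \in \mtsd$ and let $x \in \stsd$ be arbitrary (eventually near-optimal for $\Vert u \Vert_{\jspd}$). Here I would additionally want $u$ itself to live in $\mathcal{S}^{1,0}(\meshsymb_h)$, or more precisely I would argue on $u_h \in \besmtp$, since $\OR_h^+$ fixes elements of $\mathcal{S}^{1,0}(\meshsymb_h)$ — so $\OR_h^+ u = u$ when $u = u_h$. Then write
\begin{equation*}
\Vert u_h + \OR_h^+ x \Vert_{\mtsd} = \Vert \OR_h^+(u_h + x) \Vert_{\mtsd} \le \Vert \OR_h^+ \Vert \, \Vert u_h + x \Vert_{\mtsd}.
\end{equation*}
By hypothesis (ii), $\OR_h^+ x \in \stsd \cap \mathcal{S}^{1,0}(\meshsymb_h)$, so $\OR_h^+ x$ is an admissible competitor in the infimum defining $\Vert u_h \Vert_{\jspd,D}$. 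Taking the infimum over $x \in \stsd$ on the right-hand side and using the $h$-uniform bound on $\Vert \OR_h^+ \Vert$ gives
\begin{equation*}
\Vert u_h \Vert_{\jspd,D} \le \Vert \OR_h^+ \Vert \, \inf_{x \in \stsd} \Vert u_h + x \Vert_{\mtsd} = \Vert \OR_h^+ \Vert \, \Vert u_h \Vert_{\jspd},
\end{equation*}
which is the desired equivalence with $h$-independent constant $\Vert \OR_h^+ \Vert$.

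The main subtlety — and the step I would be most careful about — is the role of hypothesis (i) being a \emph{projection} rather than merely a bounded operator: the identity $\OR_h^+ u_h = u_h$ for $u_h \in \mathcal{S}^{1,0}(\meshsymb_h)$ is exactly what lets the argument close, since otherwise $\OR_h^+(u_h+x)$ would not recover $u_h$ and we would only control a perturbed quantity. I would also be explicit about the fact that the lemma is really a statement about the norm $\Vert\cdot\Vert_{\jspd,D}$ restricted to (or induced on) $\besmtp$, consistent with its later use in the condition-number estimates; if instead the statement is meant for general $u\in\jspd$, one needs $\OR_h^+$ to be defined and bounded on all of $\mtsd$ (as assumed) and to note that $\Vert u_h\Vert_{\jspd,D}$ is only finite when $u_h$ differs from an element of $\mathcal{S}^{1,0}(\meshsymb_h)$ by a single-trace function — which is guaranteed precisely for $u_h\in\besmtp$. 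No heavy machinery is required; the whole proof is a two-line sandwich once the projection property is invoked correctly.
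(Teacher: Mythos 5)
Your proposal is correct and follows essentially the same route as the paper: the trivial inequality comes from the discrete infimum being taken over a subset of $\stsd$, and the reverse bound uses property (ii) to make $\OR_h^+ x$ an admissible competitor together with the projection identity $\OR_h^+(u_h+x)=u_h+\OR_h^+x$ and the $h$-uniform bound on $\Vert\OR_h^+\Vert$. Your explicit remark that the equivalence is to be read on functions in (the range of $\OR_h^+$, i.e.) the discrete multi-trace space matches the paper's intent, where the proof is stated "for all $v_h \in \jspd\cap\mathcal{S}^{1,0}(\meshsymb_h)$".
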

\begin{proof}
 By definition, we have
\begin{equation}
    \Vert u \Vert_{\jspd}  \leq \Vert u \Vert_{\jspd,D} 
    \label{eq:normIneqD1}
\end{equation}

Choose $u_h = \OR_h^+ u$ where $u\in \jspd \subset \mtsd$.
Then we have that 
\begin{equation*}
    \Vert u_h \Vert_{\jspd,D} = \inf_{x_h \in \stsd\cap
    \mathcal{S}^{1,0}(\meshsymb_h)} \Vert u_h + x_h \Vert_{\mtsd}.
\end{equation*}
Note that, by surjectivity of $\OR_h^+$ and property (ii), there exists an $x \in \stsd$ such that 
$x_h = \OR_h^+ x$. This allows us to write
\begin{equation*}
\Vert u_h \Vert_{\jspd,D} = \inf_{x_h \in \stsd} \Vert \OR_h^+(u+x) \Vert_{\mtsd}.
\end{equation*}
Since $\OR_h^+$ is  continuous, we further get
\begin{equation}
\Vert u_h \Vert_{\jspd,D} \leq \Vert \OR_h^+ \Vert\inf_{x_h \in \stsd} 
\Vert u+x \Vert_{\mtsd} = \Vert \OR_h^+ \Vert \Vert u_h \Vert_{\jspd}
\label{eq:normEstimateD1}
\end{equation}
for all $v_h \in \jspd\cap \mathcal{S}^{1,0}(\meshsymb_h)$.

From this, we conclude that the two norms are equivalent. 
Furthermore, the fact that $\OR_h^+$ is $h$-uniformly bounded guarantees that the 
related constants are $h$-independent.
\end{proof}

Similarly, one shows that for the norm
\begin{equation}
    \Vert \mu \Vert_{\jspn,D} := \inf_{\xi_h \in \stsn\cap
    \mathcal{S}^{0,-1}(\meshsymb_h)} \Vert \mu + \xi_h \Vert_{\mtsn},
\end{equation}
we have 
\begin{lemma}
\label{lem:normEquivalenceN}
 Assume that there exists an operator $\OR_h^- \, : \, \mtsn \to \mathcal{S}^{0,-1}(\meshsymb_h)$ such that
\begin{itemize}
 \item[(i)] $\OR_h^-$ is a h-uniformly bounded projection,
 \item[(ii)] $\OR_h^- ( \stsn ) \subseteq  \stsn\cap\mathcal{S}^{0,-1}(\meshsymb_h). $
\end{itemize}

Then, there exists $C_N>0$ independent of $h$ such that
\begin{equation}
    \Vert \varphi_h \Vert_{\jspn}  \leq \Vert \varphi_h \Vert_{\jspn,D} \leq 
c_N \Vert \varphi_h \Vert_{\jspn}
\label{eq:normEquivalenceN}
\end{equation}
for all $\varphi_h \in \jspn\cap \mathcal{S}^{0,-1}(\meshsymb_h)$.
\end{lemma}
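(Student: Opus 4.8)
The plan is to mirror the argument of Lemma~\ref{lem:normEquivalenceD} in the dual pair of spaces, with $\OR_h^-$ playing the role of $\OR_h^+$. First I would record the left inequality for free: since $\stsn \cap \mathcal{S}^{0,-1}(\meshsymb_h) \subseteq \stsn$, the infimum defining $\Vert \cdot \Vert_{\jspn,D}$ is taken over a smaller set than the one defining $\Vert \cdot \Vert_{\jspn}$, so $\Vert \varphi_h \Vert_{\jspn} \leq \Vert \varphi_h \Vert_{\jspn,D}$ for every $\varphi_h \in \mtsn$.

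For the nontrivial estimate I would fix $\varphi_h \in \jspn \cap \mathcal{S}^{0,-1}(\meshsymb_h)$. Because $\OR_h^-$ is a projection onto $\mathcal{S}^{0,-1}(\meshsymb_h)$, we have $\OR_h^- \varphi_h = \varphi_h$. Then, for an arbitrary $\xi \in \stsn$, hypothesis (ii) guarantees that $\OR_h^- \xi$ lies in $\stsn \cap \mathcal{S}^{0,-1}(\meshsymb_h)$, hence is an admissible competitor in the discrete infimum; using linearity and boundedness of $\OR_h^-$ I would estimate
\[ \Vert \varphi_h \Vert_{\jspn,D} \leq \Vert \varphi_h + \OR_h^-\xi \Vert_{\mtsn} = \Vert \OR_h^-(\varphi_h + \xi) \Vert_{\mtsn} \leq \Vert \OR_h^- \Vert\, \Vert \varphi_h + \xi \Vert_{\mtsn}. \]
Taking the infimum over $\xi \in \stsn$ on the right yields $\Vert \varphi_h \Vert_{\jspn,D} \leq \Vert \OR_h^- \Vert\, \Vert \varphi_h \Vert_{\jspn}$, and the $h$-uniform boundedness of $\OR_h^-$ lets me set $c_N := \sup_{h \in \mathcal{H}} \Vert \OR_h^- \Vert < \infty$, giving \eqref{eq:normEquivalenceN}.

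I do not expect a genuine obstacle here; the proof is essentially bookkeeping. The two points requiring care are: using that $\OR_h^-$ is a true projection (not merely bounded) to recover $\varphi_h = \OR_h^-\varphi_h$, which is why one must restrict to $\varphi_h$ already lying in $\mathcal{S}^{0,-1}(\meshsymb_h)$; and invoking property (ii) to be sure the competitor $\OR_h^-\xi$ remains simultaneously single-trace and discrete. The real difficulty — exactly as in the Dirichlet case — is pushed into the hypothesis itself: constructing a uniformly bounded projection $\OR_h^- : \mtsn \to \mathcal{S}^{0,-1}(\meshsymb_h)$ with property (ii) is the substantive question, and it lies outside the scope of this lemma (compare the remark after Theorem~\ref{thm:condCaldPrecV}).
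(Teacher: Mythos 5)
Your proof is correct and follows essentially the same route the paper intends: the paper proves only the Dirichlet analogue (Lemma~\ref{lem:normEquivalenceD}) and states that the Neumann case follows similarly, and your argument is exactly that adaptation, using the projection property to write $\varphi_h = \OR_h^-\varphi_h$ and hypothesis (ii) to make $\OR_h^-\xi$ an admissible competitor. The only (cosmetic) difference is that you bound the discrete infimum directly by the competitors $\OR_h^-\xi$ instead of invoking surjectivity of $\OR_h^-$ onto $\stsn\cap\mathcal{S}^{0,-1}(\meshsymb_h)$ to rewrite the infimum, which if anything streamlines the paper's step.
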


%%%%%%%%%%%%%%%%%%%%%%%%%%%%%%%%%%%%%%%%%%%%%%%%%%%%%%%%%%%
\subsection{Condition number estimates for quotient space discretisations}
\label{app:CondNumEstimates}

Following the policy of operator preconditioning, we need to bound the spectral
condition number $\kappa_{sp}(\VM^{-1} \VB_{h}\VM^{-T} \VA_{h})$ for the pairs 
$(\VA_h, \VB_h)=(\VW_{\kappa,h}, \VB^{\OW}_{\kappa,h})$, and $(\VA_h, \VB_h) 
=(\VV_{\kappa,h}, \VB^{\OV}_{\kappa,h})$. In other words, we are assuming that we 
have that
\begin{itemize}
    \item the Galerkin matrix $\VB_h$ arises from a continuous sesquilinear form $\bb$ 
    that satisfies a discrete inf-sup condition on the whole space $\besmtd$;
    \item the Galerkin matrix $\VM_h$ arises from a continuous sesquilinear form $\bm$ 
    that satisfies a discrete inf-sup condition on the whole space $\besmtp$;
    \item the Galerkin matrix $\VA_h$ has a non-trivial nullspace $N_h(\Gamma) 
    := \ker \OA_h$. Moreover, 
    its corresponding sesquilinear form $\ba$ satisfies a discrete inf-sup 
    condition only on $\besmtp/N_h(\Gamma)$.
\end{itemize}

Since $\VA_h$ is either $\VW_{\kappa,h}$ or $\VV_{\kappa,h}$, we will directly 
use that $N_h(\Gamma) = X_h(\Gamma)$ to avoid unnecessary extra notation.

As usual, this entails bounding $\lambda_{\max}
:= \lambda_{\max}(\VM^{-1}\VB_{h}\VM^{-T}\VA_{h})$ from above and $\lambda_{\min} := 
\lambda_{\min}(\VM^{-1}\VB_{h}\VM^{-T}\VA_{h})$ from below. In order to write these bounds, 
we need to introduce some notation first.

Let $\OA_h \,:\, \besmtp\to\besmtp^\prime$, $\OB_h \,:\, \besmtd\to\besmtd^\prime$ and 
$\OM_h \,:\, \besmtp\to\besmtd^\prime$ be the bounded linear operators associated to 
the sesquilinear forms $\ba$, $\bb$ and $\bm$, respectively.

For $\lambda_{\max}$ we proceed in the classical way and arrive to 
\begin{align}
\lambda_{\max} &\leq 
\Vert \OM^{-1} \Vert^2\Vert \OB \Vert \Vert \OA 
\nonumber \\
&= \alpha_{\OM}^2
\Vert \OB \Vert \Vert \OA \Vert, 
\label{eq:boundLmax}
\end{align}
where 
\begin{align*}\small
\alpha_{\OM} := \inf_{v_h\in \besmtp\setminus\{0\}} \dfrac{\Vert\OM_h v_h\Vert_{\smtd^\prime}}{\Vert v_h \Vert_{\smtp}}.
\end{align*}

For $\lambda_{\min}$ we have to take a slightly different approach since we need to restrict $\OA_h$ 
to the space where its corresponding bilinear form $\ba$ satisfies a discrete inf-sup 
condition. Moreover, we need to establish when the discrete inf-sup condition will 
bound the smallest eigenvalue. We study this in the next Lemma.

\begin{lemma}[Discrete inf-sup constant in the quotient space norm]
Let $\ba$ be a continuous bilinear form on $\smtp\times \smtp$. Let $X(\Gamma)
\subseteq \smtp$ be both the left and right nullspace of $\ba$. 

 Let $\besmtp$ be a finite dimensional subspace of $\smtp$ and $\OA_h \,:\, 
 \besmtp\to\besmtp^\prime$ the bounded linear operator associated to $\ba$. 
 We assume that $\besmtp$ is nullspace conforming to $\ba$ in the sense that 
 $X_h(\Gamma) := \ker \OA_h = \ker \OA'_h$ is a linear subspace of $X(\Gamma)\cap 
 \besmtp\subseteq X(\Gamma)$. 

If $\ba$ satisfies a discrete inf-sup condition in the quotient space $\besmtp/X_h(\Gamma)
\times \besmtp/X_h(\Gamma)$ with constant $\alpha_{\ba} > 0$ and if the norms on $\smtp/X(\Gamma)$ and $\smtp/X_h(\Gamma)$ are equivalent, then 
\begin{equation}
    \sup_{v_h \in \besmtp\setminus \{0\}} \frac{\ba(u_h,v_h)}{ \|v_h\|_{\smtp}} \geq \tilde{\alpha}_{\ba} \|u_h\|_{\smtp/X(\Gamma)}.
    \label{eq:inf-supDiscreteQS}
\end{equation}
\end{lemma}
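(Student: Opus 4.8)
The plan is to translate the statement about the quotient-space inf-sup condition into a statement on a concrete complement of $X_h(\Gamma)$ inside $\besmtp$, run the usual inf-sup argument there, and then transfer back to the quotient norm $\|\cdot\|_{\smtp/X(\Gamma)}$ using the assumed norm equivalence. Concretely: first I would fix $u_h \in \besmtp$ and let $[u_h] \in \besmtp/X_h(\Gamma)$ denote its class. Since the discrete inf-sup condition is stated on $\besmtp/X_h(\Gamma)\times\besmtp/X_h(\Gamma)$ with constant $\alpha_{\ba}$, and since $X_h(\Gamma)$ is both the left and right nullspace of $\OA_h$ (so $\ba$ descends to a well-defined form on the quotient), there is a $v_h \in \besmtp$ with
\begin{equation*}
\frac{|\ba(u_h,v_h)|}{\|[v_h]\|_{\besmtp/X_h(\Gamma)}} \geq \alpha_{\ba}\,\|[u_h]\|_{\besmtp/X_h(\Gamma)}.
\end{equation*}
The key point making this legitimate is that $\ba(u_h,v_h)$ depends only on the classes $[u_h]$ and $[v_h]$, because $X_h(\Gamma)$ annihilates $\ba$ on both sides.

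Next I would pass from the quotient norm in the denominator to the full norm $\|v_h\|_{\smtp}$. Here I would pick the representative $v_h$ to (nearly) realize the infimum defining $\|[v_h]\|_{\besmtp/X_h(\Gamma)} = \inf_{x_h \in X_h(\Gamma)} \|v_h + x_h\|_{\smtp}$, i.e. replace $v_h$ by $v_h + x_h$ for a suitable $x_h \in X_h(\Gamma)$ — this does not change $\ba(u_h, v_h)$ — so that $\|v_h\|_{\smtp} \leq (1+\varepsilon)\,\|[v_h]\|_{\besmtp/X_h(\Gamma)}$, or simply $\|v_h\|_{\smtp} = \|[v_h]\|_{\besmtp/X_h(\Gamma)}$ if the infimum is attained (it is, since $X_h(\Gamma)$ is finite-dimensional and hence closed). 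This yields
\begin{equation*}
\sup_{w_h \in \besmtp\setminus\{0\}} \frac{|\ba(u_h,w_h)|}{\|w_h\|_{\smtp}} \;\geq\; \frac{|\ba(u_h,v_h)|}{\|v_h\|_{\smtp}} \;\geq\; \alpha_{\ba}\,\|[u_h]\|_{\besmtp/X_h(\Gamma)}.
\end{equation*}
Finally, I would invoke the hypothesis that the norms on $\smtp/X(\Gamma)$ and $\smtp/X_h(\Gamma)$ are equivalent: since $X_h(\Gamma) \subseteq X(\Gamma)$ we always have $\|u_h\|_{\smtp/X(\Gamma)} \leq \|[u_h]\|_{\smtp/X_h(\Gamma)}$, so there is a constant $c>0$ with $\|[u_h]\|_{\besmtp/X_h(\Gamma)} \geq c\,\|u_h\|_{\smtp/X(\Gamma)}$ (this is precisely the content of Lemmas~\ref{lem:normEquivalenceD} and~\ref{lem:normEquivalenceN}, where $\OR_h^+$, resp.\ $\OR_h^-$, provides the reverse bound). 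Setting $\tilde{\alpha}_{\ba} := c\,\alpha_{\ba}$ gives \eqref{eq:inf-supDiscreteQS}.

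The main obstacle — and the step that deserves the most care — is the second one: making sure that the representative of $[v_h]$ chosen to pass from $\|[v_h]\|_{\besmtp/X_h(\Gamma)}$ to $\|v_h\|_{\smtp}$ lies in $\besmtp$ (so that it is admissible in the supremum over $\besmtp$) and that $\ba(u_h,\cdot)$ is genuinely invariant under adding elements of $X_h(\Gamma)$. The first is automatic because $X_h(\Gamma) \subseteq \besmtp$, so $v_h + x_h \in \besmtp$; the second is exactly the assumption $\ker\OA_h = \ker\OA'_h = X_h(\Gamma)$, i.e.\ $X_h(\Gamma)$ is the two-sided nullspace, which guarantees $\ba(u_h, x_h) = 0$ for all $x_h \in X_h(\Gamma)$. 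One subtlety worth flagging explicitly is that the quotient norm on $\besmtp/X_h(\Gamma)$ used in the discrete inf-sup hypothesis must be the one induced by $\|\cdot\|_{\smtp}$ via the infimum over $X_h(\Gamma)$ (rather than over all of $X(\Gamma)$); keeping these two quotient norms distinct throughout, and only identifying them at the very end via the equivalence hypothesis, is what makes the bookkeeping clean.
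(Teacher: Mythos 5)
Your proof is correct and follows essentially the same route as the paper's: picking the minimal-norm representative of the test class modulo $X_h(\Gamma)$ is exactly the paper's orthogonal projection $\mathbb{Q}_h$ onto $X_h(\Gamma)^\perp$ (identifying $X_h(\Gamma)^\perp$ with $\besmtp/X_h(\Gamma)$), after which both arguments invoke the quotient-space inf-sup condition and the comparison between the $\smtp/X_h(\Gamma)$ and $\smtp/X(\Gamma)$ norms to obtain $\tilde{\alpha}_{\ba}=c\,\alpha_{\ba}$. The only cosmetic difference is where the norm comparison is applied: you use it in the solution slot (where, under your literal reading of the hypothesis, only the trivial direction coming from $X_h(\Gamma)\subseteq X(\Gamma)$ is needed), while the paper applies it in the test-function denominator -- a bookkeeping nuance you correctly flag, not a gap.
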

\begin{proof}
We note that due to the kernel, the supremum is not attained at $v_h \in X_h(\Gamma)$, 
and thus
\begin{align}
    \sup_{v_h \in \besmtp\setminus \{0\}} \frac{\ba(u_h,v_h)}{ \|v_h\|_{\smtp}} \geq \sup_{\tilde{v}_h \in X_h(\Gamma)^\perp\setminus \{0\}} \frac{\ba(u_h,\tilde{v}_h)}{ \|\tilde{v}_h\|_{\smtp}} 
    \label{eq:inf-supDiscreteQS2}
\end{align}
Let $\mathbb{Q}_h: \besmtp \to X_h(\Gamma)^\perp$ be an orthogonal projection with 
respect to the $\smtp$-inner product. Using that $\tilde{v}_h = \mathbb{Q}_h \tilde{v}_h$, and the identification of $X_h(\Gamma)^\perp$ with $\besmtp/X_h(\Gamma)$, we have that by property of quotient spaces \cite[Eq.~(4)]{CWHM17}
\begin{align}
    \sup_{\tilde{v}_h \in X_h(\Gamma)^\perp\setminus \{0\}} \frac{\ba(u_h,\tilde{v}_h)}{ \|\tilde{v}_h\|_{\smtp}} &= 
        \sup_{\tilde{v}_h \in X_h(\Gamma)^\perp\setminus \{0\}} \frac{\ba(u_h,\tilde{v}_h)}{ \|\mathbb{Q}_h \tilde{v}_h\|_{\smtp}} 
        =
            \sup_{\tilde{v}_h \in \besmtp/X_h(\Gamma)\setminus \{0\}} \frac{\ba(u_h,\tilde{v}_h)}{ \|\tilde{v}_h\|_{\smtp/X_h(\Gamma)}}. 
    \label{eq:inf-supDiscreteQS3}
\end{align}
Finally, by norm equivalence, this becomes the discrete inf-sup condition, i.e.
\begin{align}
    \sup_{v_h \in \besmtp\setminus \{0\}} \frac{\ba(u_h,v_h)}{ \|v_h\|_{\smtp}} &\geq       c \,  \sup_{\tilde{v}_h \in \besmtp/X_h(\Gamma)\setminus \{0\}} \frac{\ba(u_h,\tilde{v}_h)}{ \|\tilde{v}_h\|_{\smtp/X(\Gamma)}} \\
    &\geq       c \alpha_{\ba} |u_h\|_{\smtp/X(\Gamma)},    
    \label{eq:inf-supDiscreteQS4}
\end{align}

and therefore the required result follows with $\tilde{\alpha}_{\ba} = c \alpha_{\ba}$, 
where $c$ is the constant from the norm-equivalence.
\end{proof}

Using all the above, we can bound $\lambda_{\min}$ as follows
\begin{align}
\lambda_{\min} &= \inf_{u_h\in \besmtp/X_h(\Gamma)\setminus\{0\}} 
\dfrac{\Vert\OM_h^{-1}\OB_{h}\OM^{-*}_h{\OA_h} u_h\Vert_{
\smtp^\prime}}{\Vert u_h \Vert_{\smtp}} 
\geq \alpha_{\OM^{-1}}^2 \alpha_{\OB} \alpha_{\ba},
\label{eq:boundLmin}
\end{align}
with 
\begin{align*}
\alpha_{\OM^{-1}} := \inf_{\mu_h\in \besmtd^\prime\setminus\{0\}} \dfrac{\Vert\OM_h^{-1}\mu_h\Vert_{\smtp}}{\Vert \mu_h \Vert_{\smtd^\prime}} = 
\Vert \OM \Vert^{-1}, 
\qquad \qquad
\alpha_{\OB}:= \inf_{\xi_h\in \besmtd\setminus\{0\}} \dfrac{\Vert\OB_{h}\xi_h\Vert_{\smtd^\prime}}{\Vert \xi_h \Vert_{\smtd}}.
\end{align*}

Finally, we combine \eqref{eq:boundLmax} and \eqref{eq:boundLmin} and get
\begin{align}
    \kappa_{sp}(\VM^{-1} \VB_{h}\VM^{-T} \VA_{h}) \leq \dfrac{\alpha_{\OM}^2
\Vert \OB \Vert \Vert \OA \Vert}{\Vert \OM \Vert^2 \alpha_{\OB} \tilde{\alpha}_{\ba}}.
\end{align}

\begin{remark}
Note that the nullspace conformity requirement forces us to use meshes that agree on the \emph{front} and \emph{back} of the structure. In practice this does not pose a major limitation because in a typical usage scenario the simple screens $\partial \Omega_j \cap \Gamma$ are built by fusing together two or more unique meshes for the interfaces $\partial \Omega_i \cup \partial \Omega_j$. The interface meshes are used both as front and back and thus necessarily agree.
\end{remark}

\begin{remark}
For the discrete quotient norm to be bounded from below by the continuous quotient norm, it suffices that $\besstp \subseteq X(\Gamma) \cap \besmtp$; equality is not required. This opens the door to reduction schemes for the multi-trace space variational formulation. A reduction scheme is a choice $\besmtp^\circ \subseteq \besmtp \subseteq \smtp$ with corresponding discrete left/right nullspace $\besstp^\circ$ such that (i) $\besstp^\circ \subseteq X(\Gamma)$ and (ii) $\besmtp^\circ / \besstp^\circ = \besmtp / \besstp$. Such a choice leads on one hand to approximations in the jump space of equal quality and on the other hand does not preclude the construction of efficient operator preconditioners.
\end{remark}

\end{document}